\theoremstyle{plain} 
\newtheorem{thm}{Theorem}[section]
\newtheorem{lem}[thm]{Lemma} 
\newtheorem{prop}[thm]{Proposition} 
\newtheorem{cor}[thm]{Corollary} 
\newtheorem*{prob}{Problem A}
\theoremstyle{definition} 
\newtheorem{defn}[thm]{Definition}
\newtheorem{rem}[thm]{Remark} 
\newtheorem{ex}[thm]{Example}
\newcommand{\A}{\mathcal{A}}
\newcommand{\Q}{\mathbb{Q}}
\newcommand{\ZN}{\mathbb{Z}}
\newcommand{\RN}{\mathbb{R}}
\newcommand{\PB}{\mathbb{P}}
\newcommand{\CO}{\mathcal{O}}
\newcommand{\CN}{\mathbb{C}}
\newcommand{\CC}{{\rm C}}
\newcommand{\DC}{\mathrm{D}^{{\rm b}}}
\newcommand{\Hom}{\mathrm{Hom}}
\newcommand{\Ext}{\mathrm{Ext}}
\newcommand{\ext}{\mathrm{ext}}
\newcommand{\ch}{\mathrm{ch}}
\newcommand{\Coh}{\mathrm{Coh}}
\newcommand{\Stab}{\mathrm{Stab}}
\newcommand{\RCH}{R\mathcal{H}om}
\newcommand{\CH}{\mathcal{H}}
\newcommand{\Hilb}{\mathrm{Hilb}}
\newcommand{\MM}{\mathcal{M}}
\numberwithin{equation}{section}
\begin{document}

\title{Quintic genus 2 curves, stable pairs and wall-crossing}

\author{Shihao Ma \and Song Yang}
\address{Center for Applied Mathematics and KL-AAGDM, Tianjin University, Weijin Road 92, Tianjin 300072, P. R. China}%
\email{shma@tju.edu.cn, syangmath@tju.edu.cn}%

\begin{abstract}
This paper studies wall crossings in Bridgeland stability for the moduli space of Pandharipande--Thomas stable pairs associated with quintic genus 2 curves in the complex projective three-space. We provide a complete list of irreducible components of the moduli space, together with a birational description of each component. As an application, we recover the geometry of the Hilbert scheme for quintic genus 2 curves, for which no previous results are known.
\end{abstract}

\date{\today}

\subjclass[2020]{Primary  14D20, 14C05; Secondary 14H10, 14F08}
\keywords{Quintic genus 2 curve, stable pair, Hilbert scheme, stability condition}

\maketitle

%===================================================================== 

\section{Introduction}
The theory of curve-counting on a smooth complex projective $3$-fold $X$, introduced by Pandharipande--Thomas \cite{PT09} via the moduli spaces of stable pairs, has been extensively studied over the two decades.
A stable pair $(F,s)$ on $X$ consists of a $1$-dimensional pure sheaf $F$ and a non-trivial morphism $s:\CO_{X} \rightarrow F$ with $0$-dimensional cokernel. 
The moduli space $P_{n}(X,\beta)$ parameterizes such pairs with the Euler characteristic and the homology class of the support of $F$, $\chi(F)=n\in \ZN$ and $[F]=\beta\in H_{2}(X,\ZN)$.
The resulting moduli space provides a sheaf-theoretic compactification of the space of smooth curves in $X$ and carries an algebraic virtual class defining the Pandharipande--Thomas invariants. 
However, the global geometry of this fine projective moduli space remains poorly understood in general. 
In particular, we denote by $P_{1-g}(\PB^{3},d[\ell])$ the moduli space of stable pairs for curves of degree $d$ and arithmetic genus $g$ in the complex projective three-space $\PB^{3}$,
where $[\ell]$ is the homology class of a line $\ell\subset \PB^{3}$.
This paper focuses on the following fundamental problem that is still widely open.

\begin{prob}
What is the global geometry of the moduli space $P_{1-g}(\mathbb{P}^{3},d[\ell])$?    
\end{prob}

In the complex projective three-space, the global geometry of a given moduli space of stable pairs is generally very difficult to study, much like that of Hilbert schemes of curves. 
A famous theorem of Castelnuovo states that for a non-degenerate smooth curve of degree $d\geq 3$ in $\PB^{3}$,
its genus $g\leq  \frac{1}{4}(d^2-1)-d+1$ (resp. $\frac{1}{4}d^2-d+1$) if $d$ is odd (resp. even). 
The first four cases with maximal genus are $(d,g)=(3,0)$, $(4,1)$, $(5,2)$ and $(6,4)$.
For the Hilbert schemes, only a handful of general results are known, and very few examples are fully understood; 
see the maximal cases, for example,  \cite{PS85,EPS87} for twisted cubics, \cite{AV92,CN12} for elliptic quartics, and \cite{Rez24a} for canonical genus $4$ curves.
Results concerning the moduli space of stable pairs are even scarce. 
The first non-trivial case is the moduli space $P_{1}(\PB^{3},3[\ell])$ for twisted cubics. 
Based upon the work \cite{PS85}, it was observed in \cite{CCM16} that $P_{1}(\PB^{3},3[\ell])$ is isomorphic to the moduli space of Gieseker semistable sheaves with the Hilbert polynomial $3t+1$. 
This Gieseker moduli space is well-understood, largely due to the work \cite{FT04}.
The study of these classical moduli spaces has been revolutionized by the notion of stability conditions on triangulated categories introduced by Bridgeland \cite{Bri07}.
The existence of Bridgeland stability conditions on the bounded derived category of coherent sheaves $\DC(\PB^{3})$ was obtained in \cite{BMT14,Mac14b}.
Let $\sigma$ be a stability condition on $\DC(\PB^{3})$ and $M_{\sigma}(v)$ denote the moduli space of $\sigma$-semistable objects with the Chern character $v$.
In stability conditions, 
there exists a wall-chamber structure such that $M_{\sigma}(v)$ is constant if the stability condition $\sigma$ lies in a chamber, but may change if $\sigma$ crosses a wall. 
As pointed out by Macr{\`{\i}} \cite{Mac14b}: {\it it would be very interesting to study how moduli spaces of Bridgeland semistable objects vary when varying the stability condition}.
The first examples of wall crossings for Hilbert schemes associated with curves of maximal genus in $\PB^{3}$ appeared in \cite{Sch20a,Xia18} for twisted cubics and \cite{GHS18} for elliptic quartics; see also \cite{SG24} for skew lines.
Rezaee \cite{Rez24a,Rez24b} initiated the study of wall crossings in Bridgeland stability for the moduli space of stable pairs of canonical genus $4$ curves, 
provided the first full list of irreducible components with birational descriptions and a partial list for the Hilbert scheme.
Nevertheless, the global geometries of the Hilbert scheme and the moduli space of stable pairs for smooth curves of degree $5$ and genus $2$ remain open.

The purpose of this paper is to study the global geometry of the moduli space of stable pairs $P_{-1}(\PB^{3},5[\ell])$ for {\it smooth curves of degree $5$ and genus $2$} or {\it quintic genus $2$ curves} for short, via wall crossings in Bridgeland stability.
Roughly speaking, for a quintic genus $2$ curve $C$, 
we have $v:=\ch(I_{C})=(1,0,-5,11)$, the Chern character of the ideal sheaf of $C$. 
Then, for a stability condition $\sigma$ on $\DC(\PB^{3})$, we have the moduli space $M_{\sigma}(v)$ of $\sigma$-semistable objects $E\in \DC(\PB^{3})$ with $\ch(E)=v$.
The goal is to investigate how $M_{\sigma}(v)$ changes via wall-crossing, along a special path $\gamma$ in the space of stability conditions (see Figure \ref{main-wall-crossing-picture}).
This path $\gamma$ crosses five chambers with four walls given by explicit destabilizing exact sequences. 
We denote by $\mathcal{M}_{0}, \mathcal{M}_{1}, \cdots, \mathcal{M}_{4}$ the Bridgeland moduli spaces with Chern character $v$ for the chambers from inside the smallest wall to outside the largest wall.
Since the BMT inequality gives a semicircle in the first chamber, the moduli space $\mathcal{M}_{0}$ is empty.
For the second chamber, the moduli space $\mathcal{M}_{1}$ gives the main component a $\mathbb{P}^{11}$-bundle over a projective variety of Fano type; this provides an efficient compactification of the space of quintic genus $2$ curves. 
In the large volume limit, the moduli space $\mathcal{M}_{4}$  is isomorphic to $P_{-1}(\PB^{3},5[\ell])$.
More precisely, the main result of this paper is stated as follows.

\begin{thm}\label{mainthm}
The moduli space $P_{-1}(\PB^{3},5[\ell])$ consists of six irreducible components which are birational to the following components:
\begin{enumerate}
\item[(1)] A $20$-dimensional $\mathbb{P}^{11}$-bundle over a projective variety $\mathcal{K}_{4}(2,2)$ of Fano type, which generically parameterizes quintic genus $2$ curves;
\item[(2)] A $21$-dimensional $\mathbb{P}^{12}$-bundle over $\mathrm{Gr}(2,4)\times \mathfrak{Fl}_{1}$, which  generically parameterizes the union of a line and a plane quartic curve together with a choice of one point on the quartic;
\item[(3)] A $21$-dimensional $\mathbb{P}^{13}$-bundle over $\mathcal{U}\times(\PB^{3})^{\vee}$, which generically parameterizes the union of a line and a plane quartic intersecting the line; 
\item[(4)] A $21$-dimensional $\mathbb{P}^{14}$-bundle over $\mathrm{Gr}(2,4)\times (\PB^{3})^{\vee}$, which generically parameterizes the disjoint union of a line and a plane quartic;
\item[(5)] A $20$-dimensional $\mathbb{P}^{14}$-bundle over $\mathrm{Fl}(4)$, which generically parameterizes the union of a plane cubic with a thickening of a line in the plane;
\item[(6)] A $27$-dimensional $\mathbb{P}^{16}$-bundle over $\mathfrak{Fl}_{4}$, which generically parameterizes a plane quintic curve together with a choice of four points on it.
\end{enumerate}
Here, $\mathcal{K}_{4}(2,2)$ is the King moduli space of the quiver representations of the Kronecker quiver $K_{4}$ with the dimension vector $(2,2)$, 
$\mathfrak{Fl}_{i}$ is the space parameterizing flag $Z_{i}\subset V\subset \PB^{3}$ with $V$ a plane and $Z_{i}$ a $0$-dimensional subscheme of length $i$, $\mathcal{U}$ is the universal line over $\mathrm{Gr}(2,4)$, 
and $\mathrm{Fl}(4)$ is the flag manifold of $\CN^{4}$.
\end{thm}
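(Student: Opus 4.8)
The proof will track the Bridgeland moduli space $M_{\sigma}(v)$, for $v=(1,0,-5,11)$, along the path $\gamma$ of Figure~\ref{main-wall-crossing-picture}, from the innermost chamber out to the large-volume limit, and will read off Theorem~\ref{mainthm} from the final moduli space. We work with the two-parameter family of geometric stability conditions $\sigma_{\alpha,\beta}$ on $\DC(\PB^{3})$ produced by the double-tilt construction of \cite{BMT14,Mac14b}. The first step is purely numerical: writing out the central charge and imposing equality of phases for a hypothetical sub- or quotient object produces a finite set of potential walls for $v$, each a semicircle in the $(\alpha,\beta)$ upper half-plane. Intersecting these with $\gamma$ and discarding the ones not realized by an actual short exact sequence in the relevant tilted heart leaves exactly four walls $W_{1},W_{2},W_{3},W_{4}$, for each of which we record the explicit destabilizing triangle. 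The innermost locus $W_{0}$ met by $\gamma$ is the BMT semicircle; there the Bogomolov--Gieseker-type inequality of \cite{BMT14} fails for $v$, so no $\sigma$-semistable object of class $v$ exists and $\mathcal{M}_{0}=\emptyset$.

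Next I identify the main component $\mathcal{M}_{1}$. A careful analysis of the $\sigma$-semistable objects of class $v$ in the second chamber shows that such an object $E$ is governed by the linear part of a Hilbert--Burch-type presentation $0\to\CO(-4)^{2}\to\CO(-2)\oplus\CO(-3)^{2}\to I_{C}\to 0$: the map on the $\CO(-3)^{2}$-summand is a $2\times2$ matrix of linear forms, i.e. a representation of the Kronecker quiver $K_{4}$ with dimension vector $(2,2)$, whose determinant cuts out a quadric surface $Q$, and the residual datum of $E$ is then a section of $\CO_{Q}(2,3)$, varying in $\PB(H^{0}(\CO_{Q}(2,3)))\cong\PB^{11}$. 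A direct $\Ext$-computation shows that $\sigma$-stability of $E$ matches King's stability of the underlying $K_{4}$-representation and that the residual datum is unobstructed, so $\mathcal{M}_{1}$ is a $\PB^{11}$-bundle over the King moduli space $\mathcal{K}_{4}(2,2)$, which has dimension $16-4-4+1=9$; hence $\dim\mathcal{M}_{1}=20$. The Fano-type property of $\mathcal{K}_{4}(2,2)$ is then extracted from its explicit birational geometry over the $\PB^{9}$ of quadrics (or from known results on Kronecker quiver moduli), and the generic $E$ is the ideal sheaf of a quintic genus $2$ curve, since a generic $(2,3)$-curve on a smooth quadric is exactly such a curve. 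This proves (1).

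The core of the argument is the wall-crossing $\mathcal{M}_{1}\rightsquigarrow\mathcal{M}_{4}$. At each wall $W_{i}$, $i=1,2,3,4$, I describe the strictly $\sigma$-semistable objects via their Jordan--H\"older factors and compare the Harder--Narasimhan filtrations on the two sides: an object destabilized from one side is an extension of the two terms of its destabilizing triangle and is replaced on the other side by the reverse extension. The governing $\Ext^{1}$- and $\Ext^{2}$-groups then determine whether crossing $W_{i}$ either grafts on a new irreducible component or performs a flip- or flop-type birational surgery on a component already present. Components (2)--(6) each arise by the first mechanism: in each case the destabilizing triangle has a sub- or quotient object supported on a plane, and the base of the new component records that plane together with the relevant incidence data---a flag $\mathfrak{Fl}_{1}$, a point on a line (the universal line $\mathcal{U}$), a residual line (a $\mathrm{Gr}(2,4)$-factor), a full flag $\mathrm{Fl}(4)$, or four points on a plane quintic ($\mathfrak{Fl}_{4}$)---while the fiber $\PB^{k}$ parametrizes the plane curve together with the remaining zero-dimensional gluing data, its rank again extracted from the $\Ext$-computation. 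Keeping track of dimensions reproduces the numbers $20,21,21,21,20,27$. Finally $\mathcal{M}_{4}=M_{\sigma}(v)$ for $\sigma$ in the outermost chamber is identified with $P_{-1}(\PB^{3},5[\ell])$ by the standard large-volume comparison: a stable pair $(F,s)$ gives the two-term complex $[\CO\to F]$ of class $v$, which is $\sigma$-stable in that chamber, and conversely; one checks there is no further wall between the outermost chamber and the large-volume limit.

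The main obstacle is the wall-crossing analysis itself. For each of the four walls one must (i) classify all $\sigma$-semistable objects of class $v$ whose HN filtration degenerates there, (ii) control the $\Ext$-groups that govern the extensions on both sides and hence the ranks of the projective bundles, and (iii) deduce the exact birational modification, keeping precise track of which components are created, which persist, and how their dimensions and bundle structures evolve. The auxiliary facts---that $\mathcal{K}_{4}(2,2)$ is of Fano type and that the large-volume limit of $M_{\sigma}(v)$ is $P_{-1}(\PB^{3},5[\ell])$---are comparatively routine but must be established for completeness.
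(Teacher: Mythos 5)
Your proposal follows essentially the same route as the paper: compute the numerical and actual walls along a path close to $\Gamma$, show $\MM_{0}=\emptyset$ from the BMT inequality, build the main component $\MM_{1}$ as a $\PB^{11}$-bundle over the Kronecker moduli space $\mathcal{K}_{4}(2,2)$, create the remaining components at the later walls, and identify the outermost chamber with $P_{-1}(\PB^{3},5[\ell])$. Your description of $\MM_{1}$ via the Hilbert--Burch resolution $0\to\CO(-4)^{2}\to\CO(-2)\oplus\CO(-3)^{2}\to I_{C}\to 0$ is an equivalent repackaging of the paper's construction, which realizes $E$ as an extension $0\to\CO(-2)\to E\to B\to 0$ with $B$ the two-term complex determined by the $2\times 2$ matrix of linear forms; both yield $\Ext^{1}(B,\CO(-2))=\CN^{12}$, i.e.\ the $\PB^{11}$-fibre, so this step is fine.

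The one place where your sketch under-determines the conclusion is the component count. The first wall is realized by \emph{two} distinct destabilizing pairs, $\langle \CO(-2),B\rangle$ and $\langle I_{C_{2}}(-1),\CO_{V}(-3)\rangle$, so a priori it could create two components; the paper shows by an explicit diagram chase that every extension of the second kind already lies in $\MM_{1}$ (it forms the singular locus, where $B$ is strictly semistable), and your proposal does not address this. Similarly, asserting that exactly components (2)--(6) are grafted on and that everything previously present persists requires the specific intersection checks at the third and fourth walls (the destabilized locus at the third wall must miss $\MM_{2}^{\prime}$; the one at the fourth wall meets $\MM_{2}^{\prime}$ and $\MM_{3}^{\prime\prime\prime}$ but must miss $\MM_{3}^{\prime}$ and $\MM_{3}^{\prime\prime}$). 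These are not routine bookkeeping: getting any of them wrong changes the number of irreducible components, so they need to be carried out, not merely flagged as ``the main obstacle.''
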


As an application of Theorem \ref{mainthm}, we study the geometry of the Hilbert scheme $\Hilb^{5t-1}(\PB^{3})$.
In the large volume limit, by roughly analyzing wall crossings, we obtain some previously unknown results on the geometry of the Hilbert scheme.
Let $\mathcal{H}^{0}_{5,2}$ be the open subscheme of the Hilbert scheme $\mathrm{Hilb}^{5t-1}(\mathbb{P}^{3})$, parameterizing quintic genus 2 curves in $\mathbb{P}^{3}$. 
It is well-known that $\mathcal{H}^{0}_{5,2}$ is irreducible of dimension $20$ (see e.g. \cite[\S 19.4]{EH24}).
However, to the best of our knowledge, what is the closure $\overline{\mathcal{H}^{0}_{5,2}}\subset \mathrm{Hilb}^{5t-1}(\mathbb{P}^{3})$ in the whole Hilbert scheme—remains an open question (see e.g. \cite[Chapter 19]{EH24}).
In fact, we have the following:

\begin{cor}\label{main-cor}
The Hilbert scheme $\Hilb^{5t-1}(\PB^{3})$ has components birational to:
\begin{enumerate}
\item[(i)] A $20$-dimensional $\mathbb{P}^{11}$-bundle over a projective variety $\mathcal{K}_{4}(2,2)$ of Fano type, which generically parameterizes quintic genus $2$ curves;
\item[(ii)] A $21$-dimensional $\mathbb{P}^{13}$-bundle over $\mathcal{U}\times(\PB^{3})^{\vee}$, which generically parameterizes the union of a line and a plane quartic intersecting the line; 
\item[(iii)] A $21$-dimensional $\mathbb{P}^{14}$-bundle over $\mathrm{Gr}(2,4)\times (\PB^{3})^{\vee}$, which generically parameterizes the disjoint union of a line and a plane quartic;
\item[(iv)] A $20$-dimensional $\mathbb{P}^{14}$-bundle over $\mathrm{Fl}(4)$, which generically parameterizes the union of a plane cubic with a thickening of a line in the plane.
\end{enumerate}
In particular, the main irreducible component $\overline{\mathcal{H}^{0}_{5,2}}$  is birational to a $\mathbb{P}^{11}$-bundle over $\mathcal{K}_{4}(2,2)$.
\end{cor}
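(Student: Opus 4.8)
The plan is to relate $\Hilb^{5t-1}(\PB^{3})$ to the stable pairs moduli space $P_{-1}(\PB^{3},5[\ell])\cong\mathcal{M}_{4}$ of Theorem~\ref{mainthm} through the DT/PT wall-crossing in $\Stab(\DC(\PB^{3}))$. Every point of either space is a rank-one object of Chern character $v=(1,0,-5,11)$: on the Hilbert-scheme side the ideal sheaves $I_{Z}$ of subschemes $Z\subset\PB^{3}$ with $\ch(I_{Z})=v$, equivalently $\chi(\CO_{Z})=-1$, and on the stable-pairs side the two-term complexes $[\CO_{\PB^{3}}\to F]$. The two chambers are related by walls across which the point sheaves $\CO_{x}$ destabilize; crossing them replaces a stable pair $[\CO_{\PB^{3}}\to F]$ whose cokernel $Q$ has length $k$ by the ideal sheaf $I_{Z}$, where $Z$ carries the same underlying curve as $F$ together with $k$ units of embedded points, and conversely. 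Since the destabilizing objects at these walls move in families of dimension strictly smaller than the components in play, the resulting birational map $\Hilb^{5t-1}(\PB^{3})\dashrightarrow P_{-1}(\PB^{3},5[\ell])$ is an isomorphism over a dense open subset of each component whose generic member has trivial, or ``generic'', cokernel; on the locus of Cohen--Macaulay curves $C$ with $\chi(\CO_{C})=-1$ it is the elementary identification $I_{C}\leftrightarrow(\CO_{C},\CO_{\PB^{3}}\twoheadrightarrow\CO_{C})$.

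It then remains to match components. For Theorem~\ref{mainthm}(1) the generic member is a smooth quintic genus $2$ curve, which is Cohen--Macaulay and carries no cokernel, so $\mathcal{H}^{0}_{5,2}$ embeds as a common dense open subset of both moduli spaces; being irreducible of dimension $20$, its closure $\overline{\mathcal{H}^{0}_{5,2}}$ is an irreducible component of $\Hilb^{5t-1}(\PB^{3})$, and by Theorem~\ref{mainthm}(1) it is birational to the $\PB^{11}$-bundle over $\mathcal{K}_{4}(2,2)$; this gives (i) and the final assertion. For Theorem~\ref{mainthm}(4), a disjoint line and plane quartic form a Cohen--Macaulay curve of degree $5$ with $\chi(\CO_{C})=1+(-2)=-1$, hence literally a point of $\Hilb^{5t-1}(\PB^{3})$ with trivial cokernel, and its birational model is the same $\PB^{14}$-bundle over $\mathrm{Gr}(2,4)\times(\PB^{3})^{\vee}$, giving (iii). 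For Theorem~\ref{mainthm}(5), the generic member, a plane cubic glued to a thickening inside the plane of a line, is again a Cohen--Macaulay curve with Hilbert polynomial $5t-1$ and trivial cokernel, with birational model the $\PB^{14}$-bundle over $\mathrm{Fl}(4)$, giving (iv). For Theorem~\ref{mainthm}(3) the reduced curve $L\cup D$ (a line meeting a plane quartic) has $\chi(\CO_{L\cup D})=-2$, so the stable pair necessarily carries a length-$1$ cokernel; crossing the wall converts it into the ideal sheaf of $L\cup D$ with one embedded point at a point of $L$ --- precisely the datum recorded by the factor $\mathcal{U}$ --- which raises the Euler characteristic to $-1$ and sweeps out a $21$-dimensional family birational to the $\PB^{13}$-bundle over $\mathcal{U}\times(\PB^{3})^{\vee}$ of Theorem~\ref{mainthm}(3), giving (ii). In each of the four cases the dimensions $20,21,21,20$ confirm that one obtains a genuine irreducible component of $\Hilb^{5t-1}(\PB^{3})$. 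The components in Theorem~\ref{mainthm}(2) and (6), whose generic members are stable pairs with cokernels of length $1$ and $4$, map to non-reduced, embedded-point loci of $\Hilb^{5t-1}(\PB^{3})$ that we do not attempt to resolve here, which is why the corollary only claims ``has components birational to''.

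The main obstacle is to control this wall-crossing precisely enough to guarantee that it restricts to a \emph{birational} isomorphism on each of the four components --- that is, to exclude any further contraction of the projective-bundle fibers --- and to carry out the embedded-point versus Euler-characteristic bookkeeping correctly, most delicately for Theorem~\ref{mainthm}(3), where a length-one cokernel must be converted into an embedded point, and for Theorem~\ref{mainthm}(5), where the non-reduced structure of the generic member requires care. A fully rigorous treatment would require writing down the explicit destabilizing short exact sequences at the DT/PT wall, in the spirit of the four walls analyzed along the path $\gamma$, together with a local description of $\Hilb^{5t-1}(\PB^{3})$ near these embedded-point loci; the phrase ``roughly analyzing wall crossings'' reflects that we instead identify generic members on the two sides and compare dimensions, which already suffices to conclude that (i)--(iv) are irreducible components of $\Hilb^{5t-1}(\PB^{3})$ with the stated birational models.
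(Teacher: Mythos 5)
Your strategy coincides with the paper's: both treat the left branch $\Gamma$ of $\mu_{\alpha,\beta}(v)=0$ as the DT/PT wall separating $P_{-1}(\PB^{3},5[\ell])\cong\MM_{4}$ from the Gieseker chamber, and both decide which components survive by inspecting the cokernel $\CH^{1}(E)$ of the generic member. The paper's mechanism is slightly sharper than yours: it records that $\Gamma$ is an actual wall with destabilizing pairs $\langle I_{C_{i}},T_{i}[-1]\rangle$, where $C_{i}$ is a Cohen--Macaulay quintic of arithmetic genus $2+i$ and $T_{i}$ has length $i$, and then verifies the stability criterion $\Hom(I_{C_{i}},I)=0$ for the ideal sheaf $I$ of a generic member of each of (i)--(iv), whereas you argue by matching generic members and comparing dimensions. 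For (i), (iii), (iv) the two arguments agree in substance, since the generic object is already the ideal sheaf of a Cohen--Macaulay curve with Hilbert polynomial $5t-1$ and is untouched by the wall; your Euler-characteristic bookkeeping in these cases is correct.

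The genuine weak point is component (ii), and there your write-up contains one claim that is false and one that is unjustified. The assertion that ``the destabilizing objects at these walls move in families of dimension strictly smaller than the components in play'' fails for $\MM_{3}^{\prime\prime}$: by Proposition \ref{Modulisp3}, \emph{every} object of that component has $\CH^{1}(E)\cong\CO_{p}$ with $p=Z(s)\in\ell$, so the entire $21$-dimensional component is destabilized at $\Gamma$ and must be replaced wholesale, not modified along a proper closed subset. You then assert that the replacement family of ideal sheaves is again $21$-dimensional, but the elementary modification at $\Gamma$ exchanges $\PB(\Ext^{1}(\CO_{p},I_{C}[1]))=\PB(\Ext^{2}(\CO_{p},I_{C}))$, which is a single point over each pair $(C,p)$ with $C=\ell\cup C_{4}$ and $p$ a smooth point of $\ell$, for $\PB(\Hom(I_{C},\CO_{p}))\cong\PB^{1}$, since $I_{C}\otimes k(p)$ is $2$-dimensional at such a point; the naive count therefore yields a $22$-dimensional family of ideal sheaves carrying an embedded point of arbitrary direction at $p$, which in turn sits in the closure of the larger locus of $\ell\cup C_{4}$ together with a floating point. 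To conclude that $\Hilb^{5t-1}(\PB^{3})$ has a $21$-dimensional component birational to the $\PB^{13}$-bundle over $\mathcal{U}\times(\PB^{3})^{\vee}$ one must either constrain the embedded-point direction or rule these larger loci out as components; neither your dimension comparison nor the paper's one-line check $\Hom(I_{C_{i}},I)=0$ (which only establishes stability of the ideal sheaves on the DT side) accomplishes this. This is precisely the step you flag as delicate, and it is the one place where ``identify generic members and compare dimensions'' does not suffice.
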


The structure of this paper is organized as follows. 
In Section \ref{Prelim}, we collect some basics on tilt stability, Bridgeland stability conditions, wall and chamber structures and moduli spaces of Pandharipande--Thomas stable pairs.
Section \ref{Wall-chamber} gives the geometric description of the walls in tilt stability and Bridgeland stability with respect to the Chern character $v=(1,0,-5,11)$ of the ideal sheaf of a quintic genus $2$ curve.
Section \ref{Ext-on-walls} calculates all relevant Ext groups related to these walls.
Theorem \ref{mainthm} and Corollary \ref{main-cor} are proved in Section \ref{proofs-main}.
In Appendix \ref{three-tech-lem}, three technical lemmas are provided.

\subsection*{Acknowledgements}
This work is partially supported by the National Natural Science Foundation of China (No. 12171351).

%===================================================================================
\section*{Notation and conventions}
Throughout the paper, we work over the complex number field $\CN$.
Let $X$ be a smooth projective variety and all points are close points.
We use $\DC(X):=\DC(\Coh(X))$ to denote the bounded derived category of coherent sheaves.
All functors are derived functors without being explicitly mentioned; for instance, the direct image functor, inverse image functor and tensor product.
Without causing confusion, we use $A$ and $B$ interchangeably to denote the subobject and the quotient in the destabilizing exact sequence of any wall, for both the tilt stability $\sigma_{\alpha,\beta}$ and the Bridgeland stability $\lambda_{\alpha,\beta,s}$. 
We use $\langle A, B \rangle$ to denote the destabilizing pair or the corresponding wall.
We will choose a special path crossing the walls toward the large volume limit (see Figure \ref{main-wall-crossing-picture}). 
By the large volume limit, we mean the region in the stability manifold of $\DC(X)$, where $\alpha\rightarrow +\infty$. 
We denote by $\Gamma$ the left branch of the hyperbola $\beta^{2}-\alpha^{2}=10$ in the $(\alpha,\beta)$-plane.
By closing $\Gamma$, we mean a sufficiently small tubular neighborhood of
the locus of $\Gamma$, so that there is no other wall intersecting this neighborhood.
We will use the following notation.
\begin{center}
\begin{tabular}{r l }
$\CH^{i}$ & The $i$-th cohomology object in the corresponding heart on $\DC(X)$\\
$H^{i}(E)$ & The $i$-th sheaf cohomology group of object $E\in \DC(X)$\\
$\ch_{i}(E)$ & The $i$-th Chern character of an object $E\in \DC(X)$\\
$\ch_{\leq i}(E)$ & The truncated Chern character $(\ch_{0}(E),\cdots,\ch_{i}(E))$\\
$W(c,r)$ & The semicircle defined by $\{(\alpha,\beta)\in \RN_{>0}\times \RN | \alpha^{2}+(\beta-c)^{2}=r^{2}\}$\\
$\mathcal{K}_{4}(2,2)$ & The King moduli space of the quiver representations of the Kronecker \\& quiver $K_{4}$ with the dimension vector $(2,2)$\\ 
$\mathfrak{Fl}_{i}$ & The space parameterizing flag $Z_{i}\subset V\subset \PB^{3}$ with $V$ a plane and $Z_{i}$ a\\& $0$-dimensional subscheme of length $i$\\
$\mathrm{Fl}(4)$ & The flag manifold of $\CN^{4}$\\
$\mathcal{U}$ & The universal line over the Grassmann manifold $\mathrm{Gr}(2,4)$
\end{tabular}
\end{center}

%=================================================================================
 
\section{Preliminaries}\label{Prelim}

In this section, we collect some basics on tilt stability, Bridgeland stability conditions, wall and chamber structures and moduli spaces of Pandharipande--Thomas stable pairs.

\subsection{Tilt stability}

Let $X$ be a smooth projective $3$-fold. Denote by $K(X)$ the Grothendieck group of $\DC(X)$. 
Fix a lattice $\Lambda$ of finite rank and a surjective homomorphism $v:K(X)\rightarrow \Lambda$.
Suppose that $\A$ is the heart of a bounded $t$-structure on $\DC(X)$.
A {\it (weak) stability function} on $\A$ is a group homomorphism (called a {\it central charge}) $Z: \Lambda \rightarrow \CN$ such that for every non-zero object $E\in \A$, we have
$\mathrm{Im}\, Z(v(E))\geq 0$, and if $\mathrm{Im}\, Z(v(E))= 0$, then $\mathrm{Re}\, Z(v(E))\, (\leq ) < 0$.
For an object $E\in \A$, we set $Z(E):=Z(v(E))$ for simplicity.
The {\it $\mu_{\sigma}$-slope} of a  non-trivial object $E\in \A$ is defined as 
$$
\mu_{\sigma}(E)
:=   
\begin{cases}
-\frac{\mathrm{Re}\, Z(E)}{\mathrm{Im}\, Z(E)} & \textrm{if}\, \mathrm{Im}\, Z(E)>0; \\
+\infty &  \textrm{otherwise,}
\end{cases}
$$
where $\sigma:=(\A,Z)$.
An object $E\in \A$ is called {\it $\sigma$-(semi)stable} if for every non-trivial proper subobject $F\subset E$, we have $\mu_{\sigma}(F) \, (\leq) < \mu_{\sigma}(E/F)$.

\begin{defn}
Let $Z$ be a (weak) stability function on $\A$.
We say that $\sigma=(\A,Z)$ is a {\it (weak) stability condition} on $\DC(X)$ (with respect to $\Lambda$) if the following conditions hold:
\begin{enumerate}
\item[(i)] Every non-zero object $E\in \A$ has a Harder--Narasimhan filtration in $\sigma$-semistability.

\item[(ii)] There is a quadratic form $\Delta$ on $\Lambda\otimes \mathbb{R}$ such that $\Delta|_{\ker(Z)}$ is negative definite and $\Delta(E)\geq 0$ for every $\sigma$-semistable object $E\in \A$.
\end{enumerate}
\end{defn}

\begin{rem} 
The set $\Stab_{\Lambda}(\DC(X))$ of stability conditions has a complex manifold structure by \cite[Theorem 1.2]{Bri07}, which is called the stability manifold of $\DC(X)$.  
If $\Lambda=K_{num}(X)$ is the numerical Grothendieck group, then we drop the subscript $\Lambda$.
\end{rem}

Assume that $\sigma=(\A, Z)$ is a weak stability condition on $\DC(X)$.
For a real number $\mu$, 
there exists a torsion pair $(\mathcal{T}_{\sigma}^{\mu}, \mathcal{F}_{\sigma}^{\mu})$ on $\A$: 
\begin{align*} 
\mathcal{T}_{\sigma}^{\mu} &:=
\{E \in \A \mid \textrm{ any quotient $E\twoheadrightarrow G$ satisfies } \mu_{\sigma}(G)>\mu\}, \\
\mathcal{F}_{\sigma}^{\mu} &:= 
\{ E \in \A \mid \textrm{ any subsheaf $F\hookrightarrow R$  satisfies } \mu_{\sigma}(F)\leq \mu\}.
\end{align*}
Then, the heart obtained by tilting $\A$ with respect to $\sigma$ at the slope $\mu$ is given by the extension closure
\begin{align*}
\A^{\mu}_{\sigma}
&:=
\langle \mathcal{T}_{\sigma}^{\mu}, \mathcal{F}_{\sigma}^{\mu}[1]\rangle \\
&= \{E \mid \CH_{\A}^{0}(E)\in \mathcal{T}_{\sigma}^{\mu}, \CH_{\A}^{-1}(E)\in \mathcal{F}_{\sigma}^{\mu}, \CH_{\A}^{i}(E)=0\, \textrm{ for } i\neq 0, -1 \}.
\end{align*}

From now on, let $H$ be an ample divisor on $X$.
For $\PB^{3}$, we use the hyperplane $H$ without explicitly mentioning it.
For every integer $j\in \{0,1, 2, 3\}$, 
we consider the lattice $\Lambda_{H}^{j}\cong \ZN^{j+1}$ generated by 
$
(H^{3}\ch_{0}, H^{2}\ch_{1}, \cdots, H^{3-j}\ch_{j})\in \Q^{j+1}
$
with the map $\nu_{H}^{j}: K(X)\rightarrow \Lambda_{H}^{j}$.
Then, the pair $\sigma_{H}:= (\Coh(X), Z_{H})$ with the central charge
$
Z_{H}(E):=  -H^{2} \cdot \ch_{1}(E)+ i H^{3} \cdot \ch_{0}(E)
$
is a weak stability condition on $\DC(X)$ with respect to the lattice $\Lambda^{1}_{H}$, which is known as {\it $\mu_{H}$-slope stability}.
For a real number $\beta$,
we denote by 
$$
\Coh^{\beta}(X):=\langle  \mathcal{T}_{\sigma_{H}}^{\beta}, \mathcal{F}_{\sigma_{H}}^{\beta}[1]\rangle
$$ 
the heart of a bounded $t$-structure on $\DC(X)$ obtained by tilting $\Coh(X)$ with respect to the $\mu_{H}$-slope stability at the slope $\beta$.
We use $\CH_{\beta}^{i}$ to denote the $i$-th cohomology object with respect to the heart
$\Coh^{\beta}(X)$.
For any object $E\in \DC(X)$, the twisted Chern character is given by $\ch^{\beta}(E):=e^{-\beta H}\cdot \ch(E)$.
Then a new central charge on $\Coh^{\beta}(X)$ is defined as
$$
Z_{\alpha, \beta}(E)
:=\frac{1}{2}\alpha^2 H^{3}\cdot \ch_{0}^{\beta}(E)-H \cdot \ch_{2}^{\beta}(E)
+ i H^{2}\cdot \ch_{1}^{\beta}(E), 
$$
where $(\alpha,\beta)\in \RN_{>0}\times \RN$.

\begin{prop}[{\cite[Proposition 2.12]{BLMS23}}]
There exists a continuous family of weak stability conditions (with respect to $\Lambda_{H}^{2}$), 
$\sigma_{\alpha, \beta}:= (\Coh^{\beta}(X), Z_{\alpha, \beta})$, 
parameterizing $(\alpha, \beta)\in \RN_{>0}\times \RN$ with the quadratic form given by the $H$-discriminant $\Delta_{H}$. Here, for any $E\in \DC(X)$, the $H$-discriminant $\Delta_{H}$ is defined as
\begin{align*}
\Delta_{H}(E)
&:= (H^{2}\cdot \ch^{\beta}_{1}(E))^{2}-2(H^{3}\cdot\ch^{\beta}_{0}(E))\cdot (H \cdot \ch^{\beta}_{2}(E)) \\ 
&= (H^{2}\cdot \ch_{1}(E))^{2}-2(H^{3}\cdot\ch_{0}(E))\cdot (H \cdot \ch_{2}(E)).
\end{align*}
\end{prop}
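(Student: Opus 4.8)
The plan is to check, for $\sigma_{\alpha,\beta}=(\Coh^{\beta}(X),Z_{\alpha,\beta})$ and every $(\alpha,\beta)\in\RN_{>0}\times\RN$, the three ingredients in the definition of a weak stability condition: that $Z_{\alpha,\beta}$ is a weak stability function on the tilted heart $\Coh^{\beta}(X)$, that Harder--Narasimhan filtrations exist, and that the $H$-discriminant $\Delta_{H}$ serves as the required quadratic form. Continuity of the resulting family in $(\alpha,\beta)$ is then automatic, since $Z_{\alpha,\beta}$ depends real-analytically on $(\alpha,\beta)$ while $\Coh^{\beta}(X)$ is locally constant in $\beta$. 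I would use the $\beta$-invariance of $\Delta_{H}$ recorded in the statement throughout, as it lets one slide $\beta$ without changing the discriminant.

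For the first two ingredients I would argue as follows. Every $E\in\Coh^{\beta}(X)$ sits in an exact sequence $0\to H^{-1}(E)[1]\to E\to H^{0}(E)\to 0$ with $H^{0}(E)\in\mathcal{T}^{\beta}_{\sigma_{H}}$ and $H^{-1}(E)\in\mathcal{F}^{\beta}_{\sigma_{H}}$; since $\mathrm{Im}\,Z_{\alpha,\beta}=H^{2}\cdot\ch_{1}^{\beta}$ measures precisely whether the $\mu_{H}$-slope lies above $\beta$, additivity yields $\mathrm{Im}\,Z_{\alpha,\beta}(E)\geq 0$, and in the boundary case $\mathrm{Im}\,Z_{\alpha,\beta}(E)=0$ the object $E$ is an extension of a sheaf supported in dimension $\leq 1$ — for which $\mathrm{Re}\,Z_{\alpha,\beta}=-H\cdot\ch_{2}^{\beta}\leq 0$, the support class being effective — by the shift of a sheaf that is $\mu_{H}$-semistable of slope exactly $\beta$, where the classical Bogomolov--Gieseker inequality $\Delta_{H}\geq 0$ forces $H\cdot\ch_{2}^{\beta}\leq 0$; hence $\mathrm{Re}\,Z_{\alpha,\beta}(E)\leq 0$, so $Z_{\alpha,\beta}$ is a weak stability function. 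For the Harder--Narasimhan property I would first treat rational $\beta$, where $\mathrm{Im}\,Z_{\alpha,\beta}$ has discrete image and the tilted heart is Noetherian, so the standard argument excluding infinite chains of strictly decreasing or increasing slope applies; the general case then follows by an approximation argument, or by invoking Polishchuk's general existence criterion. This step is routine and I would keep it brief.

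The main obstacle is the support property. Negative-definiteness of $\Delta_{H}$ on $\ker Z_{\alpha,\beta}$ is a direct computation: in the coordinates $(H^{3}\ch_{0}^{\beta},H^{2}\ch_{1}^{\beta},H\cdot\ch_{2}^{\beta})$ the kernel is the line $\{(t,0,\tfrac{1}{2}\alpha^{2}t)\}$, on which $\Delta_{H}=-\alpha^{2}t^{2}<0$ for $t\neq 0$ since $\alpha>0$. The substantive statement is the tilt Bogomolov--Gieseker inequality $\Delta_{H}(E)\geq 0$ for every $\sigma_{\alpha,\beta}$-semistable $E$. Here I would first dispose of the case $\ch_{0}^{\beta}(E)=0$, where $\Delta_{H}(E)=(H^{2}\cdot\ch_{1}(E))^{2}\geq 0$ outright, and, after normalizing the sign of $\ch_{0}^{\beta}(E)$ by passing to a derived dual when necessary, analyze the $\mu_{H}$-Harder--Narasimhan factors of $H^{0}(E)$ and $H^{-1}(E)$: tilt-semistability of $E$ pins down their slopes relative to $\beta$, the classical Bogomolov--Gieseker inequality bounds the discriminant of each factor, and combining these — sliding $\beta$ to a convenient point and estimating the cross terms of the bilinear form attached to $\Delta_{H}$ (which has signature $(2,1)$) via a Hodge-index-type inequality — yields $\Delta_{H}(E)\geq 0$. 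This is the argument of \cite{BMT14,BLMS23}; the sign bookkeeping in this last combination step is where the real difficulty lies.
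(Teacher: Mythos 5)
Your proposal is correct and follows the standard construction from \cite{BMT14,BMS16,BLMS23}; the paper itself offers no proof here, simply citing \cite[Proposition 2.12]{BLMS23}, so there is nothing to diverge from. You correctly identify the one substantive ingredient---the tilt Bogomolov--Gieseker inequality $\Delta_{H}(E)\geq 0$ for $\sigma_{\alpha,\beta}$-semistable $E$ (Proposition \ref{tilt-Bog-ineq}, i.e.\ \cite[Corollary 7.3.2]{BMT14})---and the remaining verifications (the weak stability function on $\Coh^{\beta}(X)$, the HN property via Noetherianity for rational $\beta$, and the negative definiteness of $\Delta_{H}$ on $\ker Z_{\alpha,\beta}$, where your computation $\Delta_{H}=-\alpha^{2}t^{2}$ is right) are carried out correctly.
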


The weak stability condition $\sigma_{\alpha, \beta}$ is also known as {\it tilt stability} introduced by Bayer--Macr{\`{\i}}--Toda \cite{BMT14}. 
The {\it tilt slope} is given by
$$
\mu_{\alpha,\beta}(E):=\mu_{\sigma_{\alpha,\beta}}(E)
=-\frac{\frac{1}{2}\alpha^2 H^{3}\cdot \ch_{0}^{\beta}(E)-H \cdot \ch_{2}^{\beta}(E)}{H^{2}\cdot \ch_{1}^{\beta}(E)},
$$
if $H^{2}\cdot \ch_{1}^{\beta}(E)\neq 0$; otherwise, $\mu_{\alpha,\beta}(E):=+\infty$.
Moreover, there exists a version of the Bogomolov inequality for tilt stability (see \cite[Corollary 7.3.2]{BMT14} and \cite[Theorem 3.5]{BMS16}):

\begin{prop}\label{tilt-Bog-ineq}
If $E\in \Coh^{\beta}(X)$ is $\sigma_{\alpha,\beta}$-semistable,
then $\Delta_{H}(E)\geq 0$.
\end{prop}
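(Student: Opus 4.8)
The statement to prove is Proposition~\ref{tilt-Bog-ineq}: if $E\in \Coh^{\beta}(X)$ is $\sigma_{\alpha,\beta}$-semistable, then $\Delta_{H}(E)\geq 0$. This is the tilt-stability version of the Bogomolov inequality, and the plan is to reduce it to the classical Bogomolov--Gieseker inequality for slope-semistable torsion-free sheaves on $X$, which is the key external input. The argument is essentially the one in \cite[Corollary 7.3.2]{BMT14} and \cite[Theorem 3.5]{BMS16}, so I will sketch that route.

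\textbf{Step 1: Reduce to the case $\mu_{\alpha,\beta}(E)\neq +\infty$ and relate $E$ to sheaves.} First I would recall the structure of an object $E\in \Coh^{\beta}(X)$: it sits in an exact triangle with $\CH^{-1}_{\beta}(E)\in \mathcal{F}^{\beta}_{\sigma_H}$ and $\CH^{0}_{\beta}(E)\in \mathcal{T}^{\beta}_{\sigma_H}$. If $H^{2}\cdot\ch_1^{\beta}(E)=0$, then by the definition of $\Coh^{\beta}(X)$ one checks $\mu_{\alpha,\beta}$-semistability forces $E$ to be (a shift of) a sheaf supported in dimension $\le 1$, or more precisely $E$ is an extension of objects of the form $T$ and $F[1]$ with controlled Chern characters, and in that degenerate case $\Delta_H(E)\ge 0$ follows directly from $(H^2\cdot\ch_1)=0$ together with the sign of $(H^3\ch_0)(H\cdot\ch_2)$; I would dispatch this boundary case quickly. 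So assume $H^2\cdot\ch_1^{\beta}(E)>0$.

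\textbf{Step 2: Pass to the large-$\alpha$ limit and use the known Bogomolov inequality for slope stability.} The main point is that $\Delta_H(E)$ does not depend on $\alpha$, so it suffices to prove the inequality for $\alpha\gg 0$. As $\alpha\to+\infty$, the stability condition $\sigma_{\alpha,\beta}$ degenerates and $\sigma_{\alpha,\beta}$-semistable objects are governed by $\mu_H$-slope stability of the cohomology sheaves $\CH^0_{\beta}(E)$ and $\CH^{-1}_{\beta}(E)$: a standard argument (e.g. the large-volume-limit analysis in \cite{BMT14}) shows that for $\alpha\gg 0$ a $\sigma_{\alpha,\beta}$-semistable object $E$ with $H^2\ch_1^\beta(E)>0$ is either a $\mu_H$-semistable torsion-free sheaf in $\mathcal{T}^\beta_{\sigma_H}$ (up to quotients by dimension $\le 1$ sheaves, which only decrease $\Delta_H$ in the relevant sense), or a shift $F[1]$ of a $\mu_H$-semistable torsion-free sheaf $F\in\mathcal{F}^\beta_{\sigma_H}$. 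In either case the classical Bogomolov--Gieseker inequality $\Delta_H(\cdot)\ge 0$ for $\mu_H$-semistable torsion-free sheaves on the $3$-fold $X$ applies, and $\Delta_H$ is insensitive to shifts and to modifications in dimension $\le 1$ (which contribute $\ch_0=0$, $H^2\ch_1=0$), giving $\Delta_H(E)\ge 0$.

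\textbf{Step 3: Handle the genuinely mixed case via Harder--Narasimhan/Jordan--Hölder and additivity.} The subtlety, and what I expect to be the main obstacle, is an object $E$ whose $\CH^0_{\beta}(E)$ and $\CH^{-1}_{\beta}(E)$ are both nonzero: such $E$ need not be a sheaf or a shifted sheaf even in the limit. Here I would argue that for $\alpha$ large the two pieces $\CH^0_\beta(E)\in\Coh^\beta(X)$ and $\CH^{-1}_\beta(E)[1]\in\Coh^\beta(X)$ have $\mu_{\alpha,\beta}$-slopes that diverge (one to $-\infty$, one bounded, by inspecting the $\alpha^2 H^3\ch_0$ term), so $\mu_{\alpha,\beta}$-semistability of $E$ forces one of them to vanish once $\alpha$ is large enough — contradiction unless $E$ was already of sheaf or shifted-sheaf type. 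Making this slope-divergence estimate precise, and combining it cleanly with the quadratic-form formalism so that $\Delta_H|_{\ker Z_{\alpha,\beta}}$ negative-definiteness is also verified (the second half of the ``weak stability condition'' axiom, though for this Proposition only $\Delta_H(E)\ge0$ on semistables is asserted), is the technical heart; once that is in place, Step 2 closes the argument. I would then remark that the full statement is exactly \cite[Corollary 7.3.2]{BMT14}/\cite[Theorem 3.5]{BMS16} and cite it for the details rather than reproduce the entire computation.
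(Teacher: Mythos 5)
The paper offers no proof of Proposition \ref{tilt-Bog-ineq} at all: it is stated as a known result with a pointer to \cite[Corollary 7.3.2]{BMT14} and \cite[Theorem 3.5]{BMS16}, so your closing sentence (cite those references for the details) is exactly what the paper does. The issue is that the sketch you place in front of that citation contains a genuine logical gap, so it cannot stand as a proof on its own.

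The gap is in Step 2. From ``$\Delta_{H}(E)$ does not depend on $\alpha$'' you conclude that ``it suffices to prove the inequality for $\alpha\gg 0$'', but the \emph{hypothesis} of the proposition does depend on $\alpha$: an object that is $\sigma_{\alpha_{0},\beta}$-semistable need not remain semistable as $\alpha$ increases --- it can be destabilized at one of the (locally finite, nested) walls before reaching the large volume limit. The actual proof in \cite{BMS16} is an induction on $H^{2}\cdot\ch_{1}^{\beta}(E)$: either $E$ stays semistable for all $\alpha\gg 0$, in which case the large-volume description plus the classical Bogomolov--Gieseker inequality applies (your Step 2), or $E$ becomes strictly semistable on a wall, where one passes to its Jordan--H\"older factors $E_{i}$, applies the inductive hypothesis $\Delta_{H}(E_{i})\geq 0$, and invokes the quadratic-form lemma (negative semi-definiteness of $\Delta_{H}$ on the relevant $2$-plane through $\ker Z_{\alpha,\beta}$, i.e.\ the support-property convexity $\Delta_{H}(E)\geq\sum_{i}\Delta_{H}(E_{i})$ when the central charges are aligned). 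This wall-crossing induction is the technical heart of the cited proofs and is entirely absent from your outline. A secondary inaccuracy: in Step 3 you claim that for $\alpha\gg 0$ semistability forces $\CH^{-1}_{\beta}(E)$ or $\CH^{0}_{\beta}(E)$ to vanish; this is false, since objects with $\CH^{-1}_{\beta}(E)$ a $\mu_{H}$-semistable torsion-free sheaf and $\CH^{0}_{\beta}(E)$ supported in dimension $0$ (the PT-type objects used throughout this paper, cf.\ Lemma \ref{tilt-stab-Bri-stab}) are tilt-semistable for all large $\alpha$. That mixed case must be handled directly --- as you in fact begin to do with the observation that a dimension $\leq 1$ modification changes $\Delta_{H}$ by $+2(H^{3}\ch_{0}(F))(H\cdot\ch_{2}(T))\geq 0$ --- rather than excluded by a slope-divergence argument.
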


The following large volume limit plays a significant role in tilt stability (see \cite[Proposition 14.2]{Bri08}, \cite[Proposition 4.8]{BBF+} and \cite[Lemma 2.7]{BMS16}).

\begin{prop}\label{limit-tilt-stab-Gie-stab-prop}
Let $E\in \DC(X)$ and $\mu_{H}(E)>\beta$.
Then $E\in \Coh^{\beta}(X)$ and $E$ is $\sigma_{\alpha,\beta}$-(semi)stable for $\alpha\gg 0$
if and only if $E$ is a $2$-$H$-Gieseker (semi)stable sheaf.
\end{prop}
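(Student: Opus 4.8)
\emph{Proof sketch.} The plan is to extract everything from the behaviour of the tilt slope as $\alpha\to+\infty$. For an object $G\in\Coh^{\beta}(X)$ with $H^{2}\cdot\ch_{1}^{\beta}(G)>0$ one has
\[
\mu_{\alpha,\beta}(G)=-\frac{\alpha^{2}}{2}\cdot\frac{H^{3}\cdot\ch_{0}^{\beta}(G)}{H^{2}\cdot\ch_{1}^{\beta}(G)}+\frac{H\cdot\ch_{2}^{\beta}(G)}{H^{2}\cdot\ch_{1}^{\beta}(G)},
\]
while $\mu_{\alpha,\beta}(G)=+\infty$ when $H^{2}\cdot\ch_{1}^{\beta}(G)=0$. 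Since $\ch_{0}^{\beta}=\ch_{0}$ and $H^{2}\cdot\ch_{1}^{\beta}(G)=(\mu_{H}(G)-\beta)\,H^{3}\cdot\ch_{0}(G)$ whenever $\ch_{0}(G)\neq 0$, the coefficient of $\alpha^{2}$ is governed by $\ch_{\leq 1}(G)$, and whenever two objects share this coefficient the constant term is governed by $\ch_{\leq 2}(G)$; these are exactly the invariants compared by $\mu_{H}$-slope stability and then by $2$-$H$-Gieseker stability. In particular, a nonzero object $G$ with $H^{3}\cdot\ch_{0}(G)>0$ and $\mu_{H}(G)>\beta$ has $\mu_{\alpha,\beta}(G)\to-\infty$, a sheaf supported in dimension $\leq 2$ has bounded (or infinite) tilt slope, and for $0\neq G\in\mathcal{F}_{\sigma_{H}}^{\beta}$ one has $\mu_{\alpha,\beta}(G[1])\to+\infty$.

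First I would prove that a $\sigma_{\alpha,\beta}$-semistable object $E\in\Coh^{\beta}(X)$ (for all $\alpha\gg 0$) is a pure sheaf, hence automatically lies in $\mathcal{T}_{\sigma_{H}}^{\beta}$, since a complex concentrated in degree $0$ that belongs to $\langle\mathcal{T}_{\sigma_{H}}^{\beta},\mathcal{F}_{\sigma_{H}}^{\beta}[1]\rangle$ must lie in $\mathcal{T}_{\sigma_{H}}^{\beta}$. Applying the cohomology objects of $\Coh^{\beta}(X)$ yields $0\to\CH_{\beta}^{-1}(E)[1]\to E\to\CH_{\beta}^{0}(E)\to 0$ with $\CH_{\beta}^{-1}(E)\in\mathcal{F}_{\sigma_{H}}^{\beta}$. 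If $\CH_{\beta}^{-1}(E)\neq 0$, then the subobject $\CH_{\beta}^{-1}(E)[1]$ has $\mu_{\alpha,\beta}\to+\infty$, whereas the Bogomolov inequality $\Delta_{H}\geq 0$ for the tilt-semistable Harder--Narasimhan factors of $E$ and of $\CH_{\beta}^{0}(E)$ (Proposition \ref{tilt-Bog-ineq}) bounds the rate at which $\mu_{\alpha,\beta}(\CH_{\beta}^{0}(E))$ can grow; a comparison of rates shows that $\CH_{\beta}^{-1}(E)[1]$ strictly destabilizes $E$ for $\alpha\gg 0$, a contradiction. Thus $E=\CH_{\beta}^{0}(E)$ is a sheaf, and a subsheaf supported in dimension strictly smaller than $E$ would again destabilize $E$, so $E$ is pure. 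Finally, for any saturated subsheaf $F\subseteq E$ one has $F\hookrightarrow E$ in $\Coh^{\beta}(X)$, and $\sigma_{\alpha,\beta}$-semistability gives $\mu_{\alpha,\beta}(F)\leq\mu_{\alpha,\beta}(E/F)$ for $\alpha\gg 0$: matching the coefficients of $\alpha^{2}$ forces $\mu_{H}(F)\leq\mu_{H}(E/F)$ (equivalently the inequality between the coefficients of $m^{2}$ in the normalized Hilbert polynomials), and in case of equality, matching the constant terms forces the inequality between the coefficients of $m$, via a routine Riemann--Roch identification that cancels the $\beta$- and $X$-dependent terms. Together these say exactly that $E$ is $2$-$H$-Gieseker semistable, and the strict inequalities propagate to give the stable version.

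For the converse, let $E$ be a $2$-$H$-Gieseker (semi)stable sheaf with $\mu_{H}(E)>\beta$. Then $E$ is $\mu_{H}$-semistable with every quotient of $\mu_{H}$-slope $>\beta$, so $E\in\mathcal{T}_{\sigma_{H}}^{\beta}\subseteq\Coh^{\beta}(X)$. If $E$ were not $\sigma_{\alpha,\beta}$-semistable for every $\alpha\gg 0$, then --- since for fixed $\ch_{\leq 2}(E)$ the tilt-stability walls meeting the vertical ray $\{\beta\}\times\RN_{>0}$ form a finite set of semicircles, by the wall-and-chamber structure and the boundedness of walls via $\Delta_{H}$ --- it would be $\sigma_{\alpha,\beta}$-unstable for all large $\alpha$, via a fixed short exact sequence $0\to A\to E\to B\to 0$ in $\Coh^{\beta}(X)$ with $\mu_{\alpha,\beta}(A)>\mu_{\alpha,\beta}(B)$ for $\alpha\gg 0$. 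Passing to cohomology sheaves and using that $E$ is a sheaf forces $\CH_{\beta}^{-1}(A)=0$, so $A$ is a sheaf; replacing it by its image in $E$ produces a genuine subsheaf, and running the asymptotic comparison of the previous paragraph in reverse contradicts the $2$-$H$-Gieseker (semi)stability of $E$. This yields the equivalence in both the semistable and the stable cases.

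The hard part is the uniformity in $\alpha$ in the first step: ruling out that $E$ is ``destabilized at infinity'' by its $\mathcal{F}_{\sigma_{H}}^{\beta}$-part or by a lower-dimensional subsheaf forces one to combine the $\alpha^{2}$-asymptotics with the Bogomolov inequality $\Delta_{H}(-)\geq 0$ so as to control simultaneously the (possibly negative) ranks and the discriminants of the relevant Harder--Narasimhan factors, and this is where one genuinely uses that $\sigma_{\alpha,\beta}$-semistability is assumed on a whole neighbourhood of the large-volume limit rather than merely along a sequence $\alpha_{n}\to+\infty$ --- a fact itself secured by the local finiteness of walls recalled in Section \ref{Prelim}. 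Everything else reduces to bookkeeping with Chern characters and long exact sequences.
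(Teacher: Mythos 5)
This proposition is not proved in the paper: it is stated with citations to \cite[Proposition 14.2]{Bri08}, \cite[Proposition 4.8]{BBF+} and \cite[Lemma 2.7]{BMS16}, so there is no in-paper argument to compare against. Your sketch is, in substance, a correct reconstruction of the standard proof from those sources, and it isolates the two ingredients that actually carry it: (a) the expansion of $\mu_{\alpha,\beta}$ in powers of $\alpha^{2}$, whose leading coefficient reproduces the comparison of $\mu_{H}$-slopes and whose constant term reproduces the comparison of $H\cdot\ch_{2}^{\beta}/H^{2}\cdot\ch_{1}^{\beta}$, i.e.\ exactly the lexicographic order defining $2$-$H$-Gieseker stability; and (b) the boundedness of the actual walls meeting the vertical ray over $\beta$, which upgrades ``(semi)stable for some large $\alpha$'' to ``for all large $\alpha$'' and justifies arguing with a single destabilizing sequence in the converse direction. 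One point of emphasis is slightly misplaced: to rule out $\CH_{\beta}^{-1}(E)\neq 0$ you do not need $\Delta_{H}\geq 0$ to control the growth of the quotient's slope. Any nonzero object of $\mathcal{F}_{\sigma_{H}}^{\beta}$ is torsion-free of positive rank, so the subobject $\CH_{\beta}^{-1}(E)[1]$ has tilt slope tending to $+\infty$ quadratically with strictly positive leading coefficient (or slope $+\infty$), while the hypothesis $\mu_{H}(E)>\beta$ together with $\ch_{1}^{\beta}(E)\geq 0$ forces the leading coefficient of $\mu_{\alpha,\beta}(E)$ to be nonpositive; the comparison is then immediate. The Bogomolov inequality is genuinely needed only for the wall-boundedness in (b) (and, in the same vein, purity follows because torsion subsheaves of dimension $1$ or $2$, and $0$-dimensional subsheaves, all have tilt slope bounded below or equal to $+\infty$, hence destabilize once $\mu_{\alpha,\beta}(E)\to-\infty$). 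With that adjustment your outline matches the cited proofs.
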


\begin{defn}\label{numerical-wall-def}
Let $v\in K(X)$.
\begin{enumerate}
  \item[(1)] A {\it numerical wall} for $v$ with respect to $w\in K(X)$ in tilt stability is a non-trivial proper subset in the upper half plane
$$
W(v,w):=  \{ (\alpha, \beta)\in \RN_{>0}\times \RN \mid \mu_{\alpha,\beta}(v)=\mu_{\alpha,\beta}(w)\}.
$$ 
  \item[(2)] A {\it chamber} for $v$ is a connected component in the complement of the union of numerical walls in the upper half plane. 
\end{enumerate}
\end{defn}

The numerical walls in tilt stability have a well-behaved wall and chamber structure (see e.g. \cite{Mac14a} for surfaces and \cite[Theorem 3.3]{Sch20a} for $3$-folds). 

\begin{thm}[]\label{tilt-wall-struct-thm}
Let $v \in K(X)$ with $\Delta_{H}(v) \geq 0$.
All numerical walls for $v$ in the $(\alpha,\beta)$-plane are as follows:
\begin{enumerate}
\item[(1)] A numerical wall for $v$ is either a semicircle centered at $\beta$-axis or a vertical line parallel to $\alpha$-axis.
\item[(2)] If $\ch_{0}(v) \neq 0$, then there is a unique vertical wall for $v$ given by $\beta=\mu_{H}(v)$. 
The remaining numerical walls for $v$ consist of two sets of nested semicircular walls whose apexes lie on the hyperbola $\mu_{\alpha,\beta}(v)=0$. 
\item[(3)] If $\ch_{0}(v) = 0$ and $H^{2} \cdot \ch_{1}(v) \neq 0$, 
then every numerical wall for $v$ is a semicircle whose top point lies on the vertical line $\beta=\frac{H \cdot \ch_{2}(v)}{H^{2} \cdot \ch_{1}(v)}$.
\end{enumerate}
\end{thm}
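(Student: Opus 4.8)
The plan is to reduce the whole statement to a single explicit equation for the locus $W(v,w)$ and then read off the geometry case by case. Write $v_i:=H^{3-i}\cdot\ch_i(v)$ and $w_i:=H^{3-i}\cdot\ch_i(w)$ for $i=0,1,2$, and set $\ell_{ij}:=v_iw_j-v_jw_i$. First I would substitute the twisted characters $\ch_0^{\beta}=\ch_0$, $H^2\cdot\ch_1^{\beta}=v_1-\beta v_0$, $H\cdot\ch_2^{\beta}=v_2-\beta v_1+\tfrac12\beta^2v_0$ into $\mu_{\alpha,\beta}(v)=\mu_{\alpha,\beta}(w)$ and clear denominators. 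A direct expansion shows the $\beta^3$- and $\alpha^2\beta$-terms cancel, leaving
\begin{equation*}
W(v,w)=\bigl\{\,(\alpha,\beta)\in\RN_{>0}\times\RN \;\big|\; \ell_{01}(\alpha^2+\beta^2)-2\ell_{02}\,\beta+2\ell_{12}=0\,\bigr\}.
\end{equation*}
This identity, together with the elementary Plücker-type relation $v_2\ell_{01}-v_1\ell_{02}+v_0\ell_{12}=0$, valid for every pair $v,w$, is what the entire proof rests on.

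For part (1) I would simply analyse this equation. If $\ell_{01}=0$ it is affine in $\beta$ and free of $\alpha$, so $W(v,w)$ is empty, the whole half-plane, or a vertical line $\beta=\ell_{12}/\ell_{02}$; only the last is a numerical wall. If $\ell_{01}\neq0$, completing the square rewrites it as $\alpha^2+(\beta-\ell_{02}/\ell_{01})^2=\ell_{02}^2/\ell_{01}^2-2\ell_{12}/\ell_{01}$, which is empty, a point on the $\beta$-axis, or a semicircle centred on the $\beta$-axis; again only the last is a wall.

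For part (2), assuming $\ch_0(v)\neq0$, i.e. $v_0\neq0$: in the vertical case $\ell_{01}=0$ the Plücker relation gives $v_0\ell_{12}=v_1\ell_{02}$, so the line is $\beta=\ell_{12}/\ell_{02}=v_1/v_0=\mu_H(v)$, and if in addition $\ell_{02}=0$ then $w$ is proportional to $v$ and no wall appears — hence exactly one vertical wall. In the semicircular case $\ell_{01}\neq0$, implicit differentiation of the defining equation locates the apex at $\beta_0=\ell_{02}/\ell_{01}$ with $\beta_0^2-\alpha_0^2=2\ell_{12}/\ell_{01}$; feeding this into the numerator of $\mu_{\alpha,\beta}(v)$ and using the Plücker relation once more gives $\mu_{\alpha_0,\beta_0}(v)=0$, so every apex lies on the curve $\mu_{\alpha,\beta}(v)=0$. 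Since $\Delta_H(v)\geq0$, that curve is the hyperbola $(\beta-\mu_H(v))^2-\alpha^2=\Delta_H(v)/v_0^2$, whose two branches are separated by the vertical wall $\beta=\mu_H(v)$, so the semicircular walls split into two families according to the branch containing their apex. Nestedness within a family I would deduce from the standard fact that two distinct walls for the same $v$ cannot meet at a point with $\alpha>0$: at such a point $Z_{\alpha,\beta}(v)$, $Z_{\alpha,\beta}(w)$ and $Z_{\alpha,\beta}(w')$ would all lie on one real line through the origin, which forces the two semicircles to coincide (cf. \cite{Mac14a} and \cite[Theorem 3.3]{Sch20a}). For part (3), assuming $\ch_0(v)=0$ and $H^2\cdot\ch_1(v)\neq0$: here $\ell_{01}=-v_1w_0$, so either $w_0=0$, in which case $\ell_{02}=-v_2w_0=0$ as well and the equation degenerates to $2\ell_{12}=0$ (no vertical wall), or $\ell_{01}\neq0$ and $W(v,w)$ is a semicircle whose centre, hence top point, is at $\beta_0=\ell_{02}/\ell_{01}=v_2/v_1=H\cdot\ch_2(v)/(H^2\cdot\ch_1(v))$, independently of $w$, which is the asserted vertical line.

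The main obstacle will be the nestedness assertion in (2): converting the heuristic "three aligned central charges'' into a rigorous proof that walls for a fixed $v$ never cross, and, more mundanely, discarding the degenerate sub-loci cleanly — empty sets, points, the whole half-plane, classes proportional to $v$, and the points where $\mathrm{Im}\,Z_{\alpha,\beta}$ vanishes — which is precisely where the hypothesis $\Delta_H(v)\geq0$ is needed. The algebraic reduction, the apex computation, and the identification of $\mu_{\alpha,\beta}(v)=0$ as a hyperbola are all routine once the displayed equation and the Plücker relation are in hand.
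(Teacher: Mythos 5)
Your proposal is correct. Note that the paper does not prove this theorem at all — it is quoted as a known result with a pointer to \cite{Mac14a} (surfaces) and \cite[Theorem 3.3]{Sch20a} (threefolds) — and your argument is essentially the standard proof found in those references: reduce $\mu_{\alpha,\beta}(v)=\mu_{\alpha,\beta}(w)$ to the quadratic equation $\ell_{01}(\alpha^2+\beta^2)-2\ell_{02}\beta+2\ell_{12}=0$ (your expansion and cancellation of the $\beta^3$, $\alpha^2\beta$ terms check out), use the Pl\"ucker relation $v_2\ell_{01}-v_1\ell_{02}+v_0\ell_{12}=0$ to locate the vertical wall at $\beta=\mu_H(v)$ and the apexes on $\mu_{\alpha,\beta}(v)=0$, and derive nestedness from the fact that distinct numerical walls for $v$ cannot meet. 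For that last point, your "three aligned central charges" heuristic does close up rigorously: the wall $W(v,w)$ depends only on the plane spanned by $v_{\leq 2},w_{\leq 2}$ in $\Lambda_H^2\otimes\RN\cong\RN^3$, and if $v,w,w'$ were linearly independent, collinearity of their central charges at a common point would force $\mathrm{Re}\,Z_{\alpha_0,\beta_0}$ and $\mathrm{Im}\,Z_{\alpha_0,\beta_0}$ to be proportional functionals on $\RN^3$, which is impossible since $\mathrm{Im}\,Z$ does not see $\ch_2$ while $\mathrm{Re}\,Z$ does; one should also note that the vertical line $\beta=\mu_H(v)$ is itself always a numerical wall (take $w_{\leq 2}=(0,0,1)$), so no semicircle can straddle it, which is what separates the two nested families.
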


One of the key techniques and the main difficult is to determine actual walls in tilt stability.

\begin{defn}\label{wall-def}
A numerical wall $W$ for $v\in K(X)$ is called an {\it actual wall} for $v$ if there exists a short exact sequence of $\sigma_{\alpha,\beta}$-semistable objects
\begin{equation}\label{def-wall-sequ}
\xymatrix@C=0.5cm{
0 \ar[r]^{} & A \ar[r]^{} & E \ar[r]^{} & B \ar[r]^{} & 0}
\end{equation}
in $\Coh^{\beta}(X)$ for one $(\alpha,\beta)\in W(E,A)$ such that $v=\ch(E)$ and $\mu_{\alpha,\beta}(A)=\mu_{\alpha,\beta}(E)$ define the numerical wall $W$ (i.e., $W=W(E,A)$). 
In particular, \eqref{def-wall-sequ} is called a {\it destabilizing exact sequence} and the pair $\langle A,B \rangle$ is called a {\it destabilizing pair}.
\end{defn}

\subsection{Bridgeland stability conditions}
In this subsection, we always assume that $X$ is a Fano $3$-fold of Picard number $1$.
To construct Bridgeland stability conditions on $\DC(X)$, we need the second tilting from tilt stability:
\begin{align*}
\mathcal{T}_{\sigma_{\alpha,\beta}}^{0} &:= 
\{ E \in \text{Coh}^{\beta}(X) \mid  \textrm{ any quotient $E\twoheadrightarrow G$ satisfies } \mu_{\alpha, \beta}(G) > 0 \}, \\
\mathcal{F}_{\sigma_{\alpha,\beta}}^{0} &:=
\{ E \in \text{Coh}^{\beta}(X) \mid \text{ for all subsheaf $F \hookrightarrow E$ satisfies } \mu_{\alpha, \beta}(F) \leq 0 \}. 
\end{align*}
The corresponding {\it tilted heart} is given by the extension closure
$$
\Coh^{\alpha,\beta}(X):=
\langle  \mathcal{T}_{\sigma_{\alpha,\beta}}^{0} , \mathcal{F}_{\sigma_{\alpha,\beta}}^{0}[1]\rangle.
$$
The new {\it central charge} is defined as 
$$
Z_{\alpha,\beta,s}(E)
:=-\ch_{3}^{\beta}(E)+(s+\frac{1}{6})\alpha^{2} H^{2}\cdot \ch_{1}^{\beta}(E)
+ i \big(H\cdot \ch_{2}^{\beta}(E)-\frac{1}{2}\alpha^{2} H^{3}\cdot \ch_{0}^{\beta}(E) \big),
$$ 
where $E\in \Coh^{\alpha,\beta}(X)$ and $s>0$. 
The {\it slope} is given by
$$
\lambda_{\alpha,\beta,s}(E)
:=\begin{cases}
-\frac{-\ch_{3}^{\beta}(E)+(s+\frac{1}{6})\alpha^{2} H^{2}\cdot \ch_{1}^{\beta}(E)}{H\cdot \ch_{2}^{\beta}(E)-\frac{1}{2}\alpha^{2} H^{3}\cdot \ch_{0}^{\beta}(E)}, & \mathrm{Im}\, (Z_{\alpha,\beta,s}(E))\neq 0\\
+\infty, & \mathrm{Im}\,(Z_{\alpha,\beta,s}(E))=0.
\end{cases}
$$
The following Bogomolov-type inequality (always called {\it BMT inequality}) is the key to show that the pair $(\Coh^{\alpha,\beta}(X),Z_{\alpha,\beta,s})$ is a Bridgeland stability condition on $\DC(X)$.

\begin{thm}\label{BMT-inequality}
If $E\in \DC(X)$ is a $\sigma_{\alpha,\beta}$-semistable object,
then
$$
Q_{\alpha,\beta}(E):=
\frac{\alpha^{2}+\beta^{2}}{2} \big( \CC_{1}^{2}-2\CC_{0}\CC_{2}\big)+\beta \big(3\CC_{0}\CC_{3}-\CC_{1}\CC_{2}\big)+\big(2\CC_{2}^{2}-3\CC_{1}\CC_{3}\big) \geq 0,
$$
where $\CC_{i}:=H^{3-i}\cdot \ch_{i}(E)$ for $i\in \{0,1,2,3\}$.
\end{thm}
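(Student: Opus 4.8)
The plan is to follow the argument of Bayer--Macr{\`{\i}}--Toda and Macr{\`{\i}}, peeling off a sequence of reductions until we reach a positivity statement that is specific to the geometry of $X$ and can be checked in a ``quiver region'' of the stability manifold. Throughout, by Proposition \ref{tilt-Bog-ineq} we may assume $E$ is $\sigma_{\alpha,\beta}$-semistable with $\Delta_H(E) = \CC_1^2 - 2\CC_0\CC_2 \ge 0$. First I would record the shape of the zero locus of $Q$: for a fixed class $w = \ch(E)$ with $\Delta_H(w) > 0$, dividing the equation $Q_{\alpha,\beta}(w)=0$ by $\tfrac12\Delta_H(w)$ exhibits it as a circle in the $(\alpha,\beta)$-plane centered on the $\beta$-axis, so $\{Q_{\alpha,\beta}(w) \ge 0\}$ is the closed exterior of a (possibly empty) semicircle $W_{Q}(w)$. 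When $\Delta_H(w) = 0$, the form $Q_{\alpha,\beta}(w)$ is affine-linear in $\beta$ and independent of $\alpha$, and a tilt-semistable object with vanishing discriminant is, up to shift, a $\mu_H$-semistable sheaf with $\Delta_H = 0$; for such objects the inequality is checked directly (or by a small perturbation into the region $\Delta_H>0$), so I may assume $\Delta_H(w) > 0$.

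Next I would reduce to $E$ being $\sigma_{\alpha,\beta}$-stable. Polarizing $Q_{\alpha,\beta}$ to a symmetric bilinear form $Q_{\alpha,\beta}(-,-)$, one uses the standard fact that if two classes have central charges that are positive real multiples of one another (equal tilt slope, positive imaginary part) then $Q_{\alpha,\beta}(-,-)$ is non-negative on that pair; applying this to the Jordan--H\"older factors $E_i$ of $E$ on the relevant wall gives $Q_{\alpha,\beta}(E) = \sum_i Q_{\alpha,\beta}(E_i) + \sum_{i \ne j} Q_{\alpha,\beta}(E_i, E_j) \ge \sum_i Q_{\alpha,\beta}(E_i)$, so it suffices to treat stable $E$. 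Now argue by induction on the discriminant $\Delta_H(E)$, which takes non-negative values in a discrete set. Suppose $E$ is $\sigma_{\alpha_0,\beta_0}$-stable but $Q_{\alpha_0,\beta_0}(E) < 0$, i.e.\ $(\alpha_0,\beta_0)$ lies strictly inside $W_{Q}(E)$. Using the local finiteness and nested structure of tilt-walls for $\ch(E)$ (Theorem \ref{tilt-wall-struct-thm}), move $(\alpha,\beta)$ inside the open disk bounded by $W_{Q}(E)$: either $E$ meets an actual wall, where it is destabilized by some $0 \to A \to E \to B \to 0$ with $A,B$ tilt-semistable of strictly smaller discriminant, and then the bilinear identity forces $Q(A)<0$ or $Q(B)<0$ at that point, contradicting the inductive hypothesis; or $E$ stays stable throughout the disk, and we are reduced to the ``critical'' configuration on $\partial W_{Q}(E)$, or on the vertical wall $\beta = \mu_H(E)$.

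The main obstacle is this last case, where the global geometry of $X$ becomes unavoidable. Here I would transport the problem into a region of the $(\alpha,\beta)$-plane in which $\Coh^{\alpha,\beta}(X)$ becomes equivalent to (a twist of) the category of finite-dimensional modules over the finite-dimensional algebra attached to a full strong exceptional collection on $X$; for $X = \PB^{3}$ one may use $\CO, \CO(1), \CO(2), \CO(3)$. A $\sigma_{\alpha,\beta}$-stable object whose parameters are driven into that quiver region has bounded cohomology with tightly constrained ranks, and $Q_{\alpha,\beta} \ge 0$ then translates into an elementary positivity statement among these ranks --- a Cauchy--Schwarz/Hodge-index inequality --- together with the classical Bogomolov--Gieseker inequality on the hyperplane surfaces, accessed by relating large-volume tilt-semistable objects to $2$-$H$-Gieseker-semistable sheaves via Proposition \ref{limit-tilt-stab-Gie-stab-prop}. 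Verifying that this positivity holds for every relevant class, and that one can always deform into the quiver region without producing uncontrolled walls, is the technical heart of the proof; it is precisely what \cite{BMT14,Mac14b} establish for $\PB^{3}$, with extensions to all Fano threefolds of Picard number one in the subsequent literature. Combined with the reductions above, this yields $Q_{\alpha,\beta}(E) \ge 0$ for every $\sigma_{\alpha,\beta}$-semistable object $E$.
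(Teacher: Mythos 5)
The paper offers no proof of this theorem beyond a citation: \cite{Mac14b} for $\PB^{3}$, \cite{Sch14} for the quadric threefold, and \cite{Li19} for general Fano threefolds of Picard number one. Your sketch is an accurate outline of the strategy of those references (reduction to stable objects via the polarized form, induction on the discriminant through the nested wall structure, and the final positivity check in the exceptional-collection/quiver region), and since you explicitly defer that technical core to the same citations, your treatment is in substance the same as the paper's.
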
  

\begin{proof}
See \cite[Theorem 1.1]{Mac14b} for $\PB^{3}$, 
\cite{Sch14} for quadric $3$-fold and \cite{Li19} for the general cases.
\end{proof}

Under the BMT inequality,
the existence of Bridgeland stability conditions on $\DC(X)$
was established in \cite{BMT14};
see \cite[Section 8]{BMS16} for more detailed discussions.

\begin{thm}[{\cite{BMT14,BMS16}}]
For any $s>0$, the pair $(\Coh^{\alpha,\beta}(X),Z_{\alpha,\beta,s})$ is a Bridgeland stability condition with the support property.
Moreover, the function 
$$
\begin{array}{cccl}
& \RN_{>0}\times \RN\times\RN_{>0}  &\longrightarrow& \Stab(\DC(X)) \\
&(\alpha,\beta,s) &\longmapsto&  (\Coh^{\alpha,\beta}(X),Z_{\alpha,\beta,s})
\end{array}
$$
is continuous. 
\end{thm}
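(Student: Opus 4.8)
The statement to be proved is that for any $s>0$, the pair $(\Coh^{\alpha,\beta}(X),Z_{\alpha,\beta,s})$ is a Bridgeland stability condition on $\DC(X)$ with the support property, and that the assignment $(\alpha,\beta,s)\mapsto(\Coh^{\alpha,\beta}(X),Z_{\alpha,\beta,s})$ is a continuous map to $\Stab(\DC(X))$.

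The plan is to follow the standard two-step "double tilt" construction of Bayer--Macr\`i--Toda, invoking the results already assembled in the excerpt. First I would check that $(\Coh^{\alpha,\beta}(X),Z_{\alpha,\beta,s})$ satisfies the axioms of a weak central charge: since $\Coh^{\alpha,\beta}(X)$ is obtained by tilting the heart $\Coh^{\beta}(X)$ at the torsion pair $(\mathcal{T}^{0}_{\sigma_{\alpha,\beta}},\mathcal{F}^{0}_{\sigma_{\alpha,\beta}})$ associated to tilt slope $0$, the objects of $\mathcal{T}^{0}_{\sigma_{\alpha,\beta}}$ have $\mathrm{Im}\,Z_{\alpha,\beta,s}\geq 0$ (they have nonnegative $H\cdot\ch_{2}^{\beta}-\tfrac12\alpha^{2}H^{3}\cdot\ch_{0}^{\beta}$, which is $\mathrm{Im}\,Z_{\alpha,\beta}$ from the previous tilt), while objects of $\mathcal{F}^{0}_{\sigma_{\alpha,\beta}}[1]$ have $\mathrm{Im}\,Z_{\alpha,\beta,s}\leq 0$ after the shift would be wrong sign — so the precise bookkeeping is that $Z_{\alpha,\beta,s}$ has been rotated so that imaginary part comes from the old real part; one verifies positivity on the heart and that objects with $\mathrm{Im}=0$ have negative real part, using that on the $\mathrm{Im}=0$ locus the relevant objects are (shifts of) tilt-semistable objects of tilt slope $0$ or $\infty$ and $-\ch_{3}^{\beta}$ is controlled by the BMT inequality of Theorem \ref{BMT-inequality}. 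Next I would establish the Harder--Narasimhan property; here I would cite the general criterion (e.g. \cite[Proposition 2.12]{BLMS23} or the original \cite{BMT14,BMS16}) that a weak stability function on a noetherian heart with discrete image satisfying the support property automatically has HN filtrations, so this reduces to the support property.

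The heart of the argument, and the main obstacle, is the support property: one needs a quadratic form $Q$ on $\Lambda^{3}_{H}\otimes\RN$, negative definite on $\ker Z_{\alpha,\beta,s}$, with $Q(E)\geq 0$ for every $\lambda_{\alpha,\beta,s}$-semistable object $E$. This is exactly where Theorem \ref{BMT-inequality} enters: the quadratic form $Q_{\alpha,\beta}$ appearing there, evaluated on a Bridgeland-semistable object $E$, is shown to be nonnegative by a standard argument. The idea is that if $E$ is $\lambda_{\alpha,\beta,s}$-semistable, then either $E$ (or a cohomology object of $E$ with respect to $\Coh^{\beta}(X)$) is $\sigma_{\alpha,\beta}$-semistable — in which case $Q_{\alpha,\beta}(E)\geq 0$ is precisely Theorem \ref{BMT-inequality} — or $E$ fits into an exact triangle whose outer terms have controllable $Q_{\alpha,\beta}$, and one checks that $Q_{\alpha,\beta}$ behaves subadditively/convexly along such triangles (a Hodge-index-type inequality for the associated quadratic form). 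One then verifies by a direct linear-algebra computation that $Q_{\alpha,\beta}$ restricted to $\ker Z_{\alpha,\beta,s}$ is negative definite for all $(\alpha,\beta,s)$ with $\alpha>0$, $s>0$; the $s$-dependence only shifts one coefficient and does not affect the sign, which is the point of allowing arbitrary $s>0$. This closes the support property and hence, with the HN property, shows $(\Coh^{\alpha,\beta}(X),Z_{\alpha,\beta,s})$ is a Bridgeland stability condition.

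Finally, for continuity of $(\alpha,\beta,s)\mapsto(\Coh^{\alpha,\beta}(X),Z_{\alpha,\beta,s})$ into $\Stab(\DC(X))$, I would use Bridgeland's deformation theorem \cite[Theorem 1.2]{Bri07}: continuity of a family of stability conditions follows from continuity of the central charges as maps $\Lambda^{3}_{H}\to\CN$ together with a uniform support property on compact subsets of the parameter space. The central charges $Z_{\alpha,\beta,s}$ depend polynomially (hence continuously) on $(\alpha,\beta,s)$ by inspection of their defining formula, and the quadratic forms $Q_{\alpha,\beta}$ likewise vary continuously and remain negative definite on the kernels locally uniformly, so the family is continuous. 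I would remark that all of this is established in \cite{BMT14} and in the more detailed treatment of \cite[Section 8]{BMS16}, so the proof amounts to assembling these citations together with Theorem \ref{BMT-inequality}; the only nontrivial input specific to $X$ a Fano $3$-fold of Picard number $1$ is precisely the BMT inequality, which is available by the proof of Theorem \ref{BMT-inequality} above.
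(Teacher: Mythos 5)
The paper does not actually prove this theorem: it is quoted from \cite{BMT14,BMS16}, and your outline correctly identifies the route taken there (positivity of $Z_{\alpha,\beta,s}$ on the double-tilted heart via the BMT inequality, the HN property, the support property via a quadratic form, and continuity via the deformation theorem). However, the one step you spell out in concrete terms --- that ``one verifies by a direct linear-algebra computation that $Q_{\alpha,\beta}$ restricted to $\ker Z_{\alpha,\beta,s}$ is negative definite for all $s>0$'' --- is false as stated. Take $\beta=0$ (the general case follows by twisting) and write $C_{i}=H^{3-i}\ch_{i}$. The kernel of $Z_{\alpha,0,s}$ is cut out by $C_{3}=(s+\frac{1}{6})\alpha^{2}C_{1}$ and $C_{2}=\frac{\alpha^{2}}{2}C_{0}$, and substituting into the paper's $Q_{\alpha,0}=\frac{\alpha^{2}}{2}(C_{1}^{2}-2C_{0}C_{2})+2C_{2}^{2}-3C_{1}C_{3}$ gives
$$
Q_{\alpha,0}\big|_{\ker Z_{\alpha,0,s}}=-3s\alpha^{2}C_{1}^{2},
$$
which is only negative \emph{semi}definite: it vanishes identically on the line $C_{1}=C_{3}=0$, $C_{2}=\frac{\alpha^{2}}{2}C_{0}$, for instance on the class $(1,0,\frac{\alpha^{2}}{2},0)$. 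So $Q_{\alpha,\beta}$ alone does not witness the support property for any $s$, and along the degenerate direction the parameter $s$ does not enter at all --- contrary to your remark that the $s$-dependence ``does not affect the sign.''

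The fix, and what \cite[Section 8]{BMS16} actually does, is to use a positive combination $Q_{\alpha,\beta}+\epsilon\Delta_{H}$. On the kernel this equals $(\epsilon-3s\alpha^{2})C_{1}^{2}-\epsilon\alpha^{2}C_{0}^{2}$, which is negative definite precisely when $0<\epsilon<3s\alpha^{2}$; this is where the hypothesis $s>0$ is genuinely used. Since both $Q_{\alpha,\beta}\geq 0$ (Theorem \ref{BMT-inequality}) and $\Delta_{H}\geq 0$ (Proposition \ref{tilt-Bog-ineq}) hold on tilt-semistable objects, the combination remains nonnegative on $\lambda_{\alpha,\beta,s}$-semistable objects after running the filtration/convexity argument you sketch. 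With this correction the remainder of your outline --- positivity on the heart via the BMT bound on $\ch_{3}^{\beta}$, the HN property, and continuity from locally uniform support --- matches the cited proof.
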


The existence for the moduli spaces of Bridgeland semistable objects is guaranteed by the following result.

\begin{thm}
Let $\sigma$ be a Bridgeland stability condition on $\DC(X)$.
Then the moduli stack $\mathcal{M}_{\sigma}(v)$ of $\sigma$-semistable objects with Chern character $v$ is an algebraic stack of finite type over $\CN$.
If there exist no strictly semistable objects, 
then $\mathcal{M}_{\sigma}(v)$ is a $\mathbb{G}_{m}$-gerbe over its coarse moduli space $M_{\sigma}(v)$, which is a proper algebraic space over $\CN$.
\end{thm}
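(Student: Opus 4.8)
The plan is to deduce this from three standard ingredients: algebraicity of the stack of objects in $\DC(X)$, boundedness of the family of $\sigma$-semistable objects of fixed Chern character, and a Langton-type argument for the valuative criterion of properness. The same strategy was carried out by Toda for K3 surfaces and by Piyaratne--Toda for $3$-folds (and, for $\PB^{3}$, is close to arguments of Macr\`i), and I would follow that template, working in the standing setting where $X$ is a Fano $3$-fold of Picard number $1$.

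First I would recall that, by Lieblich's theorem on moduli of complexes, the stack parameterizing objects $E\in\DC(X)$ with $\Hom^{<0}(E,E)=0$ is an algebraic stack locally of finite type over $\CN$. Since $\Coh^{\alpha,\beta}(X)$ is obtained from $\Coh(X)$ by two successive tilts, every one of its objects has cohomological amplitude in $[-2,0]$ with respect to the standard $t$-structure; combining this with the support property one checks that membership in the heart $\Coh^{\alpha,\beta}(X)$ is an open condition in flat families, so the stack of flat families of objects in the heart is an open substack of Lieblich's stack, and fixing $\ch=v$ cuts out an open-and-closed substack of it.

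Next I would establish openness of $\sigma$-semistability in families and reduce finite type to a boundedness statement. Here one uses that $\Coh^{\alpha,\beta}(X)$ is Noetherian — a consequence of the support property — and that Harder--Narasimhan filtrations vary well in families in the sense of Abramovich--Polishchuk; this makes the $\sigma$-semistable locus open, so $\mathcal{M}_{\sigma}(v)$ is an open substack. To conclude it is of finite type it remains to show that the family of $\sigma$-semistable objects with $\ch=v$ is bounded: one invokes the BMT inequality (Theorem \ref{BMT-inequality}) and the support property to constrain the numerical classes of all subobjects and of all Harder--Narasimhan and Jordan--H\"older factors of such an $E$ to a finite set, and then runs an induction on the heart reducing to boundedness of torsion-free sheaves on $X$, which is classical. \textbf{This boundedness step is the main obstacle}; the remaining points are formal once it is in place.

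Finally I would treat the coarse moduli space. If there are no strictly semistable objects with Chern character $v$, then every $\sigma$-semistable $E$ is stable and hence simple: $\Hom(E,E)$ is a finite-dimensional division algebra over $\CN$, so equals $\CN$, and the only automorphisms are scalars. Thus every point of $\mathcal{M}_{\sigma}(v)$ has automorphism group $\mathbb{G}_{m}$, and the rigidification construction exhibits $\mathcal{M}_{\sigma}(v)$ as a $\mathbb{G}_{m}$-gerbe over an algebraic space $M_{\sigma}(v)$ of finite type over $\CN$. For properness I would verify the valuative criterion: given a DVR $R$ with fraction field $K$ and a $\sigma$-stable object $E_{K}$ over $\mathrm{Spec}(K)$ with $\ch(E_{K})=v$, spread out to an $R$-flat family of objects of $\DC(X_{R})$ extending $E_{K}$, and then — following the Langton-type modification argument of Abramovich--Polishchuk, refined by Piyaratne--Toda — successively alter the family along the central fiber, first to bring it into the heart $\Coh^{\alpha,\beta}(X)$ and then to remove destabilizing subobjects, using Noetherianity of the heart to show the process terminates. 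The resulting central fiber is $\sigma$-semistable, hence stable by hypothesis, and is unique up to the $\mathbb{G}_{m}$-gerbe by stability; this yields separatedness and properness of $M_{\sigma}(v)$, completing the argument.
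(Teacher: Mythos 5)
Your proposal is correct and follows exactly the route the paper takes: the paper does not prove this theorem itself but simply cites Piyaratne--Toda (\cite{PT19}) and Bayer--Lahoz--Macr{\`{\i}}--Nuer--Perry--Stellari (\cite{BLM+21}), and your sketch is a faithful outline of the Piyaratne--Toda argument (Lieblich's stack and openness of the heart and of semistability, boundedness via the BMT inequality and the support property, rigidification to a $\mathbb{G}_{m}$-gerbe, and the Langton/Abramovich--Polishchuk elementary-modification argument for properness). You also correctly single out boundedness as the genuinely hard step, which is where the bulk of the work in \cite{PT19} lies.
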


\begin{proof}
See \cite[Theorem 1.2]{PT19} for smooth projective $3$-folds satisfying the BMT inequality and \cite[Theorem 21.24]{BLM+21} for general cases.
\end{proof}

Analogous to tilt stability,
the notions of the numerical wall, actual wall and chamber in Bridgeland stability conditions are defined as follows:

\begin{defn}
Let $v,w\in K(X)$ be two non-parallel classes.
\begin{enumerate}
\item[(1)] A {\it numerical wall} of $v$ in Bridgeland stability with respect to $w$  is a non-trivial proper subset of the stability space, which is defined as
$$
W_{s}(v,w)
=\{(\alpha,\beta)\in \RN_{>0}\times \RN \mid \lambda_{\alpha,\beta,s}(v)=\lambda_{\alpha,\beta,s}(w)\};
$$
\item[(2)] An {\it actual wall} of $v$ is the subset of a numerical wall consisting of those points for which there exist properly semistable objects with class $v$;
\item[(3)] A {\it chamber} is defined to be a connected component of the complement of the set of actual walls.
\end{enumerate}
\end{defn}

The following result indicates a close relationship between tilt stability $\sigma_{\alpha,\beta}$ and Bridgeland stability $\lambda_{\alpha,\beta,s}$.

\begin{lem}[{\cite[Lemma 8.9]{BMS16},\cite[Lemma 3.5]{Rez24b}}]\label{tilt-stab-Bri-stab}
Let $E\in \Coh^{\alpha,\beta}(X)$ be $\lambda_{\alpha,\beta,s}$-semistable for all $s\gg 1$ sufficiently big.
Then one of the following conditions holds:
\begin{enumerate}
  \item[(i)] $E\cong \CH_{\beta}^{0}(E)$ is $\sigma_{\alpha,\beta}$-semistable;
  \item[(ii)] $ \CH_{\beta}^{-1}(E)$ is $\sigma_{\alpha,\beta}$-semistable and $ \CH_{\beta}^{0}(E)$ is supported in dimension $0$.
\end{enumerate}
If moreover, $\CH_{\beta}^{-1}(E)$ is $\sigma_{\alpha,\beta}$-stable, 
$ \CH_{\beta}^{0}(E)$ is a torsion sheaf supported in dimension $0$, 
and $\Hom(\CO_{x},E)=0$ for all $x\in X$, 
then $E$ is $\lambda_{\alpha,\beta,s}$-stable for all $s\gg 1$.
\end{lem}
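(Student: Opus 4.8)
\textbf{Proof plan for Lemma \ref{tilt-stab-Bri-stab}.}
The plan is to exploit the fact that as $s\to+\infty$ the central charge $Z_{\alpha,\beta,s}$ degenerates, in a controlled way, to the tilt-stability central charge $Z_{\alpha,\beta}$ (up to rotation and rescaling), so that $\lambda_{\alpha,\beta,s}$-semistability of $E$ forces compatibility with the tilt slope $\mu_{\alpha,\beta}$ of its cohomology objects. Concretely, I would first analyze which objects of $\Coh^{\alpha,\beta}(X)$ can appear: an object $E\in\Coh^{\alpha,\beta}(X)$ sits in an exact triangle assembling $\CH^0_{\beta}(E)\in\mathcal{T}^0_{\sigma_{\alpha,\beta}}$ and $\CH^{-1}_{\beta}(E)[1]$ with $\CH^{-1}_{\beta}(E)\in\mathcal{F}^0_{\sigma_{\alpha,\beta}}$. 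The imaginary part $\mathrm{Im}\,Z_{\alpha,\beta,s}=H\cdot\ch^{\beta}_2-\tfrac12\alpha^2 H^3\cdot\ch^{\beta}_0$ is independent of $s$, so the phase filtration interacts with tilt stability in a rigid way; objects with $\mathrm{Im}\,Z_{\alpha,\beta,s}=0$ are precisely those of tilt slope $+\infty$, i.e. (shifts of) sheaves supported in dimension $0$ together with the relevant torsion/free pieces.

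Next I would run the standard ``large $s$'' argument. Suppose $\CH^{-1}_{\beta}(E)\neq 0$. If $\CH^{-1}_{\beta}(E)$ were not $\sigma_{\alpha,\beta}$-semistable, its maximal destabilizing subobject $A\subset\CH^{-1}_{\beta}(E)$ with $\mu_{\alpha,\beta}(A)>\mu_{\alpha,\beta}(\CH^{-1}_{\beta}(E))$ would give, after shifting, a subobject or quotient of $E$ in $\Coh^{\alpha,\beta}(X)$ whose $\lambda_{\alpha,\beta,s}$-slope violates the semistability of $E$ once $s$ is large enough — because for $s\gg 1$ the ordering of $\lambda$-slopes among objects of distinct tilt slopes is governed by $\mu_{\alpha,\beta}$, the $\ch_3$-term being lower order. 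A symmetric argument controls $\CH^0_{\beta}(E)$: if $\CH^0_{\beta}(E)$ had a quotient of positive tilt slope that is not of slope $+\infty$, one again produces a $\lambda$-destabilizer for $s\gg1$; hence $\CH^0_{\beta}(E)$ is either $0$ (case (i)) or supported in dimension $0$ (case (ii)). Combining, either $E\cong\CH^0_{\beta}(E)$ is a $\sigma_{\alpha,\beta}$-semistable sheaf, or $\CH^{-1}_{\beta}(E)$ is $\sigma_{\alpha,\beta}$-semistable and $\CH^0_{\beta}(E)$ is $0$-dimensional. The fact that ``semistable for all $s\gg1$'' (rather than a single $s$) is assumed is what lets me take the limit uniformly.

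For the final ``moreover'' clause, I would upgrade semistability to stability under the extra hypotheses. Assume $\CH^{-1}_{\beta}(E)$ is $\sigma_{\alpha,\beta}$-stable, $Q:=\CH^0_{\beta}(E)$ is $0$-dimensional, and $\Hom(\CO_x,E)=0$ for all $x$. A potential $\lambda_{\alpha,\beta,s}$-subobject $F\subset E$ of equal slope would, by the first part applied to $F$ and to $E/F$, have cohomology sheaves that are themselves tilt-semistable of the matching tilt slope; the stability of $\CH^{-1}_{\beta}(E)$ forces $\CH^{-1}_{\beta}(F)$ to be either $0$ or all of $\CH^{-1}_{\beta}(E)$. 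In the first case $F$ is $0$-dimensional, so $F$ or a subsheaf of it is some $\CO_x$, contradicting $\Hom(\CO_x,E)=0$; in the second case $E/F$ is $0$-dimensional, and then $\Hom(\CO_x,E)=0$ together with the structure of the triangle $\CH^{-1}_{\beta}(E)[1]\to E\to Q$ forces $E/F=0$. One has to be a little careful that a $0$-dimensional subobject of $E$ in $\Coh^{\alpha,\beta}(X)$ really does receive a map from some $\CO_x$: this follows because $0$-dimensional sheaves lie in the heart and have finite length with simple factors $\CO_x$. This last point, disentangling subobjects in the tilted heart $\Coh^{\alpha,\beta}(X)$ from honest subsheaves and correctly tracking the two cohomology objects through the destabilizing sequence, is the main obstacle; the rest is the routine ``$\ch_3$ is lower order as $s\to\infty$'' estimate. (Since this is a cited lemma, in the paper one may simply invoke \cite[Lemma 8.9]{BMS16} and \cite[Lemma 3.5]{Rez24b}; the above is the argument those references carry out.)
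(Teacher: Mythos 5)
The paper offers no proof of this lemma at all: it is imported verbatim with the citations \cite[Lemma 8.9]{BMS16} and \cite[Lemma 3.5]{Rez24b}, so there is no in-paper argument to compare yours against. Your outline is exactly the standard large-$s$ degeneration argument that those references carry out — $\mathrm{Im}\,Z_{\alpha,\beta,s}$ is independent of $s$ while $\mathrm{Re}\,Z_{\alpha,\beta,s}$ is dominated by $s\alpha^{2}H^{2}\cdot\ch_{1}^{\beta}$, so tilt-destabilizers of $\CH_{\beta}^{-1}(E)$ or $\CH_{\beta}^{0}(E)$ transfer to $\lambda_{\alpha,\beta,s}$-destabilizers for $s\gg1$, and the extra hypotheses rule out equal-slope subobjects in the ``moreover'' clause — and it is correct in approach. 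One small imprecision worth fixing when writing it out: the objects of $\Coh^{\alpha,\beta}(X)$ killed by $\mathrm{Im}\,Z_{\alpha,\beta,s}$ are not exactly those of tilt slope $+\infty$; on the $\mathcal{F}^{0}_{\sigma_{\alpha,\beta}}[1]$ side they are shifts of objects of tilt slope $0$, which is why case (ii) allows $\CH_{\beta}^{-1}(E)$ of slope $0$ rather than only $0$-dimensional pieces — but this does not affect the structure of the argument.
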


In particular,  
by Lemma \ref{tilt-stab-Bri-stab},  the following result holds.

\begin{cor}\label{tilt-stab-Bri-stab-cor}
\begin{enumerate}
\item[(1)] If $E$ is $\lambda_{\alpha,\beta,s}$-semistable with $\mu_{\alpha,\beta}(E)>0$ and $\ch_{1}^{\beta}(E)>0$, then $\CH^{-1}_{\beta}(E)=0$ and $\CH^{0}_{\beta}(E)$ is $\sigma_{\alpha,\beta}$-semistable.
\item[(2)] If $E$ is $\lambda_{\alpha,\beta,s}$-semistable with $\mu_{\alpha,\beta}(E)<0$ and $\ch_{1}^{\beta}(E)>0$, then $\CH^{0}_{\beta}(E)$ is $\sigma_{\alpha,\beta}$-semistable and $\CH^{1}_{\beta}(E)$ is 0 or a torsion sheaf with $0$-dimensional support.
\end{enumerate}
\end{cor}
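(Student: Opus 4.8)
The plan is to deduce Corollary \ref{tilt-stab-Bri-stab-cor} directly from Lemma \ref{tilt-stab-Bri-stab} by carefully tracking the position of the tilt slope $\mu_{\alpha,\beta}(E)$ and the sign of $\ch_1^\beta(E)$ across the two alternatives (i) and (ii) of the lemma. Recall first that for an object $E\in\Coh^{\alpha,\beta}(X)$ one has the cohomology exact sequence relating $\CH^{-1}_\beta(E)$, $\CH^0_\beta(E)$ (in $\Coh^\beta(X)$) with the sheaf cohomologies, and that for any object $F\in\Coh^\beta(X)$ one has $H^2\cdot\ch_1^\beta(F)\ge 0$, with equality iff $F$ is supported in dimension $\le 1$ with small support; in particular $\ch_1^\beta$ is additive on short exact sequences in $\Coh^\beta(X)$. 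The hypothesis $\ch_1^\beta(E)>0$ will be used to rule out the degenerate pieces.

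For part (1): I would assume $E$ is $\lambda_{\alpha,\beta,s}$-semistable for $s\gg 1$, so Lemma \ref{tilt-stab-Bri-stab} applies. In case (i) we immediately get $E\cong\CH^0_\beta(E)$ is $\sigma_{\alpha,\beta}$-semistable and $\CH^{-1}_\beta(E)=0$, which is exactly the claim. So it suffices to exclude case (ii). In case (ii), $\CH^0_\beta(E)$ is supported in dimension $0$, hence $\ch_1^\beta(\CH^0_\beta(E))=0$ and $\ch_2^\beta\le 0$ and, being $0$-dimensional, $\ch^\beta_{\le 1}(\CH^0_\beta(E))=0$; meanwhile $\CH^{-1}_\beta(E)$ is $\sigma_{\alpha,\beta}$-semistable. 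The triangle $\CH^{-1}_\beta(E)[1]\to E\to \CH^0_\beta(E)$ gives $\ch^\beta(E)=\ch^\beta(\CH^0_\beta(E))-\ch^\beta(\CH^{-1}_\beta(E))$, so $\ch_1^\beta(E)=-\ch_1^\beta(\CH^{-1}_\beta(E))\le 0$, contradicting $\ch_1^\beta(E)>0$. (If $\CH^{-1}_\beta(E)=0$ as well then $E$ is $0$-dimensional and $\ch_1^\beta(E)=0$, again a contradiction.) Hence case (ii) cannot occur and we are in case (i); the hypothesis $\mu_{\alpha,\beta}(E)>0$ is the natural sign condition that makes case (i) consistent and is recorded for later use in the wall-crossing analysis, but the exclusion of (ii) is already forced by $\ch_1^\beta(E)>0$.

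For part (2): again apply Lemma \ref{tilt-stab-Bri-stab}. Here the sign condition $\mu_{\alpha,\beta}(E)<0$ together with $\ch_1^\beta(E)>0$ (so $\mathrm{Im}\,Z_{\alpha,\beta}(E)=H^2\cdot\ch_1^\beta(E)>0$ and $\mu_{\alpha,\beta}$ is a genuine finite slope) is what pushes us into case (ii): in case (i) one would have $E=\CH^0_\beta(E)$ a $\sigma_{\alpha,\beta}$-semistable sheaf-like object, and then I would argue — using that a $\sigma_{\alpha,\beta}$-semistable object with $\ch_1^\beta>0$ and negative tilt slope forces either a contradiction with the Bogomolov inequality $\Delta_H(E)\ge 0$ of Proposition \ref{tilt-Bog-ineq} or, more precisely, that such $E$ would have to contain a destabilizing subobject — that case (i) is incompatible with $\mu_{\alpha,\beta}(E)<0$ in the region under consideration, leaving case (ii). Granting case (ii), translate the statement of the lemma back to sheaf cohomology: $\CH^0_\beta(E)$ supported in dimension $0$ means that writing $E$ via its cohomology in $\Coh(X)$ rather than $\Coh^\beta(X)$, one has $H^{-1}(E)$ (shifted) giving the $\sigma_{\alpha,\beta}$-semistable piece and $H^0(E)$ up to a $0$-dimensional correction, so $\CH^0_\beta(E)$ (in the earlier-$t$-structure indexing used in the statement, i.e. what the paper writes $\CH^0_\beta$/$\CH^1_\beta$) is $\sigma_{\alpha,\beta}$-semistable and $\CH^1_\beta(E)$ is $0$ or a torsion sheaf with $0$-dimensional support. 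The main obstacle I anticipate is precisely the bookkeeping in this last paragraph: matching the two indexing conventions for the cohomology objects $\CH^i_\beta$ (shift by one between the "$E\in\Coh^{\alpha,\beta}$ written as a two-term complex over $\Coh^\beta$" picture of Lemma \ref{tilt-stab-Bri-stab} and the "$\CH^0_\beta(E),\CH^1_\beta(E)$" picture in the statement of part (2)) and checking that the sign hypotheses genuinely select the claimed case rather than merely being consistent with it; the numerical inputs ($\ch_1^\beta$ additivity, $\Delta_H\ge 0$, the $0$-dimensional vanishing $\ch^\beta_{\le 1}=0$) are routine once the conventions are pinned down.
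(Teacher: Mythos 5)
The paper itself offers no argument beyond ``by Lemma \ref{tilt-stab-Bri-stab}'', so the only question is whether your derivation from that lemma is sound. Your part (1) is correct and is the intended argument: in case (ii) of the lemma one has $\ch_1^{\beta}(E)=\ch_1^{\beta}(\CH^{0}_{\beta}(E))-\ch_1^{\beta}(\CH^{-1}_{\beta}(E))=-\ch_1^{\beta}(\CH^{-1}_{\beta}(E))\leq 0$, since the $0$-dimensional piece contributes nothing to $\ch_1^{\beta}$ and $\CH^{-1}_{\beta}(E)\in\Coh^{\beta}(X)$ has $\ch_1^{\beta}\geq 0$; this contradicts $\ch_1^{\beta}(E)>0$, so case (i) holds.

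Part (2) has a genuine gap. Your proposed mechanism for excluding case (i) --- that a $\sigma_{\alpha,\beta}$-semistable object with $\ch_1^{\beta}>0$ and $\mu_{\alpha,\beta}<0$ ``forces a contradiction with the Bogomolov inequality'' or ``would have to contain a destabilizing subobject'' --- is false: such objects exist in abundance (e.g.\ $\CO(-k)$ for $\beta>-k$), and $\Delta_H\geq 0$ says nothing about the sign of the tilt slope. The step you defer as ``bookkeeping'' is in fact the whole content of part (2). Since $\mu_{\alpha,\beta}(E)<0$ and $\ch_1^{\beta}(E)>0$ give $\mathrm{Im}\,Z_{\alpha,\beta,s}(E)=H\cdot\ch_2^{\beta}(E)-\tfrac{\alpha^2}{2}H^{3}\cdot\ch_0^{\beta}(E)<0$, the object lying in the heart $\Coh^{\alpha,\beta}(X)$ is $E[1]$, not $E$; this is precisely why the conclusion of (2) is phrased in terms of $\CH^{0}_{\beta}(E)=\CH^{-1}_{\beta}(E[1])$ and $\CH^{1}_{\beta}(E)=\CH^{0}_{\beta}(E[1])$ (compare the use of the corollary in the proof of Proposition \ref{Bri-mod=PT-mod}). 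One must therefore apply Lemma \ref{tilt-stab-Bri-stab} to $E[1]$. Its case (i) would give $E[1]\cong\CH^{0}_{\beta}(E[1])\in\Coh^{\beta}(X)$, which is impossible because $\ch_1^{\beta}(E[1])=-\ch_1^{\beta}(E)<0$ while every object of $\Coh^{\beta}(X)$ has $\ch_1^{\beta}\geq 0$. Hence case (ii) holds for $E[1]$, which is literally the assertion of part (2). With this substitution your argument closes; as written, the exclusion of case (i) does not go through.
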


%===================================================================== 

\subsection{Moduli spaces of stable pairs}

In \cite{PT09}, Pandharipande--Thomas  introduced a theory of curve-counting via stable pairs.

\begin{defn}\label{stab-pair-def}
Let $X$ be a smooth projective $3$-fold.
A {\it stable pair} on $X$ is a pair $(F,s)$ satisfying the following conditions:
\begin{itemize}
  \item[(i)] $F$ is a $1$-dimensional pure sheaf, i.e., there are no $0$-dimensional subsheaves; 
  \item[(ii)] $s: \CO_{X} \rightarrow F$ is a non-trivial morphism having $0$-dimensional cokernel.
\end{itemize}
\end{defn}

For a stable pair $(F,s)$, let $C_{F}$ be the support of $F$. 
By \cite[Lemma 1.6]{PT09}, 
the image of $s$ is the structure sheaf $\CO_{C_{F}}$ and $C_{F}$ is a Cohen-Macaulay curve.
Here, a Cohen-Macaulay curve means a curve of pure dimension $1$ and possibly non-reduced, but without embedded points.
Fix a homology class $\beta\in H_{2}(X,\ZN)$ and $n\in \ZN$, the {\it moduli space $P_{n}(X,\beta)$ of stable pairs} is defined as
\begin{equation*}
P_{n}(X,\beta):=
\left \{\begin{array}{c} 
\textrm{stable pairs } (F,s) \\
\textrm{ with }   
{ [F] =\beta \textrm{ and } \chi(F)=n}
 \end{array}\right\},
\end{equation*}   
where $[F]$ is the homology class of the support of $F$.

\begin{thm}[{\cite{LeP93,PT09}}]
The moduli space $P_{n}(X,\beta)$ is a fine projective moduli space.
\end{thm}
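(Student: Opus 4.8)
The plan is to follow one of the two classical routes --- Le Potier's geometric invariant theory for coherent systems \cite{LeP93}, or the more hands-on construction of \cite{PT09} --- both of which rigidify the data and form a quotient. First I would establish boundedness: since $[F]=\beta$ and $\chi(F)=n$ are fixed, the supporting sheaves $F$ form a bounded family, so there is $m\gg 0$ with $F(m)$ globally generated, $H^{>0}(F(m))=0$, and $h^{0}(F(m))=P(m)$ for the linear Hilbert polynomial $P$ determined by $(\beta,n)$. Fixing $V:=\CN^{P(m)}$ and a choice of isomorphism $V\xrightarrow{\sim}H^{0}(F(m))$ converts a stable pair $(F,s)$ into a quotient $q\colon V\otimes\CO_{X}(-m)\twoheadrightarrow F$ together with a section $s\in\Hom(\CO_{X},F)$, i.e. a point of the relative $\Hom$-scheme over a Quot scheme. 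Inside it I would carve out the locally closed subscheme $\mathcal{R}$ on which $F:=\Coker(q)$ is $1$-dimensional and pure, $s\neq 0$ has $0$-dimensional cokernel, and $V\to H^{0}(F(m))$ is an isomorphism; then $G:=\mathrm{GL}(V)$ acts on $\mathcal{R}$ with orbits equal to isomorphism classes of stable pairs.

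The next step is to produce the quotient $P_{n}(X,\beta)=\mathcal{R}/G$ as a scheme, together with a universal object. The essential observation is that a stable pair is automorphism-free: if $\phi\in\mathrm{Aut}(F)$ satisfies $\phi\circ s=s$, then $\phi-\mathrm{id}$ kills the image $\CO_{C_{F}}$ of $s$ and hence factors through the $0$-dimensional quotient $Q=F/\CO_{C_{F}}$; since $F$ is pure of dimension $1$ (Definition \ref{stab-pair-def} and \cite[Lemma 1.6]{PT09}) there are no nonzero maps $Q\to F$, so $\phi=\mathrm{id}$. Thus $G$ acts on $\mathcal{R}$ with trivial stabilizers. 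From here I would either (i) equip a projective completion of $\mathcal{R}$ with a $G$-linearization for which PT-stability of $(F,s)$ coincides with GIT-stability in the $\delta\to\infty$ chamber of the pair-stability parameter, verify that every GIT-semistable point is GIT-stable, and take the GIT quotient; or (ii) follow \cite{PT09} in showing directly that the $G$-orbits in $\mathcal{R}$ are closed and invoke the existence of geometric quotients by free actions. Either way, freeness of the action makes the universal quotient sheaf and universal section on $\mathcal{R}\times X$ descend to a universal stable pair on $P_{n}(X,\beta)\times X$, which proves fineness.

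Finally, for projectivity: quasi-projectivity is immediate from the GIT quotient in (i), or from the Quot-scheme realization in (ii). Properness I would check by the valuative criterion over a DVR: given a family of stable pairs over the generic point, the support curves $C_{F}$ extend by properness of the Hilbert scheme, and a Langton-type semistable-reduction argument adapted to pairs --- equivalently, applied to the two-term complexes $[\CO_{X}\to F]\in\DC(X)$ --- produces a limiting stable pair over the closed point, whose uniqueness follows because PT-stability admits no strictly semistable objects, so no further elementary modification is possible. Together with boundedness this gives properness, hence projectivity. I expect the main obstacle to be precisely this last circle of ideas: choosing the pair-stability parameter and the linearization so that stability and semistability coincide, and carrying out the semistable-reduction argument for pairs rather than for sheaves; by contrast the boundedness and the descent of the universal family are routine adaptations of the Simpson--Le Potier machinery for moduli of sheaves.
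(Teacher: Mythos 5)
Your sketch is essentially the proof the paper is implicitly relying on: the statement is quoted verbatim from \cite{LeP93,PT09} with no proof given in the text, and Le Potier's construction is exactly your route (i) --- boundedness, rigidification over a Quot/Hom-scheme, the observation that a stable pair has no automorphisms (your $\phi-\mathrm{id}$ argument is the standard one), and a GIT quotient in the regime of the pair-stability parameter where PT-stability equals GIT-stability and no strictly semistable points occur. One small correction: in route (i) projectivity comes for free, since the moduli space is the GIT quotient of the semistable locus of a projective scheme, so the separate Langton-type valuative-criterion argument, while valid, is not needed there; also be aware that your route (ii) as stated (free action with closed orbits implies a scheme quotient) is not quite a theorem without additional input such as properness of the action.
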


\begin{ex}
Some moduli spaces of stable pairs associated with low degree curves in $\PB^{3}$ are provided as follows:
\begin{enumerate}
\item[(1)] $P_{1}(\PB^{3},[\ell]) \cong  \Hilb^{t+1}(\PB^{3})$ and it is of dimension $4$ and isomorphic to the Grassmann manifold ${\rm Gr}(2,4)$;
\item[(2)] $P_{1}(\PB^{3},2[\ell])\cong  \Hilb^{2t+1}(\PB^{3})$ and it is of dimension $8$ and is isomorphic to a $\PB^{5}$-bundle over $(\PB^{3})^{\vee}$;
\item[(3)] $P_{0}(\PB^{3},3[\ell])\cong \Hilb^{3t}(\PB^{3})$ and it is of dimension $12$ and is isomorphic to a $\PB^{9}$-bundle over $(\PB^{3})^{\vee}$,
\end{enumerate}
where $[\ell]$ is the homology class of  a line $\ell\subset \PB^{3}$.
\end{ex}

In \cite[Proposition 1.21]{PT09}, a stable pair $(F,s)$ is viewed as a two-term complex $\{\CO_{X}\xrightarrow{s} F\} $ uniquely up to quasi-isomorphisms.
Recall that an object $E\in \DC(X)$ is called  a {\it PT stable object} if $E$ is quasi-isomorphic to a two-term complex $\{\CO_{X}\xrightarrow{s} F\}$ such that $(F,s)$ is a stable pair. 
If $(F,s)$ is a stable pair, then the two-term complex $\{\CO_{X}\xrightarrow{s} F\}$ is a stable pair.
Moreover, one has the following equivalent description for stable pairs. 

\begin{lem}[{\cite{PT09},\cite[Lemma 2.9]{Rez24a}}]\label{stab-pair-equ-def}
An object $E\in \DC(X)$ is a PT stable object if and only if the following conditions hold: 
$(1)$\, $\CH^{i}(E)=0$ if $i\neq 0,1$;
$(2)$\, $\CH^{0}(E)$ is the ideal sheaf of a Cohen-Macaulay curve; 
$(3)$\, $\CH^{1}(E)$ is a sheaf with $0$-dimensional support, and
$(4)$\, $\Hom(\CO_{x},E[1])=0$ for all points $x\in X$.
\end{lem}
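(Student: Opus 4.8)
The plan is to prove both implications by translating between a PT stable object and the two‑term complex $\{\CO_X \xrightarrow{s} F\}$ that represents it; the bridge is the distinguished triangle $E \to \CO_X \xrightarrow{s} F \to E[1]$, so that $E \simeq \operatorname{Cone}(s)[-1]$ with $\CO_X$ in cohomological degree $0$ and $F$ in degree $1$, together with the cohomology truncation triangle of $E$.

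For the forward implication, suppose $E$ is a PT stable object, represented by a stable pair $(F,s)$. Then $\CH^i(E)=0$ for $i\neq 0,1$ is immediate, since the complex is concentrated in degrees $0$ and $1$; this is $(1)$. By \cite[Lemma 1.6]{PT09} the image of $s$ is $\CO_{C_F}$ for a Cohen--Macaulay curve $C_F$, hence $\CH^0(E)=\ker s = I_{C_F}$ and $\CH^1(E)=\Coker s = F/\CO_{C_F}$, which is $0$-dimensional by the definition of a stable pair; this gives $(2)$ and $(3)$. For $(4)$ I would apply $\Hom(\CO_x,-)$ to the triangle $E \to \CO_X \xrightarrow{s} F \to E[1]$ and use that $\Ext^i_X(\CO_x,\CO_X)=0$ for $i\leq 2$ on the smooth $3$-fold $X$; this yields a natural isomorphism $\Hom(\CO_x,E[1])\cong \Hom(\CO_x,F)$, and the right-hand side vanishes for every $x$ precisely because the pure sheaf $F$ has no $0$-dimensional subsheaves.

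For the converse, assume $(1)$--$(4)$ and write $I_C := \CH^0(E)$ (the ideal sheaf of a Cohen--Macaulay curve $C$) and $Q := \CH^1(E)$ (a sheaf of $0$-dimensional support), so that the truncation triangle reads $I_C \to E \to Q[-1] \to I_C[1]$. The first and key step is to produce a morphism $\phi\colon E \to \CO_X$: since $Q$ has $0$-dimensional support on the smooth $3$-fold $X$, one has $\Ext^1_X(Q,\CO_X)=\Ext^2_X(Q,\CO_X)=0$, so applying $\Hom(-,\CO_X)$ to the truncation triangle gives $\Hom(E,\CO_X)\cong \Hom(I_C,\CO_X)$, and the canonical inclusion $I_C\hookrightarrow \CO_X$ lifts to a unique $\phi$ whose $\CH^0$ is that inclusion. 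Next I would set $F := \operatorname{Cone}(\phi)$, let $s\colon \CO_X \to F$ be the structure map of $E \xrightarrow{\phi} \CO_X \xrightarrow{s} F \to E[1]$, and run a cohomology chase on this triangle: because $\CH^0(\phi)$ is injective one finds $\CH^{<0}(F)=\CH^{\geq 1}(F)=0$, a short exact sequence $0\to \CO_C \to F \to Q \to 0$, and $\Coker(s)\cong Q$; thus $F$ is an actual sheaf of dimension $\leq 1$ and $s$ has $0$-dimensional cokernel. Finally, purity of $F$ follows exactly as in the forward direction: $\Hom(\CO_x,F)\cong \Hom(\CO_x,E[1])$, which is $0$ for all $x$ by $(4)$, so $F$ has no $0$-dimensional subsheaves, and combined with $0\to \CO_C \to F \to Q \to 0$ this makes $F$ pure of dimension $1$. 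Hence $(F,s)$ is a stable pair and $E \simeq \operatorname{Cone}(s)[-1]$ is represented by the two-term complex $\{\CO_X \xrightarrow{s} F\}$, i.e., $E$ is a PT stable object.

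I expect the only delicate point to be the construction of $\phi\colon E\to \CO_X$ in the converse: it is precisely what forces $\operatorname{Cone}(\phi)$ to be concentrated in a single cohomological degree — rather than a genuine two-term complex with $\CH^0=\CO_X$ and $\CH^1=Q$ — and it rests on the $0$-dimensionality of $\CH^1(E)$ through the vanishing of $\Ext^1_X(Q,\CO_X)$ and $\Ext^2_X(Q,\CO_X)$. Everything else is routine homological bookkeeping, with condition $(4)$ contributing only the purity of the reconstructed sheaf $F$.
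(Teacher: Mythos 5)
Your proof is correct and follows essentially the same route as the paper: both directions hinge on the triangle $E \to \CO_{X} \to \mathrm{Cone}$, with the morphism $E\to\CO_{X}$ produced from $\Hom(E,\CO_{X})\cong\Hom(I_{C},\CO_{X})$ and purity of $F$ extracted from condition $(4)$ via $\Hom(\CO_{x},F)\cong\Hom(\CO_{x},E[1])$. Your version even makes explicit the vanishing $\Ext^{1}(Q,\CO_{X})=\Ext^{2}(Q,\CO_{X})=0$ that the paper uses implicitly when identifying $\Hom(E,\CO_{X})$ with $\Hom(I_{C},\CO_{X})$.
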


Here, we present a proof for the reader's convenience.

\begin{proof}
Suppose that a PT stable object $E\in \DC(X)$ is isomorphic to $I^{\bullet}:=\{\CO_{X}\xrightarrow{s} F\}$, where $(F,s)$ is a stable pair.
Then, we have an exact sequence
$$
\xymatrix@C=0.5cm{
0 \ar[r] & I_{C} \ar[r] & \CO_{X} \ar[r]^{s} \ar[r] & F \ar[r] & Q \ar[r] & 0,
}
$$
where $C$ is a Cohen-Macaulay curve and $Q$ is a $0$-dimensional sheaf. 
Taking cohomology sheaves of the exact triangle
$$
\xymatrix@C=0.5cm{
I_{C} \ar[r] & I^{\bullet} \ar[r] & Q[-1] \ar[r] & I_{C}[1],
}
$$
we obtain $\CH^{0}(E)\cong \CH^{0}(I^{\bullet})\cong I_{C}$ and $\CH^{1}(E)\cong \CH^{1}(I^{\bullet})\cong Q$.
For any point $x\in X$,
applying $\Hom(\CO_{x},-)$ to the exact triangle
$
\xymatrix@C=0.4cm{
F\ar[r] & I^{\bullet}[1] \ar[r] & \CO_{X}[1] \ar[r] & F[1],
}
$
we have $\Hom(\CO_{x},E[1])\cong\Hom(\CO_{x},F)=0$ since $F$ is a pure sheaf.

Conversely, by condition (1),
we may assume that $E$ is a two-term complex. 
Then, we have an exact triangle
\begin{equation}\label{stable_pair_E_SES}
\xymatrix@C=0.5cm{
I_{C} \ar[r] & E \ar[r] & Q[-1] \ar[r] & I_{C}[1],
} 
\end{equation}
where $I_{C}:=\CH^{0}(E)$ and $Q:=\CH^{1}(E)$.
Note that $\Hom(I_{C},\CO_{X}) \cong \Hom(\CO_{X},\CO_{X})= \CN$.
Applying $\Hom(-,\CO_{X})$ to \eqref{stable_pair_E_SES}, we obtain $\Hom(E,\CO_{X})\cong \Hom(I_{C},\CO_{X})=\CN$. 
Let $f\in \Hom(E,\CO_{X})$ be a non-trivial morphism. 
Then, we have an exact triangle
\begin{equation}\label{Cone-E-OX-SES}
\xymatrix@C=0.5cm{
E \ar[r]^{f} & \CO_{X} \ar[r]^{g\,\,\,\,} & C(f) \ar[r] & E[1],
}
\end{equation}
where $C(f)$ is the cone of $f$.
Taking cohomology sheaves of \eqref{Cone-E-OX-SES}, we have $\mathcal{H}^{i}(C(f))=0$ for $i\neq 0$. 
Hence, there exists a quasi-isomorphism $q: C(f) \rightarrow \mathcal{H}^{0}(C(f))$.
We denote by $F:=\mathcal{H}^{0}(C(f))$ and by $s=q\circ g:\CO_{X} \rightarrow F$.
We set $I^{\bullet}:=\{\CO_{X} \xrightarrow{s} F\}$ the two-term complex.
By the definition of $s$, we obtain a morphism of exact triangles
\begin{equation}\label{mor-ext-tri-1}
\xymatrix@C=0.5cm{
E \ar[r]^{f} \ar[d] & \CO_{X} \ar[r]^{g\,\,\,\,} \ar[d]_{id} & C(f) \ar[r] \ar[d]_{q} & E[1] \ar[d]\\
I^{\bullet} \ar[r] & \CO_{X} \ar[r]^{s} & F \ar[r] & I^{\bullet}[1].
}
\end{equation}
It follows that $E$ is quasi-isomorphic to $ I^{\bullet}$.
For any $x\in X$,
applying $\Hom(\CO_{x},-)$ to the bottom exact triangle in \eqref{mor-ext-tri-1},
the condition (4) implies $\Hom(\CO_{x},F)=0$, i.e., $F$ is a pure sheaf.
Hence, by conditions (2) and (3), the pair $(F,s)$ is a stable pair.
As a result, by the definition, $E\in \DC(X)$ is a PT stable object.
\end{proof}

\begin{rem}\label{PT-stab-pair-equ-defn}
For a homology class $\beta\in H_{2}(X,\ZN)$ and $n\in \ZN$, by Lemma \ref{stab-pair-equ-def}, we have
\begin{equation*}
P_{n}(X,\beta)=
\left \{\begin{array}{c} 
\textrm{PT stable objects } E\in \DC(X) \textrm{ with}\\  
\ch(E)=(1,0,-\beta,-n+\frac{\beta\cdot c_{1}(X)}{2})
 \end{array}\right\}.
\end{equation*}      
\end{rem}

The following result is well-known to experts (see e.g.  \cite[Proposition 6.1.1]{Bay09}, \cite[Proposition 3.12]{Tod10} and \cite{Rez24a}). 
It plays a crucial role in the study of the moduli spaces of stable pairs via wall crossings in Bridgeland stability. 
However, we find no proof in literature.
Here, we present a proof for the reader's convenience.
Suppose now that $X$ is a Fano threefold of Picard number $1$.
For the Chern character $v=(1,0,-\beta,-n+\frac{\beta\cdot c_{1}(X)}{2})$,
we denote by $\Gamma$ the left branch of the hyperbola given by $\mu_{\alpha,\beta}(v)=0$.

\begin{prop}\label{Bri-mod=PT-mod}
Let $(\alpha,\beta)$ be above the largest wall and close to $\Gamma$ on the right side,
and $\sigma:=\lambda_{\alpha,\beta,s}$ a Birdgeland stability condition with $s\gg 1$.
Then the moduli space $P_{n}(X,\beta)$ is isomorphic to the Bridgeland moduli space $M_{\sigma}(v)$.
\end{prop}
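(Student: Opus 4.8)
The plan is to identify both moduli spaces with the same set of objects inside the category $\DC(X)$, and then to upgrade this bijection to an isomorphism of moduli spaces by a standard argument comparing the two moduli functors. By Remark \ref{PT-stab-pair-equ-defn}, a point of $P_n(X,\beta)$ is a PT stable object $E\in\DC(X)$ with $\ch(E)=v$, characterized by the four conditions of Lemma \ref{stab-pair-equ-def}. On the other side, a point of $M_\sigma(v)$ is a $\lambda_{\alpha,\beta,s}$-semistable object with $\ch(E)=v$, for $(\alpha,\beta)$ just to the right of $\Gamma$, above the largest wall, and $s\gg1$. The heart of the matter is to show these two classes of objects coincide.

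First I would show that every PT stable object $E$ lies in $\Coh^{\alpha,\beta}(X)$ and is $\lambda_{\alpha,\beta,s}$-stable for the specified $(\alpha,\beta,s)$. Writing the exact triangle $I_C\to E\to Q[-1]$ from \eqref{stable_pair_E_SES} with $I_C=\CH^0(E)$ an ideal sheaf of a Cohen--Macaulay curve and $Q=\CH^1(E)$ zero-dimensional, one checks $I_C\in\Coh^{\alpha,\beta}(X)$ (its only tilt-destabilizing quotients would have $\mu_{\alpha,\beta}\le 0$, but since $(\alpha,\beta)$ is close to $\Gamma$ and above all walls of $v$, the HN filtration of $I_C$ in tilt stability is trivial — here I invoke Proposition \ref{limit-tilt-stab-Gie-stab-prop} and the wall-crossing analysis, using that $I_C$ is $\mu_H$-stable hence tilt-semistable for large $\alpha$, and that no wall for $v$ has been crossed), and that $Q[-1]\in\mathcal F^0_{\sigma_{\alpha,\beta}}[1]\subset\Coh^{\alpha,\beta}(X)$ since $Q$ has $\mu_{\alpha,\beta}=+\infty$... actually $Q\in\mathcal{T}$, so $Q[-1]$ sits in the tilted heart as a shift; being supported in dimension $0$ it has $\lambda$-slope $+\infty$. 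Then $E$, as an extension, lies in $\Coh^{\alpha,\beta}(X)$. For stability: any $\lambda_{\alpha,\beta,s}$-destabilizing subobject or quotient of $E$ with $s\gg1$ would, by Lemma \ref{tilt-stab-Bri-stab} applied to the sub/quotient, force either a tilt-destabilization of $I_C$ (impossible, as above) or a nontrivial map $\CO_x\to E[1]$ (impossible by condition (4) of Lemma \ref{stab-pair-equ-def}). This is where I expect the delicate bookkeeping to be; the precise second part of Lemma \ref{tilt-stab-Bri-stab} gives exactly the criterion $\Hom(\CO_x,E)=0$ for $s\gg1$-stability, matching condition (4) after the shift.

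Conversely, I would show every $\lambda_{\alpha,\beta,s}$-semistable object $E$ with $\ch(E)=v$ and $s\gg1$ is PT stable, hence in particular $\lambda$-stable so that $M_\sigma(v)$ is a fine moduli space with no strictly semistable locus. Since $\mu_{\alpha,\beta}(v)<0$ (we are to the right of $\Gamma$, i.e.\ outside the hyperbola $\mu_{\alpha,\beta}(v)=0$ where $\mu_{\alpha,\beta}(v)$ is negative) and $\ch_1^\beta(v)>0$, Corollary \ref{tilt-stab-Bri-stab-cor}(2) gives $\CH^0_\beta(E)$ tilt-semistable and $\CH^1_\beta(E)$ zero-dimensional (or zero); translating back to the standard heart via the triangle $\CH^{-1}_\beta(E)[1]\to E\to\CH^0_\beta(E)$ and taking sheaf cohomology, together with $\ch_0=1$, forces $\CH^{<0}(E)=0$ and $\CH^{\ge2}(E)=0$, $\CH^0(E)$ torsion-free of rank one, and $\CH^1(E)$ zero-dimensional; tilt-semistability of $\CH^0(E)$ just right of $\Gamma$ (no wall crossed) plus purity arguments show $\CH^0(E)$ is the ideal sheaf of a subscheme, whose zero-dimensional part can be absorbed — one checks the resulting curve is Cohen--Macaulay. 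Finally $\lambda$-(semi)stability with $s\gg1$ forbids $\Hom(\CO_x,E[1])\ne0$: such a map would give a quotient $E\twoheadrightarrow\CO_x[1]$ of infinite $\lambda$-slope versus some finite-slope subobject, contradicting semistability (or directly: the kernel would destabilize). Thus all four conditions of Lemma \ref{stab-pair-equ-def} hold and $E$ is PT stable.

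Having matched the underlying objects, I would conclude the isomorphism $P_n(X,\beta)\cong M_\sigma(v)$ at the level of moduli spaces. Both are fine moduli spaces (the former by \cite{LeP93,PT09}, the latter by \cite{PT19} since there are no strictly semistable objects), so it suffices to identify their functors of points: for any base scheme $T$, a $T$-flat family of PT stable objects with $\ch=v$ is exactly a $T$-flat family of $\lambda_{\alpha,\beta,s}$-stable objects with $\ch=v$, since flatness and the fiberwise conditions (being a PT stable object $\Leftrightarrow$ being $\lambda$-stable for our fixed $(\alpha,\beta,s)$) are equivalent pointwise by the above and the relevant conditions are open in families. Equivalently, both moduli spaces corepresent the same functor, hence are canonically isomorphic. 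The main obstacle, as indicated, is the careful use of Lemma \ref{tilt-stab-Bri-stab} and Corollary \ref{tilt-stab-Bri-stab-cor} near the branch $\Gamma$ — in particular confirming that ``just to the right of $\Gamma$, above the largest wall'' places us in the chamber where $\mathcal{M}_4$ of the introduction lives and where no tilt-wall for $v$ or its sub/quotients has yet been crossed, so that the tilt-semistable rank-one objects are precisely ideal sheaves of Cohen--Macaulay curves.
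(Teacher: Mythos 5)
Your proposal is correct and follows essentially the same route as the paper: both directions are handled by combining Lemma \ref{tilt-stab-Bri-stab} and Corollary \ref{tilt-stab-Bri-stab-cor} with the large volume limit (Proposition \ref{limit-tilt-stab-Gie-stab-prop}) and the characterization of PT stable objects in Lemma \ref{stab-pair-equ-def}, with the destabilizing-subobject case analysis for the converse. The only caveats are bookkeeping slips that do not affect the argument: the object of the tilted heart $\Coh^{\alpha,\beta}(X)$ is $E[1]$ rather than $E$ (so it is $Q$, not $Q[-1]$, that sits in the heart, via the shifted triangle $I_{C}[1]\to E[1]\to Q$), and a nonzero map $\CO_{x}\to E[1]$ produces a destabilizing \emph{subobject} of infinite slope, not a quotient.
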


\begin{proof} 
Assume $E\in M_{\sigma}(v)$.
Then $E[1]\in \Coh^{\alpha,\beta}(X)$ and $E[1]$ is $\lambda_{\alpha,\beta,s}$-stable.
Since $\CO_{x}$ is $\lambda_{\alpha,\beta,s}$-stable, 
so $\Hom(\CO_{x},E[1])=0$ for all $x\in X$.
We need to show that $E$ is a PT stable object.
Note that there is a short exact sequence
\begin{equation}\label{E-def-ses}
\xymatrix@C=0.5cm{
0 \ar[r] & \mathcal{H}^{0}_{\beta}(E)[1] \ar[r] & E[1] \ar[r] & \mathcal{H}^{1}_{\beta}(E) \ar[r] & 0
}
\end{equation}
in $\Coh^{\alpha,\beta}(X)$.
By Corollary  \ref{tilt-stab-Bri-stab-cor},
$\CH_{\beta}^{0}(E)$ is $\sigma_{\alpha,\beta}$-stable and $\CH_{\beta}^{1}(E)$ is a torsion sheaf $Q$ supported in dimension $0$. 
By Proposition \ref{limit-tilt-stab-Gie-stab-prop}, we obtain $\CH_{\beta}^{0}(E)$ is an ideal sheaf $I_{C}$ of a curve $C$. 
By taking cohomology sheaves of \eqref{E-def-ses}, we deduce $\CH^{0}(E)\cong I_{C}$ and $\CH^{1}(E)\cong Q$.
Applying $\Hom(\CO_{x},-)$ to \eqref{E-def-ses},
we get $\Hom(\CO_{x},I_{C}[1])=0$ for all $x\in X$.
By applying $\Hom(\CO_{x},-)$ to the structure sheaf sequence 
$$
\xymatrix@C=0.5cm{
0 \ar[r] & I_{C} \ar[r] & \CO_{X} \ar[r] & \CO_{C} \ar[r] & 0,
}
$$
we have $\Hom(\CO_{x},\CO_{C})=0$ for all $x\in X$. 
Thus, $C$ is a Cohen-Macaulay curve.
According to Lemma \ref{stab-pair-equ-def}, $E$ is a PT stable object.
Hence, Remark \ref{PT-stab-pair-equ-defn} implies $E\in P_{n}(X,\beta)$.

Conversely, let $E\in P_{n}(X,\beta)$ be a PT stable object.
By Lemma \ref{stab-pair-equ-def},
we may assume that $E:=\{\CO_{X}\xrightarrow{s}F\}$, where $(F,s)$ is a stable pair. Let $I_{C}:=\CH^{0}(E)$ be the ideal sheaf of the Cohen-Macaulay curve $C$ and $Q:=\CH^{1}(E)$ the torsion sheaf with $0$-dimensional support. 
It is sufficient to show that $E[1]\in \Coh^{\alpha,\beta}(X)$ and $E[1]$ is $\lambda_{\alpha,\beta,s}$-semistable.
For any $\beta<0,\alpha\gg 0$, by Proposition \ref{limit-tilt-stab-Gie-stab-prop}, $I_{C}$ is $\sigma_{\alpha,\beta}$-stable. Hence, we derive $I_{C}[1]\in \Coh^{\alpha,\beta}(X)$. 
Note that $Q\in \Coh^{\alpha,\beta}(X)$.
Since $I_{C}[1]$, $Q$ and $E[1]$ fit into an exact triangle as \eqref{stable_pair_E_SES}, we have $E[1]\in \Coh^{\alpha,\beta}(X)$.
By Corollary \ref{tilt-stab-Bri-stab-cor}, 
it suffices to prove that $E$ is $\lambda_{\alpha,\beta,s}$-semistable for $s\gg 1$.
Assume that $E$ is not $\lambda_{\alpha,\beta,s}$-semistable for $s\gg 1$. Let $A\subset E[1]$ be the $\lambda_{\alpha,\beta,s}$-semistable subobject for $s\gg 1$ in the Harder-Narasimhan filtration of $E[1]$.
From this, we see that $\lambda_{\alpha,\beta,s}(A)>\lambda_{\alpha,\beta,s}(E[1])$ for $s\gg 1$.
By Lemma \ref{tilt-stab-Bri-stab}, we know that $\CH^{-1}_{\beta}(A)$ is $\sigma_{\alpha,\beta}$-semistable and $\CH^{0}_{\beta}(A)$ is supported in dimension $0$.
We have two cases.

(i) If $\CH^{-1}_{\beta}(A)=0$, then $A\cong \CH^{0}_{\beta}(A)$ is supported in dimension $0$.
By the hypothesis $\Hom(A,E[1])\neq 0$, this contradicts Lemma \ref{stab-pair-equ-def} (4).

(ii) If $\CH^{-1}_{\beta}(A)\neq 0$, then we get $\ch_{\leq 2}(A)=\ch_{\leq 2}(\CH^{-1}_{\beta}(A)[1])$.
Since $\lambda_{\alpha,\beta,s}(A)>\lambda_{\alpha,\beta,s}(E[1])$ for $s\gg 1$, so we have 
\begin{equation}\label{tilt_tilt_stab_A}
-\frac{\ch_{1}^{\beta}(A)}{\ch_{2}^{\beta}(A)-\frac{\alpha^{2}}{2}\ch_{0}^{\beta}(A)}
>
-\frac{\ch_{1}^{\beta}(E[1])}{\ch_{2}^{\beta}(E[1])-\frac{\alpha^{2}}{2}\ch_{0}^{\beta}(E[1])}.
\end{equation}
In the inequality \eqref{tilt_tilt_stab_A}, 
the denominators are positive.
Note that $\ch_{1}^{\beta}(\CH^{-1}_{\beta}(A))>0$ and $\ch_{\leq 2}(E[1])=\ch_{\leq 2}(I_{C}[1])$.
Then, the inequality \eqref{tilt_tilt_stab_A} yields  
\begin{equation}\label{tilt_stab_H^-1A}
\mu_{\alpha,\beta}(\CH^{-1}_{\beta}(A))
>
\mu_{\alpha,\beta}(I_{C}).
\end{equation}
Since $A$ is a subobject of $E[1]$
and $I_{C}=\CH^{0}_{\beta}(E)$, 
taking $\CH^{-1}_{\beta}$ cohomology objects,
we have
$
\CH^{-1}_{\beta}(A) \subset I_{C}
$
in $\Coh^{\beta}(X)$.
This contradicts \eqref{tilt_stab_H^-1A}, 
since $I_{C}$ is $\sigma_{\alpha,\beta}$-stable.
\end{proof}

%=========================================================================== 

\section{Wall and chamber structures}\label{Wall-chamber}
 
It is well-known that there exists a locally finite wall and chamber structure in the stability manifold.
Let $v=(1,0,-5,11)$ be the Chern character of the ideal sheaf of a quintic genus $2$ space curve.
In this section, we describe the walls in tilt stability and Bridgeland stability for the class 
$v$ and present a geometric description of these walls. 
 
Firstly, we compute all possible walls in tilt stability with respect to the class $v$. Let $\Gamma$ be the left branch of the hyperbola
$\beta^{2}-\alpha^{2}=10$ given by $\mu_{\alpha,\beta}(v)=0$. 
By Theorem \ref{tilt-wall-struct-thm}, these walls intersect $\Gamma$ at their apexes.
We denote the semicircle 
$$
W(c,r):=\{(\alpha,\beta)\in \RN_{>0}\times \RN | \alpha^{2}+(\beta-c)^{2}=r^{2}\}.
$$

\begin{prop}\label{main-numerical-walls}
There exist three possible walls intersecting $\Gamma$ for tilt semistable objects of class $v$.
The walls, with either the subobject or the quotient $A$, are given as follows:
\begin{enumerate}
\item[(1)]  $W(-\frac{7}{2},\frac{3}{2})$, $\ch_{\leq 2}(A)=(1,-2,2)$ or $\ch_{\leq 2}(A)=(1,-1,-\frac{3}{2})$;
\item[(2)]  $W(-\frac{9}{2},\frac{\sqrt{41}}{2})$, $\ch_{\leq 2}(A)=(1,-1,-\frac{1}{2})$,
\item[(3)] $W(-\frac{11}{2},\frac{9}{2})$, $\ch_{\leq 2}(A)=(1,-1,\frac{1}{2})$.
\end{enumerate}
\end{prop}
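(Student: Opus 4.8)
The plan is to classify the possible semicircular walls by their destabilizing subobjects, reduce to a finite numerical list by means of the Bogomolov inequality, and then cut the list down using the BMT inequality. By Theorem~\ref{tilt-wall-struct-thm}(2), every semicircular wall for $v$ has its apex on the hyperbola $\mu_{\alpha,\beta}(v)=0$, whose left branch is $\Gamma$. Suppose $0\to A\to E\to B\to 0$ is a destabilizing exact sequence in $\Coh^{\beta}(X)$ with $\ch(E)=v$, and write $\ch_{\le 2}(A)=(r,c,d)$ in the normalization $C_{i}=H^{3-i}\cdot\ch_{i}$, so that $\ch_{\le 2}(B)=(1-r,-c,-5-d)$. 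A short computation with the equation $\mu_{\alpha,\beta}(A)=\mu_{\alpha,\beta}(v)$ shows that, for $c\ne 0$, the numerical wall $W(E,A)$ is the semicircle
\[
\alpha^{2}+(\beta-m)^{2}=m^{2}-10,\qquad m:=\frac{d+5r}{c},
\]
with apex $\bigl(\sqrt{m^{2}-10},m\bigr)$; this apex lies on $\Gamma$ exactly when $m\le-\sqrt{10}$. The case $c=0$ yields only the vertical wall $\beta=0$ or the degenerate locus where $\ch_{\le 2}(A)$ is proportional to $\ch_{\le 2}(v)$, neither of which is a semicircle meeting $\Gamma$.

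Next I would bound the destabilizer. The Bogomolov inequality for tilt stability (Proposition~\ref{tilt-Bog-ineq}) gives $\Delta_{H}(A)\ge 0$ and $\Delta_{H}(B)\ge 0$; combined with the standard bound $\Delta_{H}(A)+\Delta_{H}(B)\le\Delta_{H}(v)=10$ valid on a wall, with $0\le\ch_{1}^{\beta}(A)\le\ch_{1}^{\beta}(v)$ along the wall, and with the integrality of $\ch(A),\ch(B)$ on $\PB^{3}$ (in particular $2C_{2}\equiv C_{1}\pmod{2}$), only finitely many triples $(r,c,d)$ survive, and one checks that each has $r\in\{0,1\}$, so every destabilizing sequence has a rank-$1$ and a rank-$0$ factor. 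From this finite list I would then discard: (a) triples with $m^{2}\le 10$, which produce no semicircle meeting $\Gamma$; (b) triples for which $A$ or $B$ cannot lie in $\Coh^{\beta}(X)$ on the corresponding semicircle (e.g.\ because $\ch_{1}^{\beta}(A)$ or $\ch_{1}^{\beta}(B)$ turns negative); and, decisively, (c) any numerical wall whose semicircle lies entirely in the region $\{Q_{\alpha,\beta}(v)<0\}$, since by the BMT inequality (Theorem~\ref{BMT-inequality}) no tilt-semistable object of class $v$ exists there. The locus $Q_{\alpha,\beta}(v)=0$ is the semicircle $W\!\bigl(-\tfrac{33}{10},\tfrac{\sqrt{89}}{10}\bigr)$, and filter (c) removes, for instance, the candidate $\ch_{\le 2}(A)=(1,-3,\tfrac92)$, i.e.\ $A\cong\CO(-3)$, whose small semicircle $W\!\bigl(-\tfrac{19}{6},\tfrac16\bigr)$ is entirely contained in that region. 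Grouping the survivors according to the value of $m$ they define leaves exactly the three walls of the statement: $m=-\tfrac72$, from $\ch_{\le 2}(A)\in\{(1,-2,2),\,(1,-1,-\tfrac32)\}$, giving $W(-\tfrac72,\tfrac32)$; $m=-\tfrac92$, from $\ch_{\le 2}(A)=(1,-1,-\tfrac12)$, giving $W(-\tfrac92,\tfrac{\sqrt{41}}{2})$; and $m=-\tfrac{11}{2}$, from $\ch_{\le 2}(A)=(1,-1,\tfrac12)$, giving $W(-\tfrac{11}{2},\tfrac92)$.

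The step I expect to be the main obstacle is the delicate bookkeeping in the elimination: destabilizing sub- and quotient objects in $\Coh^{\beta}(X)$ need not be sheaves and may have non-positive rank, so all such possibilities must be carried through the argument before being ruled out, and it is in the interplay between the parity constraints, membership in $\Coh^{\beta}(X)$, and the BMT-region exclusion that the analysis is easiest to get wrong. Producing honest destabilizing sequences that realize walls (1)--(3) as \emph{actual} walls is left to the subsequent sections.
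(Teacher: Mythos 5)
Your overall strategy is the same as the paper's: enumerate numerical candidates for the destabilizer using the discriminant inequalities and integrality, then eliminate the spurious ones with the BMT inequality. The wall formula $m=(d+5r)/c$ and the final matching of classes to the three semicircles are all correct. However, two steps in your plan, as written, would not go through without repair.

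First, your claim that the Bogomolov and integrality filters alone leave only triples with $r\in\{0,1\}$ is false. The class $\ch_{\le 2}(A)=(2,-4,3)$ with quotient $\ch_{\le 2}(B)=(-1,4,-8)$ satisfies $\Delta_H(A)=4$, $\Delta_H(B)=0$, $\Delta_H(A)+\Delta_H(B)\le 10$, all parity constraints, and $0<\ch_1^{\beta}(A)<\ch_1^{\beta}(v)$ along its wall $W(-\tfrac{13}{4},\tfrac34)$, whose apex $\beta=-\tfrac{13}{4}<-\sqrt{10}$ lies on $\Gamma$. So a rank-$2$ candidate genuinely survives to the stage where you assert rank $\le 1$. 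It is rescued only later: since $-\tfrac{13}{4}>-\tfrac{33}{10}$, this semicircle lies entirely in $\{Q_{\alpha,\beta}(v)<0\}$ and your filter (c) does remove it — but then the intermediate reduction to ``one rank-$1$ and one rank-$0$ factor'' cannot be invoked before (c), and the logic of the elimination has to be reordered. (The paper handles this candidate differently: it identifies $B\cong\CO(-4)[1]$ via Schmidt's classification and applies the BMT inequality to $A$ itself; and it kills the rank $\ge 3$ candidates with Li's bound on $\ch_2/\ch_0$, which in your setup is subsumed by $\Delta_H(B)\ge 0$, e.g.\ $\Delta_H(B)=2-a<0$ for $a\ge 3$ in the relevant family.)

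Second, the finiteness of your candidate list is not justified as stated. The constraint $0\le\ch_1^{\beta}(A)\le\ch_1^{\beta}(v)=-\beta$ only becomes a finite bound once you evaluate it at a \emph{fixed} $\beta$ that every actual wall must cross, and $-\beta$ is unbounded as the walls grow. This is precisely the role of the paper's opening observation: on a semicircle with apex on $\Gamma$ one has $Q_{\alpha,\beta}(v)=5\beta\bigl(2m+\tfrac{33}{5}\bigr)$, so an actual wall forces $m\le-\tfrac{33}{10}$ and hence the wall crosses the line $\beta=-3$; there $\ch_1^{-3}(E)=3$ pins $\ch_1^{-3}(A)\in\{1,2\}$ and the list becomes finite. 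In other words, your BMT filter (c) must be promoted from a final clean-up step to the very first step of the argument, exactly as in the paper. With that reordering your proposal reproduces the paper's proof.
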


\begin{proof}
Let $E$ be a tilt semistable object with $\ch(E)=v=(1,0,-5,11)$.
Suppose that $E$ is $\sigma_{\alpha,\beta}$-semistable for some $\alpha$ and $\beta$. 
Then, by Theorem \ref{BMT-inequality}, we have the BMT inequality
$$
Q_{\alpha,\beta}(E)=5(\alpha^{2}+\beta^{2})+33\beta+50\geq 0.
$$
Note that $Q_{\alpha,\beta}(E)$ defines a semicircle intersecting with $\beta=-3$. 
As a result, it is sufficient to determine all possible walls along $\beta=-3$.
Assume that there is such an actual wall induced by a short exact sequence
\begin{equation}\label{wall-defn-exact-sq}
\xymatrix@C=0.5cm{
0 \ar[r] & A \ar[r] & E \ar[r] & B \ar[r] & 0
}
\end{equation}
of $\sigma_{\alpha,-3}$-semistable objects in $\Coh^{-3}(\PB^{3})$. 
We write the twisted Chern character $\ch_{\leq 2}^{-3}(A)=(a,b,\frac{c}{2})$, where $a,b,c\in \ZN$. 
Then, we have $\ch_{\leq 2}(A)=(a,(b-3a),\frac{c+9a-6b}{2})$. By changing the roles of $A$ and $B$, we may assume $a\geq 1$. 
Note that $\ch_{\leq 2}^{-3}(E)=(1,3,-\frac{1}{2})$. 
Under the assumption \eqref{wall-defn-exact-sq},
by Proposition \ref{tilt-Bog-ineq} and Definition \ref{wall-def}, we have
$0\leq \Delta_{H}(A)\leq \Delta_{H}(E)$
and $\mu_{\alpha,-3}(A)=\mu_{\alpha,-3}(E)$.
Then, we obtain 
\begin{eqnarray}
b^{2}-10 &\leq&  ac \leq b^{2},  \label{delta-inequ} \\
(3a-b)\alpha^{2}&=&  3c+b.   \label{mu-slope-equ}
\end{eqnarray}
It follows from $A\in \Coh^{-3}(\PB^{3})$ that $0< \ch_{1}^{-3}(A)< \ch_{1}^{-3}(E)$. 
Then, we get $b\in\{1,2\}$.
In the following, the proof discusses two cases:

{\bf Case}  $b=1$. 
Since $a\geq 1$, by \eqref{delta-inequ} and \eqref{mu-slope-equ},
we obtain $c\geq 0$ and $0\leq ac\leq 1$.
Clearly, we have $c=0$ or $c=1$.
(i) If $c=0$, then $\ch_{\leq 2}(A)=(a,1-3a,\frac{9a-6}{2})$. 
Hence, we have $c_{2}(A)=\frac{3a(3a-5)+7}{2} \notin \ZN $, a contradiction.
(ii) If $c=1$, then $a=1$. 
Thus, we derive $\ch_{\leq 2}(A)=(1,-2,2)$. 
Then, we get the possible wall $W(E,A)=W(-\frac{7}{2},\frac{3}{2})$.

{\bf Case}  $b=2$.
Since $a\geq 1$, by \eqref{delta-inequ} and \eqref{mu-slope-equ}, 
we get $c\geq 0$ and $0\leq ac\leq 4$.
Since the second Chern class $c_{2}(A)=\frac{6b-6ab+9a(a-1)+b^{2}-c}{2}\in \ZN$, so $c$ must be even. 
Hence, we obtain $c\in \{0,2,4\}$.
{\bf (i)} If $c=0$, then $\ch_{\leq 2}(A)=(a,2-3a,\frac{9a-12}{2})$. 
Next, we consider three cases for $a$.
(1) If $a=1$, then $\ch_{\leq 2}(A)=(1,-1,-\frac{3}{2})$.
Thus, we have the possible wall $W(E,A)=W(-\frac{7}{2},\frac{3}{2})$.
(2) If $a=2$, then $\ch_{\leq 2}(A)=(2,-4,3)$ and $\ch_{\leq 2}(B)=(-1,4,-8)$. 
Since $B$ is $\sigma_{\alpha,\beta}$-semistable, by \cite[Proposition 4.6]{Sch20a}, we deduce $B\cong \CO(-4)[1]$. Hence, we have $\ch(A)=(2,-4,3,\frac{1}{3})$. 
Note that $A$ is $\sigma_{\alpha,\beta}$-semistable for $(\alpha,\beta)\in W(E,A)=W(-\frac{13}{4},\frac{3}{4})$.
Thanks to Theorem \ref{BMT-inequality}, 
we obtain the BMT inequality
$
Q_{\alpha,\beta}(A)=2(\alpha^{2}+\beta^{2})+14\beta+22 \geq 0,
$
a contradiction, since the semicircle defined by $Q_{\alpha,\beta}(A)$ is above $W(E,A)$.
(3) If $a\geq 3$, then $\ch_{\leq 2}(B)=(1-a,3a-2,\frac{2-9a}{2})$. Thus, we have $\frac{\ch_{2}(B)}{\ch_{0}(B)}>3|\mu_{H}(B)|-\frac{9}{2}$, a contradiction with \cite[Proposition 3.2]{Li19}. 
{\bf (ii)} If $c=2$, then $a\in \{1,2\}$. 
(1) If $a=1$, then $\ch_{\leq 2}(A)=(1,-1,-\frac{1}{2})$. 
Thus, we get the possible wall $W(E,A)=W(-\frac{9}{2},\frac{\sqrt{41}}{2})$.
(2) If $a=2$, then $\ch_{\leq 2}(A)=(2,-4,4)$. 
Hence, we get $\ch_{\leq 2}(B)=(-1,4,-9)$, 
a contradiction with \cite[Proposition 3.2]{Li19}.
{\bf (iii)} If $c=4$, then $a=1$ and $\ch_{\leq 2}(A)=(1,-1,\frac{1}{2})$.
Therefore, we have the possible wall $W(E,A)=W(-\frac{11}{2},\frac{9}{2})$. 
\end{proof}

\begin{rem}
Generally speaking, for a smooth, irreducible, and non-degenerate curve $C\subset \PB^{3}$ of degree $d\geq 3$, a famous theorem of Castelnuovo states that its genus $g$ satisfies: (i) if $d$ is even, then $g\leq d^{2}/4-d+1$; 
(ii) if $d$ is odd, then $g\leq  (d^{2}-1)/4-d+1$.
Let $C$ be of maximal genus.
\begin{enumerate}
\item[(i)] If $d=2m$, then $\ch(I_{C})=(1,0,-2m,m^{2}+2m)$. 
The BMT inequality $Q_{\alpha,\beta}(I_{C})\geq 0$ gives a semicircle 
$\alpha^{2}+[\beta+\frac{3}{4}(m+2)]^{2}=\frac{9m^2-28m+36}{16}$. 
Thus, it suffices to determine the numerical walls along $\beta=-m$.
\item[(ii)] If $d=2m+1$, then $\ch(I_{C})=(1,0,-2m-1,m^{2}+3m+1)$. 
The BMT inequality $Q_{\alpha,\beta}(I_{C})\geq 0$ gives a semicircle $
\alpha^{2}+[\beta+\frac{3(m^2+3m+1)}{2(2m+1)}]^2=\frac{9m^{4}-10m^{3}+3m^{2}+6m+1}{4(2m+1)^{2}}$. Hence, it suffices to determine the numerical walls along $\beta=-m-1$.
\end{enumerate}
\end{rem}
 
Next, we will describe the destabilizing pairs of tilt semistable objects whose extensions completely determine all strictly semistable objects at all the walls in Proposition \ref{main-numerical-walls}.

\begin{thm}\label{main-actual-wall}
There are three walls for tilt semistable objects of class $v$ in the $(\alpha,\beta)$-plane with $\beta<0$.
Let $B$ be a Gieseker semistable sheaf fitting into the short exact sequence 
\begin{equation}\label{B_SES_S4}
\xymatrix@C=0.5cm{
0 \ar[r] & \CO(-3)^{\oplus 2} \ar[r] & B \ar[r] & \CO(-4)^{\oplus 2}[1] \ar[r] & 0,
}
\end{equation}
$Z_{i}$ a $0$-dimensional subscheme of length $i$ in a plane $V$, $\ell_{i}$ a line plus $i$ embedding/floating points, $Z_{i}^{\prime}$ a $0$-dimensional subscheme of length $i$ and $C_{2}$  a conic. 
\begin{center}
\renewcommand\arraystretch{1.4}
\begin{tabular}{p{1.4in}p{2.28in}}
    \hline
     $(\beta+\frac{7}{2})^{2}+\alpha^{2}=(\frac{3}{2})^{2}$& $\langle \CO(-2),B\rangle$, $\langle I_{C_{2}}(-1),\CO_{V}(-3) \rangle$ \\
    \hline
    $(\beta+\frac{9}{2})^{2}+\alpha^{2}=(\frac{\sqrt{41}}{2})^{2}$ &
    $\langle I_{\ell_{i}}(-1), I_{Z_{i}/V}(-4)\rangle, i\in \{0,1\} $ \\
    \hline
    $(\beta+\frac{11}{2})^{2}+\alpha^{2}=(\frac{9}{2})^{2}$ & $\langle I_{Z_{i}^{\prime}}(-1), I_{Z_{i}/V}(-5) \rangle, i\in\{0,1,2,3,4\}$ \\
    \hline
  \end{tabular}
\end{center}
\end{thm}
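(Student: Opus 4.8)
The starting point is Proposition \ref{main-numerical-walls}, which already reduces the problem to the three numerical semicircles in the statement and, for each of them, records the finitely many possibilities for the truncated Chern character $\ch_{\leq 2}$ of a hypothetical destabilizing sub- or quotient object $A$ with $\ch(A)+\ch(B)=v$. The plan is therefore to work wall by wall, and for each admissible $\ch_{\leq 2}(A)$ to carry out three steps: (i) determine the isomorphism types of $A$ and of the complementary factor $B$; (ii) let the single remaining discrete invariant — the third Chern character, i.e. the length of a zero-dimensional ``tail'' of points — range over all values consistent with $A$ and $B$ being tilt-semistable; and (iii) exhibit an actual destabilizing sequence, which at once shows that each of the three numerical walls is an honest wall and that the displayed list of destabilizing pairs is complete.

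For step (i), the main tools are the Bogomolov inequalities $\Delta_H(A)\ge 0$ and $\Delta_H(B)\ge 0$ (Proposition \ref{tilt-Bog-ineq}) combined with the relation $\ch(A)+\ch(B)=v$ and the nesting of walls from Theorem \ref{tilt-wall-struct-thm}. When $\Delta_H(A)=0$ — this happens for $\ch_{\leq 2}(A)=(1,-2,2)$ on the smallest wall and for $\ch_{\leq 2}(A)=(1,-1,\tfrac12)$ on the largest wall — the classification of tilt-semistable objects of vanishing discriminant (\cite[Proposition 4.6]{Sch20a}) forces $A$ to be a shift of a twisted line bundle, and since along the wall $A$ lies in $\Coh^{\beta}(\PB^3)$ with positive rank one reads off $A\cong\CO(-2)$, respectively $A\cong I_{Z'}(-1)$ with $Z'$ a zero-dimensional subscheme. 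When $\Delta_H(A)>0$ I would instead study the rank-$0$ factor via the large-volume-limit description (Proposition \ref{limit-tilt-stab-Gie-stab-prop}): it is a Gieseker-semistable sheaf, its support and generic rank are read off from its $\ch_{\leq 1}$, and it is identified either as $I_{Z/V}(-k)$ on a plane $V$, as the ideal sheaf $I_{C_2}(-1)$ of a conic, or as the rank-$0$ sheaf sitting in the sequence \eqref{B_SES_S4}. Exactly as in the proof of Proposition \ref{main-numerical-walls}, the inequalities of \cite[Proposition 3.2]{Li19} eliminate rank-$2$ factors, and the BMT inequality of Theorem \ref{BMT-inequality}, applied to the factors themselves, excludes the remaining spurious candidates.

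Step (ii) fixes the admissible range of the point count $i$. The crude bound is that both tails must be non-negative, which by the additivity $\ch_3(A)+\ch_3(B)=11$ already gives $i\in\{0,1\}$ on the middle wall and $i\in\{0,1,2,3,4\}$ on the largest wall, and pins down the unique $B$ on the smallest wall; the three technical lemmas of Appendix \ref{three-tech-lem} are then used to check that every such $A$ and $B$ is actually tilt-semistable at, and just outside, the apex of the wall on $\Gamma$, i.e. that no smaller wall has already destabilized them. For step (iii) I would invoke the $\Ext$-computations of Section \ref{Ext-on-walls}: non-vanishing of the relevant $\Ext^1$ between the two factors produces a non-split extension $E$ with $\ch(E)=v$; on the wall $E$ is strictly $\sigma_{\alpha,\beta}$-semistable, being an extension of two tilt-semistable objects of equal tilt slope, while immediately off the wall the two tilt slopes separate and $E$ becomes unstable — so by Definition \ref{wall-def} the wall is actual, and conversely any strictly semistable object of class $v$ on the wall is obtained in this way.

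The hard part will be the classification in steps (i)--(ii): showing that the rank-$0$ factor on the smallest wall is \emph{precisely} the sheaf appearing in \eqref{B_SES_S4} rather than some other sheaf with the same Chern character, and controlling the zero-dimensional tails tightly enough that the index $i$ cannot exceed the stated bound. This is where the delicate interplay between the Bogomolov and BMT inequalities for $A$, for $B$ and for $E$ is concentrated — the content that the appendix lemmas are designed to package — and it is also where one must allow degenerate conics (reducible or non-reduced) and partially collapsed planes as legitimate boundary phenomena while verifying that they do not create any further walls.
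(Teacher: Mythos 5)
Your overall strategy---running case by case through the three numerical walls of Proposition \ref{main-numerical-walls}, classifying the two factors of the destabilizing sequence, and then bounding the zero-dimensional tails by non-negativity plus the BMT inequality---is exactly the paper's. Two steps, however, would not go through as written. First, the inference ``$\Delta_H(A)=0$ forces $A$ to be a shift of a twisted line bundle, so $A\cong\CO(-2)$'' is not valid: $\Delta_H$ only sees $\ch_{\le 2}$, so $I_{Z'}(-2)$ for any finite $Z'$ has the same vanishing discriminant and is tilt-semistable everywhere. Your own treatment of the largest wall exposes the inconsistency, since $\ch_{\le 2}(A)=(1,-1,\tfrac12)$ also has $\Delta_H=0$ and yet there you (correctly) allow $A\cong I_{Z_i'}(-1)$ with up to four points. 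The paper's route on the smallest wall is: $A\cong I_{Z'}(-2)$ with $|Z'|=-e-\tfrac43\ge 0$ by \cite[Lemma 5.4(1)]{Sch20a}, and then the BMT inequality applied to the \emph{quotient} $B$ (whose $Q_{\alpha,\beta}$-semicircle must not lie strictly above the wall) forces $e\ge -2$, hence $Z'=\emptyset$. You list ``BMT applied to the factors'' among your tools, so the repair is available, but it is the decisive step here rather than a clean-up of spurious candidates.

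Second, the two genuinely hard identifications---that the rank-zero factor with $\ch=(0,2,-7,\tfrac{37}{3})$ is precisely the Gieseker-semistable sheaf of \eqref{B_SES_S4}, and that the rank-one factor with $\ch=(1,-1,-\tfrac32,\tfrac{29}{6})$ is $I_{C_2}(-1)$---are asserted in your step (i) but no mechanism is given; the paper pins down $e$ first (via \cite[Proposition 3.2]{MS20} and \cite[Lemma 5.4(2)]{Sch20a}) and then outsources the structure results to \cite[Theorems 4.1 and 4.4(ii)]{Sch23}. You flag this as the hard part, but flagging is not proving. Relatedly, the appendix lemmas of Section \ref{three-tech-lem} do not play the role you assign them: they compute cohomology of ideal sheaves of points in a plane and are used in Section \ref{Ext-on-walls}. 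The paper's actual device for reducing to the Gieseker/large-volume classification is to observe that each wall meets a vertical line $\beta=-2$ (resp.\ $\beta=-3$) along which $\ch_1^{\beta}$ of each factor equals $1$, so the factors admit no walls along that line and Proposition \ref{limit-tilt-stab-Gie-stab-prop} applies. Finally, note that the paper's Theorem \ref{main-actual-wall} only classifies the possible destabilizing pairs and defers the actuality of the walls to the $\Ext^1$-computations later; your step (iii) does this explicitly, which is fine, except that the claim that $E$ ``becomes unstable immediately off the wall'' is neither needed for Definition \ref{wall-def} nor true on both sides.
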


\begin{proof}
In Proposition \ref{main-numerical-walls}, assume the third Chern character $\ch_{3}(A):=e\in \frac{1}{6}\ZN$.
The proof will discuss the three possible walls case-by-case.

{\bf Case (1).} For the wall $W(-\frac{7}{2},\frac{3}{2}):(\beta+\frac{7}{2})^{2}+\alpha^{2}=(\frac{3}{2})^{2}$, 
by Proposition \ref{main-numerical-walls},
we get $\ch_{\leq 2}(A)=(1,-2,2)$ or  $\ch_{\leq 2}(A)=(1,-1,-\frac{3}{2})$.
Hence, we have two cases:
\begin{enumerate}
\item[(i)] If $\ch(A)=(1,-2,2,e)$, 
then $\ch^{-3}(A)=(1,1,\frac{1}{2},e+\frac{3}{2})$ and $\ch(B)=(0,2,-7,11-e)$.
By \cite[Lemma  5.4 (1)]{Sch20a}, 
we obtain $A\cong I_{Z^{\prime}}(-2)$, where $Z^{\prime}$ is a $0$-dimensional subscheme of length $-e-\frac{4}{3}\geq 0$.
Since $B$ is $\sigma_{\alpha,\beta}$-semistable along the wall $W(-\frac{7}{2},\frac{3}{2})$, applying Theorem \ref{BMT-inequality} to $B$, 
we get $e\geq -2$ and thus $e=-\frac{4}{3}$. 
It follows that  $A\cong \CO(-2)$ and $\ch(B)=(0,2,-7,\frac{37}{3})$.
By \cite[Theorem 4.4 (ii)]{Sch23}, $B$ is a Gieseker semistable sheaf fitting into a short exact sequence
$
\xymatrix@C=0.4cm{
0 \ar[r] & \CO(-3)^{\oplus 2} \ar[r] & B \ar[r] & \CO(-4)^{\oplus 2}[1] \ar[r] & 0.
}
$
\item[(ii)] If $\ch(A)=(1,-1,-\frac{3}{2},e)$, then $\ch(A(1))=(1,0,-2,e-\frac{11}{6})$.
Since $A(1)$ is tilt semistable,
by \cite[Proposition 3.2]{MS20}, we have $e\leq \frac{29}{6}$.
Since $\ch^{-3}(B)=(0,1,-\frac{1}{2},5-e)$ and $B$ is tilt semistable,
by \cite[Lemma 5.4 (2)]{Sch20a}, 
we obtain $B\cong I_{Z/V}(-3)$, 
where $Z$ is a $0$-dimensional subscheme of length $e-\frac{29}{6}\geq 0$ and $V\subset \PB^{3}$ is a plane.
Clearly, $e=\frac{29}{6}$. 
Thus, we have $B\cong \CO_{V}(-3)$ and $\ch(A(1))=(1,0,-2,3)$.
By \cite[Theorem 4.1]{Sch23}, 
$A(1)$ fits into a short exact sequence
$
\xymatrix@C=0.4cm{
0 \ar[r] & \CO(-1) \ar[r] & A(1) \ar[r] & \CO_{V^{\prime}}(-2) \ar[r] & 0,
}
$
where $V^{\prime}\subset \PB^{3}$ is a plane. 
Hence, we get $A\cong I_{C_{2}}(-1)$, where $C_{2}\subset V^{\prime}$ is a conic.
\end{enumerate}

{\bf Case (2).}
For the wall $W(-\frac{9}{2},\frac{\sqrt{41}}{2}):(\beta+\frac{9}{2})^{2}+\alpha^{2}=(\frac{\sqrt{41}}{2})^{2}$,
we have $\ch(A)=(1,-1,-\frac{1}{2},e)$ and $\ch(B)=(0,1,-\frac{9}{2},11-e)$.
Since the vertical line $\beta=-2$ intersects $W(-\frac{9}{2},\frac{\sqrt{41}}{2})$ at the point $(\frac{\sqrt{31}}{2},-2)$, 
so $A$ and $B$ are $\sigma_{\frac{\sqrt{31}}{2},-2}$-semistable.
Note that $\ch^{-2}(A)=(1,1,-\frac{1}{2},e-\frac{5}{3})$ and $\ch^{-2}(B)=(0,1,-\frac{5}{2},4-e)$.
Therefore, both $A$ and $B$ have no walls along $\beta=-2$.
By \cite[Lemma  5.4 (1)]{Sch20a}, 
we obtain $A\cong I_{Z^{\prime}}(-1)$, 
where $Z^{\prime}$ is a $0$-dimensional subscheme of length $\frac{11}{6}-e\geq 0$.
Using \cite[Lemma  5.4 (2)]{Sch20a}, we deduce $B\cong I_{Z/V}(-4)$, 
where $Z$ is a $0$-dimensional subscheme of length $e-\frac{5}{6}\geq 0$ and $V\subset \PB^{3}$ is a plane.
It follows that $e=\frac{5}{6}$ or $e=\frac{11}{6}$.
If $e=\frac{5}{6}$, then $A\cong I_{\ell_{1}}(-1)$ and $B\cong \CO_{V}(-4)$, 
where $\ell_{1}$ is a line plus one point, and $V$ is a plane.
If $e=\frac{11}{6}$, then $A\cong I_{\ell_{0}}(-1)$ and $B\cong I_{Z_{1}/V}(-4)$, where $\ell_{0}$ is a line, $V$ is a plane and $Z_{1}$ is a point in $V$.

{\bf Case (3).} 
For the wall $W(-\frac{11}{2},\frac{9}{2}):(\beta+\frac{11}{2})^{2}+\alpha^{2}=(\frac{9}{2})^{2}$, 
we have $\ch(A)=(1,-1,\frac{1}{2},e)$ and $\ch(B)=(0,1,-\frac{11}{2},11-e)$.
Since the vertical line $\beta=-2$ intersects with $W(-\frac{11}{2},\frac{9}{2})$ at the point $(2\sqrt{2},-2)$, hence $A$ and $B$ are $\sigma_{2\sqrt{2},-2}$-semistable.
Note that $\ch^{-2}(A)=(1,1,\frac{1}{2},e+\frac{1}{3})$ and $\ch^{-2}(B)=(0,1,-\frac{7}{2},2-e)$.
Thus, both $A$ and $B$ have no walls along $\beta=-2$.
According to \cite[Lemma  5.4 (1)]{Sch20a}, 
we have $A\cong I_{Z^{\prime}}(-1)$, where $Z^{\prime}$ is a $0$-dimensional subscheme of length $-e-\frac{1}{6}\geq 0$.
By \cite[Lemma  5.4 (2)]{Sch20a}, we obtain $B\cong I_{Z/V}(-5)$, where $Z$ is a $0$-dimensional subscheme of length $e+\frac{25}{6}\geq 0$ and $V\subset \PB^{3}$ is a plane.
As a result, 
we get $e=-\frac{1}{6}-i$ for $0\leq i\leq 4$.
Therefore, we have $ A\cong I_{Z_{i}^{\prime}}(-1)$ and  $B\cong I_{Z_{4-i}/V}(-5)$, 
where $Z_{i}^{\prime}$ is a $0$-dimensional subscheme of length $i$ and $Z_{4-i}$ is a $0$-dimensional subscheme of length $4-i$ in $V$.
\end{proof}

Now, we will prove the main result of this section.

\begin{thm}\label{main-wall-in-Bridgeland-stability}
The walls in Bridgeland stability on both sides of $\Gamma$ (sufficiently close to $\Gamma$) are given as follows:
\begin{center}
\renewcommand\arraystretch{1.4}
\begin{tabular}{p{1.44in}p{1.28in}p{1.68in}}
    \hline
    Walls in tilt stability & Walls on the left side & Walls on the right side \\
    \hline
     $(\beta+\frac{7}{2})^{2}+\alpha^{2}=(\frac{3}{2})^{2}$& $\langle \CO(-2),B\rangle$ & $\langle I_{C_{2}}(-1),\CO_{V}(-3) \rangle$\\
    & $\langle I_{C_{2}}(-1),\CO_{V}(-3) \rangle$ & $\langle \CO(-2),B\rangle$ \\
    \hline
    $(\beta+\frac{9}{2})^{2}+\alpha^{2}=(\frac{\sqrt{41}}{2})^{2}$ &
    $\langle I_{\ell_{1}}(-1), \CO_{V}(-4)\rangle $ & $\langle I_{\ell_{0}}(-1),i_{V_\ast}(I_{Z_{1}/V}^{\vee}(-4))\rangle$\\
    & $\langle I_{\ell_{0}}(-1), I_{Z_{1}/V}(-4)\rangle $ & $\langle \{\CO(-1)\stackrel{s}{\rightarrow }\CO_{\ell_{0}}\},\CO_{V}(-4)\rangle$ \\
    \hline
    & $\langle I_{Z_{4}^{\prime}}(-1), \CO_{V}(-5) \rangle$ & \\
    & $\langle I_{Z_{3}^{\prime}}(-1), I_{Z_{1}/V}(-5) \rangle$ & \\
    $(\beta+\frac{11}{2})^{2}+\alpha^{2}=(\frac{9}{2})^{2}$ & $\langle I_{Z_{2}^{\prime}}(-1), I_{Z_{2}/V}(-5) \rangle$ & $\langle \CO(-1),i_{V_\ast}(I_{Z_{4}/V}^{\vee}(-5)) \rangle$ \\
    & $\langle I_{Z_{1}^{\prime}}(-1), I_{Z_{3}/V}(-5) \rangle$ & \\
    & $\langle \CO(-1), I_{Z_{4}/V}(-5) \rangle$ & \\
    \hline
\end{tabular}
\end{center}
where $B$ is a Gieseker semistable sheaf fitting into the exact sequence \eqref{B_SES_S4},
$Z_{i}$ is a $0$-dimensional subscheme of length $i$ in a plane $V$, $\ell_{i}$ is a line plus $i$ embedding/floating points, $Z_{i}^{\prime}$ is a $0$-dimensional subscheme of length $i$ and $C_{2}$ is a conic and the zero locus of $s$ is a point in the line $\ell_{0}$.
\end{thm}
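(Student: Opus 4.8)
The plan is to upgrade the three tilt walls of Theorem \ref{main-actual-wall} — the semicircles $W(-\tfrac{7}{2},\tfrac{3}{2})$, $W(-\tfrac{9}{2},\tfrac{\sqrt{41}}{2})$, $W(-\tfrac{11}{2},\tfrac{9}{2})$, each meeting $\Gamma$ at its apex — to Bridgeland walls by following each destabilizing sequence into the region $s\gg 1$ along the two arcs in which a sufficiently small tubular neighborhood of $\Gamma$ cuts out the semicircle: the \emph{left arc}, on which $\mathrm{Im}\,Z_{\alpha,\beta,s}(v)>0$ (equivalently $\beta^{2}-\alpha^{2}>10$, equivalently $\mu_{\alpha,\beta}(v)>0$), and the \emph{right arc}, on which $\mathrm{Im}\,Z_{\alpha,\beta,s}(v)<0$. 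Along the left arc the class $v$ lies in $\Coh^{\alpha,\beta}(\PB^{3})$ and, by Corollary \ref{tilt-stab-Bri-stab-cor}(1), every $\lambda_{\alpha,\beta,s}$-semistable object of class $v$ is a $\sigma_{\alpha,\beta}$-semistable sheaf; along the right arc it is $E[1]$ (for $\ch(E)=v$) that lies in the heart, and by Proposition \ref{Bri-mod=PT-mod} the Bridgeland moduli space is $P_{-1}(\PB^{3},5[\ell])$. Hence the relevant Bridgeland walls near $\Gamma$ are carried by these three semicircles, and it remains to determine the realized destabilizing pairs along each arc.

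On the left arc I would show that every tilt-destabilizing pair of Theorem \ref{main-actual-wall} survives unchanged. Each of its terms $A,B$ is (a twist of) the ideal sheaf of a curve, the ideal sheaf $I_{Z/V}$ of a $0$-dimensional subscheme of a plane, a line bundle on a plane, or the rank-$0$ sheaf $B$ of \eqref{B_SES_S4}; along the left arc its tilt slope equals $\mu_{\alpha,\beta}(v)>0$, so it lies in $\Coh^{\alpha,\beta}(\PB^{3})$ with no shift and the short exact sequence persists there. As each such sheaf is torsion-free or pure, $\Hom(\CO_{x},A)=0$ for all $x$; together with its $\sigma_{\alpha,\beta}$-semistability (from Proposition \ref{limit-tilt-stab-Gie-stab-prop} and the classification lemmas of \cite{Sch20a,Sch23}), arguing as in the proof of Proposition \ref{Bri-mod=PT-mod} via Lemma \ref{tilt-stab-Bri-stab} gives $\lambda_{\alpha,\beta,s}$-semistability for $s\gg 1$, and one checks $\lambda_{\alpha,\beta,s}(A)=\lambda_{\alpha,\beta,s}(E)$ on the whole arc. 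This yields the left-hand columns, reading $I_{Z_{0}'}(-1)=\CO(-1)$, $I_{Z_{0}/V}(-j)=\CO_{V}(-j)$, and $\ell_{1}$ a line with one point.

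The right arc is the crux. Since $\mathrm{Im}\,Z_{\alpha,\beta,s}(v)$ has flipped sign, whenever a term $A$ carries an ``embedded/floating point'' — i.e.\ $A=I_{\ell_{1}}(-1)$, $A=I_{Z_{i}'}(-1)$ with $i\geq 1$, or $A=I_{Z_{i}/V}(-j)$ with $Z_{i}\neq\emptyset$ — one has $\mathrm{Im}\,Z_{\alpha,\beta,s}(A)<0$, so $A$ could enter the heart only as $A[1]$, but then $\Hom(\CO_{x},A[1])=\Ext^{1}(\CO_{x},A)\neq 0$ at the defect point, which by Lemma \ref{stab-pair-equ-def} is incompatible with $\lambda_{\alpha,\beta,s}$-semistability near the PT locus. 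Such a term must then be replaced by the PT-compatible object with the same Chern character: the floating point of $I_{\ell_{1}}(-1)$ is absorbed into the cokernel, producing the stable-pair complex $\{\CO(-1)\xrightarrow{s}\CO_{\ell_{0}}\}$ (with $\CH^{0}=I_{\ell_{0}}(-1)$ and $\CH^{1}=\CO_{p}$, $p$ the zero locus of $s$), while a torsion quotient $I_{Z_{i}/V}(-j)$ is replaced by the derived dual $i_{V*}(I_{Z_{i}/V}^{\vee}(-j))$ on the plane. I would then verify, again via Lemma \ref{tilt-stab-Bri-stab}, that these replacements have $\Hom(\CO_{x},-)=0$ and are $\lambda_{\alpha,\beta,s}$-semistable for $s\gg 1$, and that their truncated Chern characters agree with those they replace, so the new pair defines the same semicircle. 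Terms with no such defect — $\CO(-2)$, $I_{C_{2}}(-1)$, $I_{\ell_{0}}(-1)$, $\CO(-1)$, the line bundles $\CO_{V}(-j)$, and $B$ — have $\Ext^{1}(\CO_{x},-)=0$ and so persist. This produces exactly: both pairs of $W(-\tfrac{7}{2},\tfrac{3}{2})$ on both arcs; the two pairs of $W(-\tfrac{9}{2},\tfrac{\sqrt{41}}{2})$, each with one factor dualized; and, for $W(-\tfrac{11}{2},\tfrac{9}{2})$, only the extremal pair $\langle\CO(-1),i_{V*}(I_{Z_{4}/V}^{\vee}(-5))\rangle$ — the remaining pairs $\langle I_{Z_{i}'}(-1),I_{Z_{4-i}/V}(-5)\rangle$, $1\leq i\leq 4$, having a rank-$1$ subobject $I_{Z_{i}'}(-1)$ that admits no PT-compatible replacement with the correct Chern character.

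For exhaustiveness, Corollary \ref{tilt-stab-Bri-stab-cor} and Lemma \ref{tilt-stab-Bri-stab} force every Bridgeland wall near $\Gamma$ to refine one of the three semicircles, and the $\ch_{3}$ of the destabilizing pieces is pinned down by the Bogomolov and BMT inequalities exactly as in the proof of Theorem \ref{main-actual-wall}, leaving only the pairs above. I expect the main obstacle to be the right-arc analysis: identifying the stable-pair complexes and the derived-dual objects $i_{V*}(I_{Z_{i}/V}^{\vee}(-j))$ and computing their Chern characters, and above all verifying their $\lambda_{\alpha,\beta,s}$-semistability for $s\gg 1$ while ruling out the non-surviving pairs — for which the $\Ext$-computations of Section \ref{Ext-on-walls} and the technical lemmas of Appendix \ref{three-tech-lem} should be precisely the tools required. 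The left-arc and exhaustiveness steps are, by comparison, bookkeeping on top of Theorem \ref{main-actual-wall}.
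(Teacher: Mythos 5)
Your overall architecture coincides with the paper's: the left-hand column is read off from Theorem \ref{main-actual-wall} via Lemma \ref{tilt-stab-Bri-stab}, and the right-hand column is obtained by re-classifying the destabilizing objects after the shift by $[1]$, with embedded/floating points forcing the replacement of $I_{\ell_{1}}(-1)$ by the pair complex $\{\CO(-1)\xrightarrow{s}\CO_{\ell_{0}}\}$ and of $I_{Z/V}(-j)$ by $i_{V_{\ast}}I_{Z/V}^{\vee}(-j)$. However, the mechanism you propose for detecting which constituents must be replaced is wrong as stated: $\mathrm{Im}\,Z_{\alpha,\beta,s}$ depends only on $\ch_{\leq 2}$, so it cannot distinguish $I_{\ell_{1}}(-1)$ from $I_{\ell_{0}}(-1)$, nor $I_{Z_{i}/V}(-j)$ from $\CO_{V}(-j)$ --- these have identical imaginary parts. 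On the right arc \emph{every} constituent of the tilt sequence enters the heart shifted by $[1]$, since on the wall all tilt slopes equal $\mu_{\alpha,\beta}(v)<0$ there; the correct dichotomy is solely the one you mention second, namely whether $\Hom(\CO_{x},A[1])=\Ext^{1}(\CO_{x},A)$ vanishes, because $\CO_{x}$ has $\lambda_{\alpha,\beta,s}=+\infty$ and any nonzero such morphism destabilizes $A[1]$.

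The more substantive gap is that identifying the replacements and, above all, excluding the four pairs $\langle I_{Z_{i}^{\prime}}(-1), I_{Z_{4-i}/V}(-5)\rangle$, $1\leq i\leq 4$, on the right side of the third wall requires classifying \emph{all} $\lambda_{\alpha,\beta,s}$-semistable objects with the classes $(1,-1,\tfrac{1}{2},e)$, $(1,-1,-\tfrac{1}{2},e)$, $(1,-2,2,e)$ and $(0,1,c,e)$ near the wall --- in particular showing that for the class $(1,-1,\tfrac{1}{2},e)$ the only semistable object is $\CO(-1)$, so $e=-\tfrac{1}{6}$ is forced and no semistable object exists for $e=-\tfrac{1}{6}-i$ with $i\geq 1$. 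The paper imports exactly this from \cite{Rez24a} (Lemma 3.9) and \cite{Rez24b} (Lemmas 3.9, 3.11, 3.12): the rank-one piece must be a stable-pair complex whose zero locus has non-negative length, the rank-zero piece a derived dual $i_{V_{\ast}}I_{Z/V}^{\vee}$ with $Z$ of non-negative length, and these two inequalities together pin down $\ch_{3}$ and hence the surviving pairs. Your phrase ``admits no PT-compatible replacement with the correct Chern character'' gestures at this but is an assertion, not an argument; without these classification lemmas (or a substitute, e.g.\ exploiting $\Delta_{H}=0$ for the class $(1,-1,\tfrac{1}{2})$) the exclusion step, which you yourself identify as the crux, remains open.
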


\begin{proof}
According to Lemma \ref{tilt-stab-Bri-stab} and Theorem \ref{main-actual-wall}, we obtain the walls on the left side of $\Gamma$.
In the following, we will describe the walls on the right side of $\Gamma$.
By Lemma \ref{tilt-stab-Bri-stab},
the corresponding walls in Bridgeland stability are given by the destabilizing pairs $\langle A,B \rangle$ as in Proposition \ref{main-numerical-walls}.
We assume the third Chern character $\ch_{3}(A):=e\in \frac{1}{6}\ZN$. The proof will discuss the walls case-by-case.

{\bf Case (1).} For the wall $(\beta+\frac{7}{2})^{2}+\alpha^{2}=(\frac{3}{2})^{2}$, 
by Proposition \ref{main-numerical-walls},
we have $\ch_{\leq 2}(A)=(1,-2,2)$ or $\ch_{\leq 2}(A)=(1,-1,-\frac{3}{2})$.
Thus, we have two cases.
(i) If $\ch(A)=(1,-2,2,e)$, then
 $\ch(A(1))=(1,-1,\frac{1}{2},e+\frac{5}{6})$.
According to \cite[Lemma 3.9]{Rez24a}, we have $A(1)\cong \CO(-1)$.
Hence, we obtain $A\cong \CO(-2)$ and $\ch(B)=(0,2,-7,\frac{37}{3})$. 
Since $B$ is $\lambda_{\alpha,\beta,s}$-semistable, by Lemma \ref{tilt-stab-Bri-stab}, we obtain $\CH^{0}_{\beta}(B)$ is $\sigma_{\alpha,\beta}$-semistable and $\CH^{1}_{\beta}(B)$ is supported in dimension $0$.
Set $\ch(\CH^{1}_{\beta}(B)):=(0,0,0,t)$, where integer $t \geq 0$.
Then, we get $\ch(\CH^{0}_{\beta}(B))=(0,2,-7,\frac{37}{3}+t)$. 
By \cite[Theorem 4.4 (ii)]{Sch23}, we conclude that $t=0$ and $B\cong \CH^{0}_{\beta}(B)$ is a Gieseker semistable sheaf fitting into a short exact sequence
\eqref{B_SES_S4}.
(ii) If $\ch(A)=(1,-1,-\frac{3}{2},e)$, by \cite[Lemma 3.12]{Rez24b}, then we get $A\cong (\CO(-1)\stackrel{s}{\rightarrow}\CO_{C_{2}}(l-1))$, where $C_{2}$ is a conic and the zero locus of $s$ is a $0$-dimensional subscheme of length $l:=\frac{29}{6}-e\geq 0$.
Since $\ch(B)=(0,1,-\frac{7}{2},11-e)$, applying \cite[Lemma 3.11]{Rez24b} to $B$, we derive $B\cong i_{V_{\ast}}I_{Z/V}^{\vee}(-3)$, where $Z$ is a $0$-dimensional subscheme of length $e-\frac{29}{6}\geq 0$ and $I_{Z/V}^{\vee}$ is the derived dual of $I_{Z/V}$.
Therefore, we deduce $e=\frac{29}{6}$. 
Consequently, we have $A\cong I_{C_{2}}(-1)$ and $B\cong \CO_{V}(-3)$.

{\bf Case (2).} For the wall $(\beta+\frac{9}{2})^{2}+\alpha^{2}=(\frac{\sqrt{41}}{2})^{2}$, we have
$\ch(A)=(1,-1,-\frac{1}{2},e)$ and $\ch(B)=(0,1,-\frac{9}{2},11-e)$.
By \cite[Lemma 3.12]{Rez24b}, we obtain $A\cong (\CO(-1) \xrightarrow{s} \CO_{\ell}(l^{\prime}-1))$, 
where $\ell$ is a line and the zero locus of $s$ is a $0$-dimensional subscheme of length $l^{\prime}:=\frac{11}{6}-e\geq 0$. 
Since $\ch(B)=(0,1,-\frac{7}{2},11-e)$, 
using \cite[Lemma 3.11]{Rez24b}, we have $B\cong i_{V_\ast}I_{Z/V}^{\vee}(-4)$, where $Z$ is a $0$-dimensional subscheme of length $e-\frac{5}{6}\geq 0$.
Therefore, we obtain $e\in \{\frac{5}{6},\frac{11}{6}\}$.
(i) If $e=\frac{5}{6}$, then $B\cong \CO_{V}(-4)$ and $A\cong (\CO(-1) \xrightarrow{s} \CO_{\ell})$, 
where the zero locus of $s$ is one point. 
(ii) If $e=\frac{11}{6}$, then $A\cong (\CO(-1) \xrightarrow{s} \CO_{\ell}(-1))\cong I_{\ell}(-1)$ and $B\cong i_{V_\ast}I_{Z_{1}/V}^{\vee}(-4)$, 
where $Z_{1}$ is a $0$-dimensional subscheme of length $1$.
 
{\bf Case (3).} For the wall $(\beta+\frac{11}{2})^{2}+\alpha^{2}=(\frac{9}{2})^{2}$, 
we have $\ch(A)=(1,-1,\frac{1}{2},e)$ and $\ch(B)=(0,1,-\frac{11}{2},11-e)$.
 By \cite[Lemma 3.9]{Rez24b}, we have $e=-\frac{1}{6}$ and $A\cong \CO(-1)$. 
Thus, we get $\ch(B)=(0,1,-\frac{11}{2}, \frac{67}{6})$.
Then by \cite[Lemma 3.11]{Rez24a}, we obtain $B\cong i_{V_\ast}I_{Z_{4}/V}^{\vee}(-5)$, where $Z_{4}$ is a $0$-dimensional subscheme of length $4$.
\end{proof}

%========================================================================== 

\section{Ext groups on walls}\label{Ext-on-walls}

In Theorem \ref{main-wall-in-Bridgeland-stability}, we described all the walls in Bridgeland stability, which are sufficiently close to the left branch $\Gamma$ of the hyperbola $\beta^{2}-\alpha^{2}=10$.
To study the behaviour of Bridgeland moduli spaces via wall-crossings on the right side of $\Gamma$, this section will calculate all relevant Ext groups related to these walls.

The first wall sufficiently close to $\Gamma$ from the right side is given by two different destabilizing pairs $\langle I_{C_{2}}(-1), \CO_{V}(-3) \rangle$ and $\langle \CO(-2),B\rangle$, where $B$ is a Gieseker semistable sheaf fitting into a short exact sequence
\begin{equation}\label{B_SES}
\xymatrix@C=0.5cm{
0 \ar[r] & \CO(-3)^{\oplus 2} \ar[r] & B \ar[r] & \CO(-4)^{\oplus 2}[1] \ar[r] & 0.
}
\end{equation}

\begin{prop}\label{M1_2_wall}
For the destabilizing pair $\langle \CO(-2),B\rangle$, 
we have
$$
\Ext^{1}(B,\CO(-2))=\CN^{12} \textrm{ and }\,
\Ext^{1}(\CO(-2),B)=0.
$$
If moreover, $B$ is Gieseker stable, then $\Ext^{1}(B,B)=\CN^{9}$.
\end{prop}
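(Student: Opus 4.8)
The plan is to upgrade the defining sequence \eqref{B_SES}, which a priori lives only in a tilted heart, to an honest locally free resolution of the sheaf $B$, and then to reduce every $\Ext$-group in question to the cohomology of line bundles on $\PB^{3}$ together with the simplicity of $B$. First I would note that the triangle underlying \eqref{B_SES} presents $B$ as the cone of a morphism $\psi\colon \CO(-4)^{\oplus 2}\to \CO(-3)^{\oplus 2}$; since $B$ is a sheaf, the $(-1)$-st cohomology of this cone vanishes, so $\psi$ is injective as a map of sheaves and
\[
0 \longrightarrow \CO(-4)^{\oplus 2} \xrightarrow{\ \psi\ } \CO(-3)^{\oplus 2} \longrightarrow B \longrightarrow 0
\]
is a short exact sequence of coherent sheaves. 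Two consequences worth recording: $B$ is torsion of rank $0$, so $\Hom(B,\CO(-2))=0$; and twisting this resolution reduces the cohomology of $B(k)$ to that of $\CO(k-3)$ and $\CO(k-4)$ on $\PB^{3}$.

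For the two groups involving $\CO(-2)$ I would apply $\RHom(\CO(-2),-)=R\Gamma\bigl((-)(2)\bigr)$ and $\RHom(-,\CO(-2))$ to this resolution. In the first case the two outer terms become $R\Gamma(\CO(-1)^{\oplus 2})$ and $R\Gamma(\CO(-2)^{\oplus 2})$, both of which vanish on $\PB^{3}$, so $\RHom(\CO(-2),B)=0$; in particular $\Ext^{1}(\CO(-2),B)=0$. In the second case $\RHom(\CO(-k),\CO(-2))=H^{\bullet}(\CO(k-2))$ is concentrated in degree $0$, which yields $\Ext^{\geq 2}(B,\CO(-2))=0$ together with the four-term exact sequence
\[
0 \to \Hom(B,\CO(-2)) \to H^{0}(\CO(1))^{\oplus 2} \to H^{0}(\CO(2))^{\oplus 2} \to \Ext^{1}(B,\CO(-2)) \to 0 .
\]
Since $\Hom(B,\CO(-2))=0$ and $h^{0}(\CO(1))=4$, $h^{0}(\CO(2))=10$, this forces $\ext^{1}(B,\CO(-2))=20-8=12$.

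For $\Ext^{1}(B,B)$ I would first compute the outer terms appearing after applying $\RHom(-,B)$ to the resolution: twisting the resolution by $\CO(3)$ and $\CO(4)$ and using $H^{\bullet}(\CO(-1))=0$ shows that $R\Gamma(B(3))\cong R\Gamma(\CO(0)^{\oplus 2})$ and that $R\Gamma(B(4))$ is built from $R\Gamma(\CO(0)^{\oplus 2})$ and $R\Gamma(\CO(1)^{\oplus 2})$ (the map on $H^{0}$ being injective since $B(4)$ is a sheaf); hence $H^{\geq 1}(B(3))=H^{\geq 1}(B(4))=0$, and $\RHom(\CO(-3)^{\oplus 2},B)$, $\RHom(\CO(-4)^{\oplus 2},B)$ are concentrated in degree $0$ of dimensions $2\cdot 2=4$ and $2\cdot 6=12$. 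The long exact sequence coming from $\RHom(-,B)$ then forces $\Ext^{\geq 2}(B,B)=0$ and produces
\[
0 \to \Hom(B,B) \to \CN^{4} \to \CN^{12} \to \Ext^{1}(B,B) \to 0 .
\]
Because $B$ is Gieseker stable it is simple, so $\Hom(B,B)=\CN$, whence $\ext^{1}(B,B)=12-4+1=9$. As an independent check, from $[B]=2[\CO(-3)]-2[\CO(-4)]$ in $K(\PB^{3})$ and $\chi(\CO(a),\CO(b))=\binom{b-a+3}{3}$ one computes $\chi(B,B)=-8$, which is consistent with $\ext^{1}(B,B)=\homd(B,B)-\chi(B,B)=1+8=9$.

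The only genuinely non-routine ingredient is the first step — promoting the tilt-heart sequence \eqref{B_SES} to a resolution of $B$ by line bundles, which is exactly what makes all the $\RHom$'s computable — together with the appeal to \emph{stability} rather than mere semistability to get $\homd(B,B)=1$. Once these are in place, every vanishing reduces to the cohomology of $\CO(k)$ on $\PB^{3}$ for $k\in\{-2,-1,0,1,2\}$, which is elementary.
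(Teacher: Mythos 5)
Your proof is correct and follows essentially the same route as the paper: both rotate the tilted-heart sequence \eqref{B_SES} into the locally free resolution $0\to\CO(-4)^{\oplus 2}\to\CO(-3)^{\oplus 2}\to B\to 0$ and compute all the $\Ext$ groups by applying $\Hom$ functors to it, reducing everything to cohomology of line bundles on $\PB^{3}$ plus simplicity of the stable sheaf $B$. The only cosmetic difference is that for $\Ext^{1}(B,B)$ the paper applies $\Hom(B,-)$ to the triangle whereas you apply $\Hom(-,B)$ to the resolution; both give $\CN^{9}$, consistent with $\chi(B,B)=-8$.
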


\begin{proof}
Applying $\Hom(\CO(-2),-)$ and $\Hom(-,\CO(-2))$ to \eqref{B_SES}, respectively, it follows that the first statement holds.
For the second, 
applying  $\Hom(-,\CO(-3)^{\oplus 2})$ and $\Hom(-,\CO(-4)^{\oplus 2}[1])$ to \eqref{B_SES},  
we obtain 
$$
\Ext^{1}(B,\CO(-3)^{\oplus 2})\cong \CN^{12} \textrm{, \;} \Hom(B,\CO(-4)^{\oplus 2}[1])\cong \CN^{4},
$$
and the other Ext groups are trivial.
If $B$ is Gieseker stable, 
then $\Hom(B,B)= \CN$. 
Then, applying $\Hom(B,-)$ to \eqref{B_SES}, we get $\Ext^{1}(B,B)=\CN^{9}$.
\end{proof}

\begin{prop}\label{M1_1_wall}
For the destabilizing pair $\langle I_{C_{2}}(-1), \CO_{V}(-3) \rangle$,
we have:
\begin{enumerate}
\item[(1)] $\Ext^{1}(I_{C_{2}}(-1),I_{C_{2}}(-1))=\CN^{8}$;
\item[(2)] $\Ext^{1}(\CO_{V}(-3),\CO_{V}(-3))=\CN^{3}$;
\item[(3)] $\Ext^{1}(\CO_{V}(-3),I_{C_{2}}(-1))=\CN^{8}$; and
\item[(4)] $
\Ext^{1}(I_{C_{2}}(-1),\CO_{V}(-3))=
\begin{cases}
  \CN^{3}    & \text{if $C_{2}\subset V$}, \\
  \CN^{2}    & \text{if $C_{2} \not \subset V$}.
\end{cases}
$
\end{enumerate}
\end{prop}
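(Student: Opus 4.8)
The plan is to compute each of the four $\Ext^1$ groups by resolving one of the two sheaves and applying the relevant $\Hom$ functor to the resulting exact triangle, reading off the long exact sequence. For $I_{C_2}(-1)$, the natural input is the Koszul-type resolution of a conic $C_2 \subset \PB^3$: if $C_2$ lies in a plane $V$ and is cut out there by a quadric, then $I_{C_2}$ has a resolution by line bundles (roughly $0 \to \CO(-3) \to \CO(-1)\oplus\CO(-2) \to I_{C_2} \to 0$ when $C_2$ is a plane conic, and $0 \to \CO(-2)^{\oplus 2} \to \CO(-1)^{\oplus 2} \to I_{C_2}\to 0$ as a complete intersection of two quadrics in the degenerate/reducible cases). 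For $\CO_V(-3)$ one uses $0 \to \CO(-4) \to \CO(-3) \to \CO_V(-3) \to 0$. In every case the computation reduces to cohomology of line bundles $\CO(k)$ on $\PB^3$ and of $\CO_V(k)$ on $V\cong\PB^2$, which is completely standard via Bott's formula; vanishing of the higher $\Ext$'s will be automatic once the resolutions are short.

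For parts (1)--(3) I would proceed as follows. For (1), apply $\Hom(-, I_{C_2}(-1))$ to the resolution of $I_{C_2}(-1)$: since $\Hom(I_{C_2}(-1),I_{C_2}(-1))=\CN$ (the conic being a connected CM curve, its ideal sheaf is stable, hence simple), the long exact sequence pins down $\ext^1$ once one knows $\homd$ and $\ext^1$ of the line bundles appearing against $I_{C_2}(-1)$; alternatively one can use that for a smooth conic, $I_{C_2}/I_{C_2}^2$ is the conormal bundle and $\Ext^1(I_{C_2},I_{C_2}) \cong H^0(N_{C_2/\PB^3})$ has dimension $\deg N = 2\cdot 2 + (2\cdot 0 - 2) = \dots$ — but I prefer the resolution route for uniformity. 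For (2), the identification $\Ext^\bullet(\CO_V(-3),\CO_V(-3)) \cong \Ext^\bullet_V(\CO_V, \CO_V) \oplus \Ext^{\bullet-1}_V(\CO_V, \CO_V(-3)\otimes N_{V/\PB^3}^\vee[\dots])$ via the standard triangle reduces everything to cohomology on $V\cong\PB^2$, giving $\ext^1 = h^0(\CO_V(1)) = 3$ (deformations of the plane $V$). For (3), apply $\Hom(\CO_V(-3),-)$ to the resolution of $I_{C_2}(-1)$ — here the answer $\CN^8$ should come out independently of whether $C_2\subset V$, which serves as a useful internal consistency check.

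The crux is part (4) and the case distinction $C_2 \subset V$ versus $C_2 \not\subset V$. Here I would apply $\Hom(I_{C_2}(-1), -)$ to the sequence $0 \to \CO(-4) \to \CO(-3) \to \CO_V(-3)\to 0$, obtaining
\[
0 \to \Hom(I_{C_2}(-1),\CO_V(-3)) \to \Ext^1(I_{C_2}(-1),\CO(-4)) \to \Ext^1(I_{C_2}(-1),\CO(-3)) \to \Ext^1(I_{C_2}(-1),\CO_V(-3)) \to \Ext^2(I_{C_2}(-1),\CO(-4)),
\]
and then compute the groups $\Ext^i(I_{C_2}(-1),\CO(k))$ by dualizing, i.e. via $\Ext^i(I_{C_2}(-1),\CO(k)) \cong H^i(I_{C_2}^\vee(k+1))$ and the exact sequence $0 \to I_{C_2} \to \CO \to \CO_{C_2}\to 0$ together with $\CExt^\bullet(\CO_{C_2},\CO)$. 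The point is that $\Hom(I_{C_2}(-1),\CO_V(-3))$ — equivalently whether the composite $I_{C_2}(-1)\hookrightarrow \CO(-1) \to \CO_V(-1) \to \CO_V(-3)$-type maps survive — depends on whether $C_2$ meets $V$ in the expected way: when $C_2\subset V$ there is an extra section coming from the inclusion $I_{C_2}\supset I_V$, contributing one more dimension. Concretely I expect to find $\homd(I_{C_2}(-1),\CO_V(-3)) = 1$ when $C_2\subset V$ and $0$ otherwise, with the Euler characteristic of $\RHom(I_{C_2}(-1),\CO_V(-3))$ being the same in both cases, forcing the $\ext^1$ to drop from $3$ to $2$. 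I anticipate this dimension count — carefully tracking one "boundary" class through the long exact sequence in the two geometric configurations — to be the main obstacle; the rest is bookkeeping with Bott vanishing on $\PB^3$ and $\PB^2$.
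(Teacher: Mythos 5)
Your proposal is correct and follows essentially the same route as the paper: the paper's key input is the extension $0\to\CO(-1)\to I_{C_{2}}\to\CO_{V^{\prime}}(-2)\to 0$ (where $V^{\prime}$ is the plane of the conic), which is just the two-step filtration underlying your Koszul resolution $0\to\CO(-3)\to\CO(-1)\oplus\CO(-2)\to I_{C_{2}}\to 0$, and everything is then reduced to cohomology of line bundles via long exact sequences, exactly as you plan; in part (4) the paper reduces to $\Ext^{1}(\CO_{V^{\prime}},\CO_{V})$ whereas you locate the case split in $\Hom(I_{C_{2}}(-1),\CO_{V}(-3))$, but your prediction ($\homd=1$ iff $C_{2}\subset V$, constant Euler characteristic, hence $\ext^{1}=3$ versus $2$) is exactly what the computation gives. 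One correction to a parenthetical: a degenerate conic is \emph{not} a complete intersection of two quadrics (that would be a degree $4$ curve, and your proposed complex $\CO(-2)^{\oplus 2}\to\CO(-1)^{\oplus 2}$ has rank $0$, so it cannot resolve an ideal sheaf); every conic here is a plane curve cut out by one linear and one quadratic form, so the single Koszul resolution $0\to\CO(-3)\to\CO(-1)\oplus\CO(-2)\to I_{C_{2}}\to 0$ applies uniformly and no case distinction on the type of conic is needed.
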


\begin{proof}
The first two Ext groups are clear.
For the last two Ext groups, assume that the conic $C_{2}$ lies in the hyperplane $V^{\prime}$. 
Then, we have a short exact sequence
\begin{equation}\label{CsubP}
\xymatrix@C=0.5cm{
0 \ar[r] & \CO(-1) \ar[r] & I_{C_{2}} \ar[r] & \CO_{V^{\prime}}(-2) \ar[r] & 0.
} 
\end{equation}
Because of $\Ext^{1}(\CO(-2),\CO(-1))=0$ and $\Ext^{1}(\CO(-2),\CO_{V^{\prime}}(-2))=0$,
applying $\Hom(\CO(-2),-)$ to \eqref{CsubP}, we obtain
$\Hom(\CO(-2),I_{C_{2}})=\CN^{5}$ and $\Ext^{1}(\CO(-2),I_{C_{2}})=0$.
Similarly, applying $\Hom(\CO(-3),-)$ to \eqref{CsubP}, we get
$
\Hom(\CO(-3),I_{C_{2}})= \CN^{13}.
$
Thanks to $\Hom(\CO_{V}(-3),I_{C_{2}}(-1))=0$ and $\Ext^{1}(\CO(-2),I_{C_{2}})=0$, by applying $\Hom(-,I_{C_{2}}(-1))$ to the short exact sequence
$$
\xymatrix@C=0.5cm{
0 \ar[r] & \CO(-4) \ar[r] & \CO(-3) \ar[r] & \CO_{V}(-3) \ar[r] & 0,
}
$$
we conclude $ \Ext^{1}(\CO_{V}(-3),I_{C_{2}}(-1))=\CN^{8}$.

For the last Ext group, since $\Ext^{i}(\CO(-1),\CO_{V}(-2))=0$ for $i=0,1$, 
applying $\Hom(-,\CO_{V}(-2))$ to  \eqref{CsubP}, we obtain
\begin{equation}\label{1st-wall-Ext-group-equ-1}
\Ext^{1}(I_{C_{2}}(-1),\CO_{V}(-3))\cong \Ext^{1}(\CO_{V^{\prime}},\CO_{V}).
\end{equation}
We consider two cases:
\begin{enumerate}
    \item[(i)] If $V=V^{\prime}$, then $\Ext^{1}(\CO_{V^{\prime}},\CO_{V})=\CN^{3}$;
    \item[(ii)] If $V\neq V^{\prime}$, then $\Hom(\CO_{V^{\prime}},\CO_{V})=0$. 
Applying $\Hom(-,\CO_{V})$ to the exact sequence
$
\xymatrix@C=0.4cm{
0 \ar[r] & \CO(-1) \ar[r] & \CO \ar[r] & \CO_{V^{\prime}} \ar[r] & 0,
}
$
we deduce $\Ext^{1}(\CO_{V^{\prime}},\CO_{V})=\CN^{2}$.
\end{enumerate}
As a result, the last Ext group follows from \eqref{1st-wall-Ext-group-equ-1}.
\end{proof}

The second wall sufficiently close to $\Gamma$ from the right side is given by the destabilizing pair $\langle I_{\ell_{0}}(-1),i_{V_{\ast}} I_{Z_{1}/V}^{\vee}(-4) \rangle$.

\begin{prop}\label{M2_wall}
For the destabilizing pair $\langle I_{\ell_{0}}(-1),i_{V_{\ast}} I_{Z_{1}/V}^{\vee}(-4) \rangle$, we have:
\begin{enumerate}
\item[(1)] $\Ext^{1}(I_{\ell_{0}}(-1),I_{\ell_{0}}(-1))=\CN^{4}$;
\item[(2)] $\Ext^{1}(i_{V_{\ast}} I_{Z_{1}/V}^{\vee}(-4), i_{V_{\ast}} I_{Z_{1}/V}^{\vee}(-4) )=\CN^{5}$;
\item[(3)] $\Ext^{1}(i_{V_{\ast}}I_{Z_{1}/V}^{\vee}(-4),I_{\ell_{0}}(-1)) = \CN^{13}$; and
\item[(4)] $\Ext^{1}(I_{\ell_{0}}(-1), i_{V_{\ast}}I_{Z_{1}/V}^{\vee}(-4))= \begin{cases}
\CN & \textrm{ if } Z_{1}\not \in \ell_{0,}\\
\CN^{2} & \textrm{ if } Z_{1} \in\ell_{0}.
\end{cases}$
\end{enumerate}
\end{prop}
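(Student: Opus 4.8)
The plan is to funnel all four groups through a single exact triangle for $G:=i_{V_\ast}I_{Z_1/V}^{\vee}(-4)$, reducing everything to cohomology of line bundles and skyscrapers on $\PB^{3}$. First I would record this triangle: dualising $0\to I_{Z_1/V}\to\CO_{V}\to\CO_{Z_1}\to 0$ on the plane $V\cong\PB^{2}$ by $\RCH_{V}(-,\CO_{V})$ and using local duality for the point $Z_1\subset V$ (so $\RCH_{V}(\CO_{Z_1},\CO_{V})\cong\CO_{Z_1}[-2]$, the twist being trivial on a length-one scheme), one sees $I_{Z_1/V}^{\vee}$ is a genuine two-term complex with $\CH^{0}(I_{Z_1/V}^{\vee})\cong\CO_{V}$ and $\CH^{1}(I_{Z_1/V}^{\vee})\cong\CO_{Z_1}$; pushing to $\PB^{3}$ and twisting gives
\begin{equation}\label{G-triangle}
\xymatrix@C=0.5cm{
i_{V_\ast}\CO_{V}(-4)\ar[r] & G \ar[r] & \CO_{Z_1}[-1] \ar[r] & i_{V_\ast}\CO_{V}(-4)[1].}
\end{equation}
Alternatively, from the Koszul resolution of $I_{Z_1/V}$ on $V$ one gets $I_{Z_1/V}^{\vee}\simeq[\CO_{V}(1)^{\oplus2}\to\CO_{V}(2)]$, which after twisting and substituting $\CO_{V}(-j)\simeq[\CO(-j-1)\to\CO(-j)]$ produces a locally free model $G\simeq[\CO(-4)^{\oplus2}\to\CO(-3)^{\oplus3}\to\CO(-2)]$ (in cohomological degrees $-1,0,1$), convenient for turning every $\RHom$ into an explicit complex of vector spaces. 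I would also keep at hand the Koszul resolution $0\to\CO(-3)\to\CO(-2)^{\oplus2}\to I_{\ell_0}\to0$ and the cohomology groups $H^{\bullet}(I_{\ell_0}(k))$, which are concentrated in degree $0$ for $k\ge-1$ with $h^{0}(I_{\ell_0}(k))=\binom{k+3}{3}-(k+1)$.

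Item (1) is deformation theory of the line. Since the common twist is irrelevant, $\Ext^{\bullet}(I_{\ell_0}(-1),I_{\ell_0}(-1))=\Ext^{\bullet}(I_{\ell_0},I_{\ell_0})$; applying $\Hom(I_{\ell_0},-)$ to $0\to I_{\ell_0}\to\CO\to\CO_{\ell_0}\to0$, using $H^{\bullet}(\PB^{3},\CO)=\CN$ concentrated in degree $0$ and $\Ext^{\ge2}(I_{\ell_0},\CO_{\ell_0})=0$, identifies $\Ext^{1}(I_{\ell_0},I_{\ell_0})$ with $H^{0}(N_{\ell_0/\PB^{3}})=H^{0}(\CO_{\PB^{1}}(1)^{\oplus2})=\CN^{4}$ (and $\Ext^{2}=0$). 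For (2) I would apply $\Hom(G,-)$ and $\Hom(-,G)$ to \eqref{G-triangle}, reducing $\Ext^{\bullet}(G,G)$ to the four corner groups $\Ext^{\bullet}(\CO_{V},\CO_{V})=\CN\oplus\CN^{3}[-1]$ (from $N_{V/\PB^{3}}=\CO_{V}(1)$), $\Ext^{\bullet}(\CO_{Z_1},\CO_{Z_1})=\bigwedge^{\bullet}\CN^{3}$, $\Ext^{\bullet}(\CO_{V},\CO_{Z_1})=\CN\oplus\CN[-1]$, and $\Ext^{\bullet}(\CO_{Z_1},\CO_{V})=\CN[-2]\oplus\CN[-3]$ (Grothendieck duality along $i_{V}$, being careful that $\LM i_{V}^{\ast}\CO_{V}=\CO_{V}\oplus\CO_{V}(-1)[1]$). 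Feeding these into the spectral sequence together with $\Hom(G,G)=\CN$ should yield $\Ext^{1}(G,G)=\CN^{5}$, consistent with $\chi(G,G)=-2$ and with $\dim\mathfrak{Fl}_{1}=\dim(\PB^{3})^{\vee}+\dim V=3+2$.

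For (3) and (4) I would apply $\Hom(-,I_{\ell_0}(-1))$, respectively $\Hom(I_{\ell_0}(-1),-)$, to \eqref{G-triangle} (or, equivalently, use the locally free model of $G$). The decisive corner terms are $\Ext^{\bullet}(\CO_{Z_1}[-1],I_{\ell_0}(-1))=\Ext^{\bullet+1}(\CO_{Z_1},I_{\ell_0})$ and, dually, $\Ext^{\bullet+1}(I_{\ell_0},\CO_{Z_1})$: for generic $Z_1\notin\ell_0$ the sheaf $I_{\ell_0}$ is free of rank one at $Z_1$, so $\RHom(I_{\ell_0},\CO_{Z_1})=\mathcal{H}om(I_{\ell_0},\CO_{Z_1})=\CN$ in degree $0$ and hence $\Ext^{\bullet}(\CO_{Z_1},I_{\ell_0})=\CN$ in degree $3$ only. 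In (3) one then gets $\Ext^{0}(G,I_{\ell_0}(-1))=\Ext^{-1}(G,I_{\ell_0}(-1))=0$ (torsion-freeness of $I_{\ell_0}$ and $\Hom(\CO_{Z_1},I_{\ell_0})=0$), and the Euler characteristic $\chi(G,I_{\ell_0}(-1))=-13$ forces $\Ext^{1}(G,I_{\ell_0}(-1))=\CN^{13}$. In (4) the triangle yields
$$0\to\Ext^{1}(I_{\ell_0},\CO_{V}(-3))\to\Ext^{1}(I_{\ell_0}(-1),G)\to\Hom(I_{\ell_0},\CO_{Z_1})\xrightarrow{\ \partial\ }\Ext^{2}(I_{\ell_0},\CO_{V}(-3)),$$
using $\Ext^{1}(I_{\ell_0}(-1),\CO_{Z_1}[-1])=\Hom(I_{\ell_0},\CO_{Z_1})$; one checks $\Ext^{1}(I_{\ell_0},\CO_{V}(-3))=\Ext^{2}(I_{\ell_0},\CO_{V}(-3))=0$ (via $0\to I_{\ell_0}\to\CO\to\CO_{\ell_0}\to0$, the vanishing $H^{0}(\CO_{V}(-3))=H^{1}(\CO_{V}(-3))=0$, and $\RCH(\CO_{\ell_0},\CO_{V})\cong\CO_{\ell_0\cap V}[-2]$ for $\ell_0$ transverse to $V$), so $\partial$ vanishes and $\Ext^{1}(I_{\ell_0}(-1),G)\cong\Hom(I_{\ell_0},\CO_{Z_1})$. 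The case distinction is now purely local at $Z_1$: if $Z_1\notin\ell_0$ then $I_{\ell_0}$ is free there and $\Hom(I_{\ell_0},\CO_{Z_1})=\CN$, while if $Z_1\in\ell_0$ then $I_{\ell_0}$ needs two generators there and $\Hom(I_{\ell_0},\CO_{Z_1})=\CN^{2}$.

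The conceptual content is light; the real work — and the main obstacle — is controlling the connecting maps: one must establish $\Ext^{1}(I_{\ell_0},\CO_{V}(-3))=0$, that the boundary maps are as claimed (surjective in (3), zero in (4)), and that the differentials in the spectral sequence computing $\Ext^{1}(G,G)$ have the expected rank. I expect these to be settled by the combination of Euler-characteristic bookkeeping — which fixes each alternating sum once the $\Hom$-terms and the vanishing of the top $\Ext$'s are known — and, where the alternating sum is not decisive, by replacing $G$ by its explicit three-term locally free model so that each $\RHom$ becomes a concrete complex of finite-dimensional vector spaces whose differentials can be written down, or by invoking the technical lemmas in the appendix. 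The one genuinely geometric point that cannot be bypassed is the local computation at $Z_1$ producing the jump from $\CN$ to $\CN^{2}$ in $\Hom(I_{\ell_0},\CO_{Z_1})$ — precisely the reason the answer in (4) records whether the point lies on the line.
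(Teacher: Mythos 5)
Your overall strategy --- filtering $G:=i_{V_\ast}I_{Z_1/V}^{\vee}(-4)$ by the triangle $\CO_V(-4)\to G\to\CO_{Z_1}[-1]$ (which is exactly Lemma \ref{IZm-duality} of the appendix) and reducing to corner Ext groups --- is a legitimate alternative to what the paper does. The paper instead pushes everything onto the plane by Grothendieck--Verdier duality, computing $\Ext^1(G,I_{\ell_0}(-1))\cong H^{0}\bigl(i_V^{\ast}I_{\ell_0}\otimes I_{Z_1/V}(4)\bigr)$ and $\Ext^1(I_{\ell_0}(-1),G)\cong H^{1}\bigl(i_V^{\ast}I_{\ell_0}\otimes I_{Z_1/V}\bigr)$ in one shot and then running a case analysis on the relative position of $\ell_0$, $V$, $Z_1$; this sidesteps all connecting maps. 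Your treatment of (1), (2) and (4) is sound: in particular the identification $\Ext^1(I_{\ell_0}(-1),G)\cong\Hom(I_{\ell_0},\CO_{Z_1})$ is a clean way to see why the answer in (4) only records whether $Z_1\in\ell_0$, and the required vanishing $\Ext^{1}(I_{\ell_0},\CO_V(-3))=\Ext^{2}(I_{\ell_0},\CO_V(-3))=0$ does hold --- but note you assert it only ``for $\ell_0$ transverse to $V$'', whereas the proposition also covers $\ell_0\subset V$; there you must use $i_V^{\ast}I_{\ell_0}\cong\CO_V(-1)\oplus\CO_{\ell_0}(-1)$ and check the same vanishing, which works.

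The genuine gap is in (3). Applying $\Hom(-,I_{\ell_0}(-1))$ to the triangle gives, at the tail,
\begin{equation*}
\Ext^{1}(\CO_V(-4),I_{\ell_0}(-1))\;\longrightarrow\;\Ext^{3}(\CO_{Z_1},I_{\ell_0}(-1))\;\longrightarrow\;\Ext^{2}(G,I_{\ell_0}(-1))\;\longrightarrow\;0,
\end{equation*}
since $\Ext^{2}(\CO_V(-4),I_{\ell_0}(-1))\cong H^{1}(i_V^{\ast}I_{\ell_0}(4))=0$. Here $\Ext^{1}(\CO_V(-4),I_{\ell_0}(-1))\cong H^{0}(i_V^{\ast}I_{\ell_0}(4))=\CN^{14}$ and $\Ext^{3}(\CO_{Z_1},I_{\ell_0}(-1))\cong\Hom(I_{\ell_0},\CO_{Z_1})^{\ast}$ is $\CN$ or $\CN^{2}$, so $\ext^{1}(G,I_{\ell_0}(-1))=13$ if and only if this boundary map is surjective, equivalently $\Ext^{2}(G,I_{\ell_0}(-1))=0$. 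Your stated tool for closing this --- Euler-characteristic bookkeeping --- cannot do it: $\chi(G,I_{\ell_0}(-1))=-13$ is equally consistent with $(\ext^{1},\ext^{2})=(13,0)$ and $(14,1)$, and the same ambiguity reappears (dually) if you try to compute $\Ext^{2}(G,I_{\ell_0}(-1))\cong\Ext^{1}(I_{\ell_0},G(-3))^{\ast}$ by your method for (4), because the relevant obstruction group $\Ext^{2}(I_{\ell_0},\CO_V(-7))\cong\CN^{14}$ no longer vanishes. You would have to identify the boundary map concretely (it is Yoneda product with the class of the triangle in $\Ext^{2}(\CO_{Z_1},\CO_V(-4))$, i.e.\ evaluation of sections of $i_V^{\ast}I_{\ell_0}(4)$ at $Z_1$) and prove its surjectivity in every configuration, including $Z_1\in\ell_0$ and $\ell_0\subset V$; this is doable but is precisely the content you have deferred. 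The paper's route via $H^{0}(i_V^{\ast}I_{\ell_0}\otimes I_{Z_1/V}(4))$, together with Lemma \ref{k_points_P2} and the tensor-product computations of \cite[Lemma 4.4]{Rez24a}, avoids the issue entirely and is the cleaner way to finish (3).
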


\begin{proof}
The first two Ext groups are clear.
For the third statement, by Grothendieck--Verdier duality (see e.g. \cite[Theorem 3.34]{Huy06}), we have
\begin{eqnarray}\label{M2_wall_equ1}
\Ext^{1}(i_{V_{\ast}}I_{Z_{1}/V}^{\vee}(-4),I_{\ell_{0}}(-1))
&\cong & 
\Ext^{1}(I_{Z_{1}/V}^{\vee}(-4),i_{V}^{\ast}I_{\ell_{0}}[-1])\nonumber\\
&\cong&
H^{0}(i_{V}^{\ast}I_{\ell_{0}}\otimes I_{Z_{1}/V}(4)).
\end{eqnarray}
Then, the computation is divided into two cases:

{\bf Case $1$: $\ell_{0}\not\subset V, p:=\ell_{0}\cap V$}.
By \cite[Lemma 4.1]{Rez24a}, the pullback 
$i_{V}^{\ast}I_{\ell_{0}}\cong I_{p/V}$.
Thus, from \eqref{M2_wall_equ1}, we obtain
\begin{equation}\label{M2_wall_equ1-1}
\Ext^{1}(i_{V_{\ast}}I_{Z_{1}/V}^{\vee}(-4),I_{\ell_{0}}(-1))
\cong 
H^{0}(I_{p/V}\otimes I_{Z_{1}/V}(4)).
\end{equation}
The discussion is as follows.
\begin{enumerate}
\item[(i)] If $p=Z_{1}$, by \cite[Lemma 4.4]{Rez24a}, $I_{p/V}\otimes I_{p/V}$ fits into the short exact sequence
\begin{equation}\label{Ip_otimes_Ip_SES}
\xymatrix@C=0.5cm{
0 \ar[r] & \CO_{p} \ar[r] & I_{p/V}\otimes I_{p/V} \ar[r] & I_{p/V}^{2} \ar[r] & 0.
}
\end{equation}
Based on \eqref{M2_wall_equ1-1}, twisting $\CO_{V}(4)$,  
the exact sequence \eqref{Ip_otimes_Ip_SES} immediately yields
$$
\Ext^{1}(i_{V_{\ast}}I_{Z_{1}/V}^{\vee}(-4),I_{\ell_{0}}(-1))
\cong
H^{0}(\CO_{p})\oplus H^{0}(I^{2}_{p/V}(4))=\CN^{13},
$$
where $H^{0}(I^{2}_{p/V}(4))=\CN^{12}$ follows from Lemma \ref{k_points_P2} (2).
\item[(ii)]
If $p\neq Z_{1}$, 
by \cite[Lemma 4.4]{Rez24a}, we have $I_{p/V}\otimes I_{Z_{1}/V}\cong I_{p\cup Z_{1}/V}$.
Hence, according to \eqref{M2_wall_equ1-1} and Lemma \ref{k_points_P2} (1), we obtain 
$$
\Ext^{1}(i_{V_{\ast}}I_{Z_{1}/V}^{\vee}(-4),I_{\ell_{0}}(-1))
\cong 
H^{0}(I_{p\cup Z_{1}/V}(4))=\CN^{13}.
$$
\end{enumerate}
 
{\bf Case $2$: $\ell_{0}\subset V$}.
By \cite[Lemma 4.1]{Rez24a}, we get $i_{V}^{\ast}I_{\ell_{0}}\cong \CO_{V}(-1)\oplus \CO_{\ell_{0}}(-1)$.
Thus, from \eqref{M2_wall_equ1} and Lemma \ref{k_points_P2} (2), 
we derive
\begin{equation}\label{M2_wall_equ1-2}
\Ext^{1}(i_{V_{\ast}}I_{Z_{1}/V}^{\vee}(-4),I_{\ell_{0}}(-1))
\cong  
\CN^{9}\oplus H^{0}(I_{Z_{1}/V}\otimes \CO_{\ell_{0}}(3)).
\end{equation}
If $Z_{1}\in \ell_{0}$, by \cite[Lemma 4.4]{Rez24a}, 
we have  $I_{Z_{1}/V}\otimes \CO_{\ell_{0}}\cong \CO_{\ell_{0}}(-1)\oplus \CO_{Z_{1}}$.
If $Z_{1}\not\in \ell_{0}$, by \cite[Lemma 4.4]{Rez24a}, 
we have $I_{Z_{1}/V}\otimes \CO_{\ell_{0}} \cong \CO_{\ell_{0}}$.  
In both cases, we obtain $H^{0}(I_{Z_{1}/V}\otimes \CO_{\ell_{0}}(3))= \CN^{4}$. 
It follows from \eqref{M2_wall_equ1-2} that $\Ext^{1}(i_{V_{\ast}}I_{Z_{1}/V}^{\vee}(-4),I_{\ell_{0}}(-1))=\CN^{13}$.

For the last Ext group, by adjoint functors and Serre duality, we get 
\begin{eqnarray}\label{M2_wall_equ1-3}
\Ext^{1}(I_{\ell_{0}}(-1),i_{V_{\ast}}I_{Z_{1}/V}^{\vee}(-4))
&\cong &
\Ext^{1}(i_{V}^{\ast}I_{\ell_{0}}(-1),I_{Z_{1}/V}^{\vee}(-4)) \nonumber\\
&\cong &
H^{1}(i_{V}^{\ast}I_{\ell_{0}}\otimes I_{Z_{1}/V}).
\end{eqnarray}
Likewise, the computation is divided into two cases:

{\bf Case $1$: $\ell_{0}\not \subset V$, $p:=\ell_{0}\cap V$}.
We have $H^{1}(i_{V}^{\ast}I_{\ell_{0}}\otimes I_{Z_{1}/V})
\cong
H^{1}(I_{p/V}\otimes I_{Z_{1}/V})$. 
If $p=Z_{1}$, 
using the exact sequence \eqref{Ip_otimes_Ip_SES}, 
we get $H^{1}(I_{p/V}\otimes I_{Z_{1}/V}) =\CN^{2}$.
If $p\neq Z_{1}$, we deduce $H^{1}(I_{p/V}\otimes I_{Z_{1}/V})\cong H^{1}(I_{p\cup Z_{1}/V})= \CN$.

{\bf Case $2$: $\ell_{0}\subset V$}.
We have $H^{1}(i_{V}^{\ast}I_{\ell_{0}}\otimes I_{Z_{1}/V}) \cong H^{1}(I_{Z_{1}/V}(-1))\oplus H^{1}(I_{Z_{1}/V}\otimes \CO_{\ell_{0}}(-1))$. 
Considering cohomology of the short exact sequence 
$$
\xymatrix@C=0.5cm{
0 \ar[r] & I_{Z_{1}/V}(-1) \ar[r] & \CO_{V}(-1) \ar[r] & \CO_{Z_{1}} \ar[r] & 0,
}
$$
we obtain $H^{1}(I_{Z_{1}/V}(-1))= \CN$.
If $Z_{1}\in \ell_{0}$, 
then we have $H^{1}(I_{Z_{1}/V}\otimes \CO_{\ell_{0}}(-1))\cong H^{1}(\CO_{\ell_{0}}(-2))\oplus H^{1}(\CO_{Z_{1}})= \CN$.
If $Z_{1}\not\in \ell_{0}$, 
then we have $H^{1}(I_{Z_{1}/V}\otimes \CO_{\ell_{0}}(-1))=H^{1}(\CO_{\ell_{0}}(-1))=0$. 
In summary, the fourth statement follows form \eqref{M2_wall_equ1-3}. 
\end{proof}

The third wall sufficiently close to $\Gamma$ from the right side is given by the destabilizing pair $\langle \{\CO(-1)\xrightarrow{s} \CO_{\ell_{0}}\},\CO_{V}(-4)\rangle$.

\begin{prop}\label{M3_wall}
For the destabilizing pair $\langle \{\CO(-1)\xrightarrow{s} \CO_{\ell}\},\CO_{V}(-4)\rangle$, we have:
\begin{enumerate}
\item[(1)] $\Ext^{1}(\CO_{V}(-4),\CO_{V}(-4))=\CN^{3}$;
\item[(2)] $\Ext^{1}(\{\CO(-1)\xrightarrow{s} \CO_{\ell_{0}}\},\{\CO(-1)\xrightarrow{s} \CO_{\ell_{0}}\})=\CN^{5}$;
\item[(3)] 
$\Ext^{1}(\CO_{V}(-4),\{\CO(-1)\xrightarrow{s} \CO_{\ell_{0}}\})
=
\begin{cases}
\CN^{15} & \textrm{ if } \ell_{0} \not\subset V \textrm{ and } Z(s) = \ell_{0}\cap V;\\
\CN^{14} & \textrm{ if } \ell_{0} \not\subset V \textrm{ and } Z(s) \neq \ell_{0}\cap V;\\
\CN^{15} & \textrm{ if } \ell_{0} \subset V;
\end{cases}
$
\item[(4)] 
$\Ext^{1}(\{\CO(-1)\xrightarrow{s} \CO_{\ell_{0}}\}, \CO_{V}(-4))
= 
\begin{cases}
\CN &\textrm{ if } \ell_{0} \not\subset V \textrm{ and } Z(s) = \ell_{0}\cap V;\\
0 & \textrm{ if } \ell_{0} \not\subset V \textrm{ and } Z(s) \neq \ell_{0}\cap V;\\
\CN & \textrm{ if } \ell_{0} \subset V,
\end{cases}$
\end{enumerate}
where the point $Z(s)$ is the zero locus of $s$.
\end{prop}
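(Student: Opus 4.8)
The overall structure parallels Propositions \ref{M1_1_wall} and \ref{M2_wall}. Write $A:=\{\CO(-1)\xrightarrow{s}\CO_{\ell_0}\}$ (placed in cohomological degrees $0$ and $1$) and $p:=Z(s)\in\ell_0$, so that $\CH^{0}(A)\cong I_{\ell_0}(-1)$, $\CH^{1}(A)\cong\CO_{p}$, and $A$ sits in an exact triangle
\begin{equation}\label{M3-wall-A-tri}
\xymatrix@C=0.5cm{
I_{\ell_0}(-1) \ar[r] & A \ar[r] & \CO_{p}[-1] \ar[r] & I_{\ell_0}(-1)[1].
}
\end{equation}
Statement (1) is immediate: applying $\Hom(-,\CO_V(-4))$ to the Koszul resolution $0\to\CO(-5)\to\CO(-4)\to\CO_V(-4)\to 0$ gives a sequence $0\to\CN\to\CN\to\CN^{3}\to\Ext^{1}(\CO_V(-4),\CO_V(-4))\to 0$ in which the middle map is multiplication by the linear form cutting out $V$, hence zero, so $\Ext^{1}=\CN^{3}$. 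For statement (2), I apply $\Hom(-,A)$ to \eqref{M3-wall-A-tri} and then $\Hom(I_{\ell_0}(-1),-)$ and $\Hom(\CO_p[-1],-)$ to \eqref{M3-wall-A-tri} again; this expresses $\RHom(A,A)$ through $\RHom(I_{\ell_0}(-1),I_{\ell_0}(-1))$ (the deformation/obstruction theory of a line in $\PB^{3}$), $\RHom(\CO_p,\CO_p)$, $\RHom(I_{\ell_0}(-1),\CO_p)$ and $\RHom(\CO_p,I_{\ell_0}(-1))$, all of which are standard, and collating the long exact sequences yields $\ext^{1}(A,A)=5$. This matches the fact that $A$ is a PT stable object whose associated pair $(\ell_0,p)$ varies in the smooth $5$-fold which is a $\PB^{1}$-bundle over $\mathrm{Gr}(2,4)$.

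For (3) and (4), the plan is to apply $\Hom(\CO_V(-4),-)$, respectively $\Hom(-,\CO_V(-4))$, to \eqref{M3-wall-A-tri}, reducing everything to three ingredients. First, $\Ext^{\bullet}(\CO_V,\CO_p)$ — and hence $\Ext^{\bullet}(\CO_p,\CO_V)$ by Serre duality — is $\CN$ in degrees $0,1$ if $p\in V$ and vanishes if $p\notin V$, again by the Koszul resolution of $\CO_V$ (the linear form of $V$ acts as $0$ on $\CO_p$ exactly when $p\in V$). Second, $\RHom(I_{\ell_0}(-1),\CO_V(-4))=0$: by the adjunction $Li_{V}^{\ast}\dashv i_{V_{\ast}}$ this equals $\RHom_{V}(Li_{V}^{\ast}I_{\ell_0},\CO_V(-3))$, and the pullback lemma \cite[Lemma 4.1]{Rez24a} — $Li_{V}^{\ast}I_{\ell_0}\cong I_{p/V}$ if $\ell_0\not\subset V$ (with $p=\ell_0\cap V$) and $Li_{V}^{\ast}I_{\ell_0}\cong\CO_V(-1)\oplus\CO_{\ell_0}(-1)$ if $\ell_0\subset V$ — reduces this to cohomology of line bundles on $\PB^{2}$ and $\PB^{1}$ having no cohomology. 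Third, by Grothendieck--Verdier duality for $i_V\colon V\hookrightarrow\PB^{3}$ (using $\omega_{V/\PB^{3}}\cong\CO_V(1)$) one gets $\Ext^{\bullet}(\CO_V(-4),I_{\ell_0}(-1))\cong\Ext^{\bullet-1}_{V}(\CO_V(-4),Li_{V}^{\ast}I_{\ell_0})$, and the same two cases identify the only nonvanishing group as $\Ext^{1}=H^{0}(\PB^{2},I_{p/V}(4))=\CN^{14}$ when $\ell_0\not\subset V$ and $\Ext^{1}=H^{0}(\PB^{2},\CO(3))\oplus H^{0}(\PB^{1},\CO(3))=\CN^{10}\oplus\CN^{4}=\CN^{14}$ when $\ell_0\subset V$; so this group is $\CN^{14}$ in every configuration.

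Feeding these into the long exact sequences of \eqref{M3-wall-A-tri} produces, for (3), a short exact sequence $0\to\CN^{14}\to\Ext^{1}(\CO_V(-4),A)\to\Hom(\CO_V,\CO_p)\to 0$, and for (4) an isomorphism $\Ext^{1}(A,\CO_V(-4))\cong\Ext^{2}(\CO_p,\CO_V)$. The trichotomy in the statement is then exactly the dichotomy $p\in V$ versus $p\notin V$: if $\ell_0\subset V$ then $p\in\ell_0\subset V$, while if $\ell_0\not\subset V$ then $\ell_0\cap V$ is a single point and $p\in V$ holds precisely when $p=\ell_0\cap V$. Thus (3) gives $\CN^{15}$ when $p\in V$ and $\CN^{14}$ otherwise, and (4) gives $\CN$ when $p\in V$ and $0$ otherwise. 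The main technical point — the one place where the geometry, not mere dimension counting, is used — is verifying that the connecting homomorphisms behave as claimed so that the a priori dangerous higher $\Ext$ terms drop out: concretely, that the restriction map $H^{0}(\PB^{2},\CO_V(4))\to H^{0}(\CO_p)$ (and its $\PB^{1}$-analogues) is surjective, and that the induced map $\Ext^{2}(\CO_p,\CO_V(-3))\to\Ext^{2}(\CO_V,\CO_V(-3))$ is an isomorphism, which is what forces $\RHom(I_{\ell_0}(-1),\CO_V(-4))$ to vanish and makes the extension sequence in (3) split cleanly.
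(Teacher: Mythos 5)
Your computations are correct and reach the same answers, but for parts (3) and (4) you take a genuinely different route from the paper. The paper applies Grothendieck--Verdier duality directly to the two-term complex $A=\{\CO(-1)\xrightarrow{s}\CO_{\ell_{0}}\}$, so that everything reduces to computing $i_{V}^{\ast}A$ via \cite[Lemma 4.1]{Rez24a} separately in each of the three geometric configurations, and then taking (hyper)cohomology of the resulting object on $V$. You instead exploit the exact triangle $I_{\ell_{0}}(-1)\to A\to \CO_{p}[-1]$ and long exact sequences, pushing the duality down to the individual pieces: this has the advantage of isolating the configuration-independent part ($\Ext^{1}(\CO_{V}(-4),I_{\ell_{0}}(-1))=\CN^{14}$ and $\RHom(I_{\ell_{0}}(-1),\CO_{V}(-4))=0$ in every case) and reducing the trichotomy of the statement to the single dichotomy $p\in V$ versus $p\notin V$, which is conceptually cleaner; the paper's approach buys a more uniform template shared with Propositions \ref{M2_wall} and \ref{M4_wall}, where the relevant objects are genuinely the pushforwards $i_{V_{\ast}}I_{Z/V}^{\vee}$. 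You correctly identify the two places where your route needs care — the vanishing of $\Ext^{2}(\CO_{V}(-4),I_{\ell_{0}}(-1))$ (so the sequence for (3) is short exact) and the fact that $\Ext^{2}_{V}(\CO_{\ell_{0}\cap V},\CO_{V}(-3))\to\Ext^{2}_{V}(\CO_{V},\CO_{V}(-3))$ is an isomorphism (so $\RHom_{V}(I_{\ell_{0}\cap V/V},\CO_{V}(-3))=0$; note the relevant point here is $\ell_{0}\cap V$, not $Z(s)$) — and both check out. For part (2) you sketch a long-exact-sequence computation rather than carrying it out, but the paper likewise only cites \cite[Lemma 4.5]{Rez24a} for this, so the level of detail is comparable.
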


\begin{proof}
The first Ext group is clear. For the second, see e.g. \cite[Lemma 4.5]{Rez24a}.
For the third statement, by Grothendieck--Verdier duality, we obtain
\begin{equation}\label{M3_wall_equ-1}
\Ext^{1}(\CO_{V}(-4),\{\CO(-1)\xrightarrow{s}\CO_{\ell_{0}}\}) 
\cong 
\Hom(\CO_{V}(-4),i_{V}^{\ast}\{\CO \xrightarrow{s}\CO_{\ell_{0}}(1)\}).
\end{equation}
The computation is divided into three cases:

{\bf Case $1$: $\ell_{0} \not\subset V$ and $Z(s)=\ell_{0}\cap V$}.
By \cite[Lemma 4.1]{Rez24a}, we have $i_{V}^{\ast}\{\CO\xrightarrow{s}\CO_{\ell_{0}}(1)\}
\cong
\CO_{V}\oplus\CO_{\ell_{0}\cap V}[-1]$.
Hence, from \eqref{M3_wall_equ-1}, we obtain 
$$\Ext^{1}(\CO_{V}(-4),\{\CO(-1)\xrightarrow{s}\CO_{\ell_{0}}\})\cong H^{0}(\CO_{V}(4))\oplus H^{0}(\CO_{\ell_{0}\cap V}[-1])=\CN^{15}.$$

{\bf Case $2$:  $\ell_{0} \not\subset V$ and $Z(s)\neq \ell_{0}\cap V$}. 
By \cite[Lemma 4.1]{Rez24a}, we see
$
i_{V}^{\ast}\{\CO\xrightarrow{s}\CO_{\ell_{0}}(1)\}
\cong
I_{\ell_{0}\cap V/V}
$.
Thus, from \eqref{M3_wall_equ-1} and Lemma \ref{k_points_P2} (2),
we get
$$
\Ext^{1}(\CO_{V}(-4),\{\CO(-1)\xrightarrow{s}\CO_{\ell_{0}}\})
\cong
H^{0}(I_{\ell_{0}\cap V/V}(4))=\CN^{14}.
$$

{\bf Case $3$: $\ell_{0}\subset V$}. 
By \cite[Lemma 4.1]{Rez24a}, we know
$
i_{V}^{\ast}\{\CO\xrightarrow{s}\CO_{\ell_{0}}(1)\}
\cong
\{\CO_{V} \xrightarrow{s} \CO_{\ell_{0}}(1)\}\oplus \CO_{\ell_{0}}.
$
It follows from \eqref{M3_wall_equ-1} that
\begin{equation}\label{M3_wall_equ-1-1}
\Ext^{1}(\CO_{V}(-4),\{\CO(-1)\xrightarrow{s}\CO_{\ell_{0}}\})
\cong 
H^{0}(\{\CO_{V}(4)\xrightarrow{s} \CO_{\ell_{0}}(5)\})\oplus \CN^{5}.
\end{equation}
Note that $\{\CO_{V}\xrightarrow{s} \CO_{\ell_{0}}(1)\}$ fits into the exact triangle
\begin{equation}\label{M3-wall-equ-1-2}
\xymatrix@C=0.5cm{
I_{\ell_{0}/V} \ar[r] & \{\CO_{V}\xrightarrow{s} \CO_{\ell_{0}}(1)\} \ar[r] & \CO_{Z(s)}[-1] \ar[r] & I_{\ell_{0}/V}[1].
}
\end{equation}
Since $H^{-1}(\CO_{Z(s)}[-1])=0=H^{0}(\CO_{Z(s)}[-1])$, 
taking cohomology of \eqref{M3-wall-equ-1-2} with twisting $\CO_{V}(4)$, we obtain
$$
H^{0}(\{\CO_{V}(4)\xrightarrow{s} \CO_{\ell_{0}}(5)\})\cong H^{0}(I_{\ell_{0}/V}(4))\cong H^{0}(\CO_{V}(3))=\CN^{10}.
$$
Thus, by \eqref{M3_wall_equ-1-1}, we conclude $\Ext^{1}(\CO_{V}(-4),\{\CO(-1)\xrightarrow{s}\CO_{\ell_{0}}\})=\CN^{15}$.

For the last Ext group, by adjoint functors and Serre duality, we get
\begin{eqnarray}\label{M3-wall-equ-2}
\Ext^{1}(\{\CO(-1)\xrightarrow{s} \CO_{\ell_{0}}\},\CO_{V}(-4))
&\cong &
\Ext^{1}(i_{V}^{\ast}\{\CO(-1)\xrightarrow{s} \CO_{\ell_{0}}\},\CO_{V}(-4))\nonumber\\
&\cong &
H^{1}(i_{V}^{\ast}\{\CO\xrightarrow{s} \CO_{\ell_{0}}(1)\}).  
\end{eqnarray}
Similarly,  
it is also divided into three cases:

{\bf Case $1$: $\ell_{0} \not\subset V$ and $Z(s)=\ell_{0}\cap V$}.
By \eqref{M3-wall-equ-2}, we get 
\begin{eqnarray*}
\Ext^{1}(\{\CO(-1)\xrightarrow{s} \CO_{\ell_{0}}\},\CO_{V}(-4))
&\cong&
H^{1}(\CO_{V})\oplus H^{1}(\CO_{\ell_{0}\cap V}[-1]) \\
&\cong& H^{0}(\CO_{\ell_{0}\cap V})=\CN.  
\end{eqnarray*}

{\bf Case $2$: $\ell_{0} \not\subset V$ and $Z(s)\neq \ell_{0} \cap V$}.
Let $\ell^{\prime}\in V$ be a line containing $\ell_{0}\cap V$.
Since $H^{1}(\CO_{V}(-1))=0=H^{1}(\CO_{\ell^{\prime}}(-1))$, consider the sheaf cohomology of 
$
\xymatrix@C=0.4cm{
0 \ar[r] & \CO_{V}(-1) \ar[r] & I_{\ell_{0}\cap V/V} \ar[r] & \CO_{\ell^{\prime}}(-1) \ar[r] & 0
}
$,
we derive $H^{1}(I_{\ell_{0}\cap V/V})=0$.
It follows from \eqref{M3-wall-equ-2} that 
$
\Ext^{1}(\{\CO(-1)\xrightarrow{s} \CO_{\ell_{0}}\},\CO_{V}(-4))\cong H^{1}(I_{\ell_{0}\cap V/V})=0.
$

{\bf Case $3$: $\ell_{0}\subset V$}.
Since $H^{1}(I_{\ell_{0}/V})=0=H^{2}(I_{\ell_{0}/V})$ and $H^{1}(\CO_{Z(s)}[-1])=\CN$,
considering the sheaf cohomology of \eqref{M3-wall-equ-1-2}, we derive
$H^{1}(\{\CO_{V}\xrightarrow{s}\CO_{\ell_{0}}(1)\})\cong H^{1}(\CO_{Z(s)}[-1])=\CN$. Therefore, 
from \eqref{M3-wall-equ-2}, we get
$$\Ext^{1}(\{\CO(-1)\xrightarrow{s} \CO_{\ell_{0}}\},\CO_{V}(-4))
\cong H^{1}(\{\CO_{V}\xrightarrow{s}\CO_{\ell_{0}}(1)\})\oplus H^{1}(\CO_{\ell_{0}})
=\CN.$$
This completes the proof of the proposition.
\end{proof}

The fourth wall sufficiently close to $\Gamma$ from the right side is given by the destabilizing pair $\langle \CO(-1),i_{V_{\ast}}I_{Z_{4}/V}^{\vee}(-5)\rangle$.

\begin{prop}\label{M4_wall}
For the destabilizing pair $\langle \CO(-1),i_{V_{\ast}}I_{Z_{4}/V}^{\vee}(-5)\rangle$, we have:
\begin{enumerate}
\item[(1)] $\Ext^{1}(\CO(-1),\CO(-1))=0$;
\item[(2)] $\Ext^{1}(i_{V_{\ast}} I_{Z_{4}/V}^{\vee}(-5),i_{V_{\ast}} I_{Z_{4}/V}^{\vee}(-5))=\CN^{11}$;
\item[(3)] $\Ext^{1}(i_{V_{\ast}}I_{Z_{4}/V}^{\vee}(-5),\CO(-1)) = \CN^{17}$; and
\item[(4)] $\Ext^{1}(\CO(-1),i_{V_{\ast}}I_{Z_{4}/V}^{\vee}(-5))
=\begin{cases}
\CN^{2} & \textrm{ if } Z_{4} \textrm{ lies in a line},\\
\CN & \textrm{otherwise.}
\end{cases}
$
\end{enumerate}
\end{prop}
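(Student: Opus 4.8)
The plan is to follow the template of Propositions~\ref{M2_wall} and~\ref{M3_wall}: reduce every $\Ext$-group to a cohomology computation on the plane $V\cong\PB^{2}$ via adjunction and Grothendieck--Verdier duality, and then evaluate it using the cohomology of twists of the ideal sheaf $I_{Z_{4}/V}$ of a length-$4$ subscheme of $\PB^{2}$, invoking Lemma~\ref{k_points_P2}. Three ingredients recur: the normal bundle of $V$ in $\PB^{3}$ is $\CO_{V}(1)$, so that $i_{V}^{!}(-)\cong i_{V}^{\ast}(-)\otimes\CO_{V}(1)[-1]$; the derived dual on $V$ is an involution, $(I_{Z_{4}/V}^{\vee})^{\vee}\cong I_{Z_{4}/V}$; and applying $\RCH_{V}(-,\CO_{V})$ to $0\to I_{Z_{4}/V}\to\CO_{V}\to\CO_{Z_{4}}\to 0$ produces a distinguished triangle $\CO_{V}\to I_{Z_{4}/V}^{\vee}\to\CO_{Z_{4}}[-1]\to\CO_{V}[1]$, exhibiting $I_{Z_{4}/V}^{\vee}$ as an extension of $\CO_{Z_{4}}[-1]$ by $\CO_{V}$.

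Statement (1) is immediate, since $\Ext^{1}(\CO(-1),\CO(-1))=H^{1}(\CO_{\PB^{3}})=0$. For (2) I would combine the decomposition $i_{V}^{\ast}i_{V_{\ast}}F\cong F\oplus F(-1)[1]$ for the divisor $V$ (equivalently, the associated long exact sequence) with the adjunction $i_{V}^{\ast}\dashv i_{V_{\ast}}$ to reduce, after dualizing both slots on $V$, to
\[
\Ext^{1}_{V}(I_{Z_{4}/V},I_{Z_{4}/V})\oplus\Hom_{V}(I_{Z_{4}/V}(-1),I_{Z_{4}/V}).
\]
The first summand is the $8$-dimensional tangent space to the Hilbert scheme of $4$ points on $\PB^{2}$ at $Z_{4}$, and the second equals $H^{0}(\CO_{\PB^{2}}(1))=\CN^{3}$ because $\mathcal{H}om_{V}(I_{Z_{4}/V},I_{Z_{4}/V})\cong\CO_{V}$; here one also uses the vanishing of the obstruction $\Ext^{2}_{V}(I_{Z_{4}/V},I_{Z_{4}/V})$, which follows from Serre duality. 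This gives $\CN^{11}$.

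For (3) and (4) I would run the duality reduction essentially verbatim as in Proposition~\ref{M2_wall}. Grothendieck--Verdier duality gives $\Ext^{1}(i_{V_{\ast}}I_{Z_{4}/V}^{\vee}(-5),\CO(-1))\cong\Ext^{1}_{V}(I_{Z_{4}/V}^{\vee}(-5),\CO_{V}[-1])\cong\Hom_{V}(I_{Z_{4}/V}^{\vee}(-5),\CO_{V})\cong H^{0}(I_{Z_{4}/V}(5))$, which has dimension $21-4=17$ since a length-$4$ subscheme always imposes independent conditions on quintics. For (4), the adjunction $i_{V}^{\ast}\dashv i_{V_{\ast}}$ gives $\Ext^{1}(\CO(-1),i_{V_{\ast}}I_{Z_{4}/V}^{\vee}(-5))\cong\Ext^{1}_{V}(\CO_{V},I_{Z_{4}/V}^{\vee}(-4))$, and Serre duality on $\PB^{2}$ together with $(I_{Z_{4}/V}^{\vee}(-4))^{\vee}\otimes\omega_{V}\cong I_{Z_{4}/V}(1)$ turns this into $H^{1}(I_{Z_{4}/V}(1))^{\vee}$. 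The structure sequence $0\to I_{Z_{4}/V}(1)\to\CO_{V}(1)\to\CO_{Z_{4}}\to 0$ then identifies $H^{1}(I_{Z_{4}/V}(1))$ with the cokernel of the evaluation map $H^{0}(\CO_{V}(1))\to H^{0}(\CO_{Z_{4}})\cong\CN^{4}$.

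The bulk of this is mechanical duality bookkeeping plus the cohomology of points on $\PB^{2}$, both handled by Lemma~\ref{k_points_P2}, and I expect no real difficulty there. The one step that demands genuine care is the dichotomy in (4): one must show that the evaluation map $H^{0}(\CO_{V}(1))\to H^{0}(\CO_{Z_{4}})$ has rank $3$ (cokernel $\CN$) exactly when no linear form vanishes on $Z_{4}$, i.e. when $Z_{4}$ is not contained in a line, and that it drops to rank $2$ (cokernel $\CN^{2}$) precisely when $Z_{4}$ lies on a necessarily unique line, the rank never falling below $2$ since $Z_{4}$ cannot lie on two distinct lines. For a non-reduced $Z_{4}$ — a curvilinear length-$4$ scheme, or one supported at a single point — ``lies on a line'' must be read as the existence of a linear form in the defining ideal, and the rank count checked accordingly. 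This incidence analysis of the length-$4$ subscheme, rather than any of the numerics, is the crux.
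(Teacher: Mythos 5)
Your proposal is correct and follows essentially the same route as the paper: Grothendieck--Verdier duality and adjunction reduce (3) and (4) to $H^{0}(I_{Z_{4}/V}(5))$ and $H^{1}(I_{Z_{4}/V}(1))$ respectively, and the dichotomy in (4) is exactly the case distinction on whether $Z_{4}$ lies on a line (the paper handles the collinear case via the sequence $0\to\CO_{V}(-1)\to I_{Z_{4}/V}\to\CO_{\ell}(-4)\to 0$ rather than your rank count on the evaluation map, but these are equivalent). Your expansion of (2), which the paper dismisses as clear, is also correct — just note that $i_{V}^{\ast}i_{V_{\ast}}F$ need not split as you state; only the long exact sequence is needed, and the relevant connecting maps vanish for degree reasons.
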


\begin{proof}
The first two Ext groups are clear.
For the third statement, by Grothendieck--Verdier duality and Lemma \ref{k_points_P2} (1), we obtain
\begin{eqnarray*}
\Ext^{1}(i_{V_{\ast}}I_{Z_{4}/V}^{\vee}(-5),\CO(-1)) &\cong& \Ext^{1}(I_{Z_{4}/V}^{\vee}(-5),i_{V}^{\ast}\CO[-1])\\
& \cong & H^{0}(I_{Z_{4}/V}(5))=\CN^{17}.  
\end{eqnarray*}

For the last statement, by adjoint functors and Serre duality on $V$, we deduce
\begin{equation}\label{M4-wall-equ-2}
\Ext^{1}(\CO(-1),i_{V_{\ast}}I_{Z_{4}/V}^{\vee}(-5))\cong \Ext^{1}(\CO_{V}(-1),I_{Z_{4}/V}^{\vee}(-5))\cong H^{1}(I_{Z_{4}/V}(1)).
\end{equation}
Then, the computation is divided into two cases:

{\bf Case $1$: $Z_{4}$ lies in a line $\ell$.}
Then, there is a short exact sequence
\begin{equation}\label{Z4-sub-V-ex-seq}
\xymatrix@C=0.5cm{
0 \ar[r] & \CO_{V}(-1) \ar[r] & I_{Z_{4}/V} \ar[r] & \CO_{\ell}(-4) \ar[r] & 0.
}   
\end{equation}
Since $H^{1}(\CO_{\ell}(-3))\cong H^{0}(\CO_{\ell}(1))=\CN^{2}$,
considering the sheaf cohomology of the exact sequence \eqref{Z4-sub-V-ex-seq} with twisting $\CO_{V}(1)$, we obtain $H^{1}(I_{Z_{4}/V}(1))\cong H^{1}(\CO_{\ell}(-3)) = \CN^{2}$.
From \eqref{M4-wall-equ-2}, we deduce
$\Ext^{1}(\CO(-1),i_{V_{\ast}}I_{Z_{4}/V}^{\vee}(-5))=\CN^{2}$.

{\bf Case $2$: $Z_{4}$ does not lie in a line.}
In this case, we have $h^{0}(I_{Z_{4}/V}(1))=0$.
Since $h^{0}(\CO_{Z_{4}})=4$ and $h^{0}(\CO_{V}(1))=3$, taking cohomology of the structure sheaf sequence
$\xymatrix@C=0.4cm{
0 \ar[r] & I_{Z_{4}/V} \ar[r] & \CO_{V} \ar[r] & \CO_{Z_{4}} \ar[r] & 0
}$
with twisting $\CO_{V}(1)$,
we get 
$$
h^{1}(I_{Z_{4}/V}(1))=h^{0}(I_{Z_{4}/V}(1))+1=1.
$$
By \eqref{M4-wall-equ-2}, we conclude $\Ext^{1}(\CO(-1),i_{V_{\ast}}I_{Z_{4}}^{\vee}(-5))=\CN$. 
\end{proof}

%========================================================================== 

\section{Proof of Theorem \ref{mainthm}}\label{proofs-main}

This section is devoted to the proof of Theorem \ref{mainthm}.
To the best of our knowledge, the complex manifold structure of the stability manifold of $\DC(\PB^{3})$ is still unknown.
In order to study the global geometry of the moduli space of stable pairs associated with quintic genus $2$ curves via wall-crossing in Bridgeland stability, we need to choose a special path that crosses well-structured chambers and connects the large volume limit chamber in the stability manifold.
To this end, we choose a continuous path $\gamma$ as in Figure \ref{main-wall-crossing-picture} crossing from the first wall to the last and being sufficiently close to the left branch $\Gamma$ of the hyperbola $\beta^{2}-\alpha^{2}=10$.
\begin{figure}[H]
\centering     
\begin{tikzpicture}[x=0.5pt,y=0.5pt,yscale=-0.8,xscale=0.8]
\draw    (35,338.5) -- (600,343.5) ; 
\draw [color={rgb, 255:red, 217; green, 143; blue, 22 }  ,draw opacity=1 ][line width=1.5]  [dash pattern={on 5.63pt off 4.5pt}]  (49,83.5) .. controls (75.91,118.29) and (149.96,336.36) .. (140,342.5) ;
\draw [color={rgb, 255:red, 208; green, 2; blue, 27 }  ,draw opacity=1 ][line width=1.5]    (128.39,284.03) .. controls (210,262.5) and (445,310.5) .. (481,329.5) ;
\draw [color={rgb, 255:red, 208; green, 2; blue, 27 }  ,draw opacity=1 ][line width=1.5]    (111.54,223.96) .. controls (191,221.5) and (447,281.5) .. (479,320.5) ;
\draw [color={rgb, 255:red, 208; green, 2; blue, 27 }  ,draw opacity=1 ][line width=1.5]    (111.54,223.96) .. controls (178,172.5) and (428,203.5) .. (477,250.5) ; 
\draw [color={rgb, 255:red, 208; green, 2; blue, 27 }  ,draw opacity=1 ][line width=1.5]    (83,148.5) .. controls (162.97,88.13) and (434.18,150.47) .. (478,181.5) ;
\draw [color={rgb, 255:red, 101; green, 184; blue, 6 }  ,draw opacity=1 ]   (154,289.78) .. controls (152.49,271.29) and (104.5,132.65) .. (76.83,93.65) ;
\draw [shift={(76,92.5)}, rotate = 53.6] [color={rgb, 255:red, 101; green, 184; blue, 6 }  ,draw opacity=1 ][line width=0.75]    (10.93,-3.29) .. controls (6.95,-1.4) and (3.31,-0.3) .. (0,0) .. controls (3.31,0.3) and (6.95,1.4) .. (10.93,3.29)   ;
\draw [color={rgb, 255:red, 0; green, 0; blue, 0 }  ,draw opacity=1 ]   (130,294.56) .. controls (175,288.17) and (227,324.93) .. (225,341.7) ;
\draw (174,246) node [anchor=north west][inner sep=0.75pt]  [font=\small,rotate=-358.73]  {$\displaystyle \MM_{1}$};
\draw (174,200) node [anchor=north west][inner sep=0.75pt]  [font=\small,rotate=-1.52]  {$\MM_{2} =\widetilde{\MM_{1}} \cup \MM_{2}^{\prime }$};
\draw (174,155) node [anchor=north west][inner sep=0.75pt]  [font=\small,rotate=-1.98]  {$\MM_{3} =\widetilde{\widetilde{\MM_{1}}} \cup \MM_{2}^{\prime } \cup \MM_{3}^{\prime } \cup \MM_{3}^{\prime\prime } \cup \MM_{3}^{\prime\prime\prime}$};
\draw (174,75) node [anchor=north west][inner sep=0.75pt]  [font=\small,rotate=-1.39]  {$\MM_{4} =\widetilde{\widetilde{\widetilde{\MM_{1}}}} \cup \widetilde{\MM_{2}^{\prime }} \cup \MM_{3}^{\prime } \cup \MM_{3}^{\prime\prime } \cup \widetilde{\MM_{3}^{\prime\prime\prime}} \cup \MM_{4}^{\prime }$};
\draw (174,283.52) node [anchor=north west][inner sep=0.75pt]  [font=\small]  {$\MM_{0}=\emptyset $};
\draw (539,37.49) node [anchor=north west][inner sep=0.75pt]   [align=left] {$ $};
\draw (110,114.19) node [anchor=north west][inner sep=0.75pt]   [align=left] {$ $};
\draw (165,320.04) node [anchor=north west][inner sep=0.75pt]   [align=left] {{\tiny BMT semicircle}};
\draw (110,170) node [anchor=north west][inner sep=0.75pt]  [font=\Large] [align=left] {$\displaystyle \gamma $};
\draw (84,226.03) node [anchor=north west][inner sep=0.75pt]  [font=\LARGE] [align=left] {$\displaystyle \Gamma $};
\end{tikzpicture}
\caption{Wall-crossing along a path sufficiently close to $\Gamma$}
\label{main-wall-crossing-picture}
\end{figure}
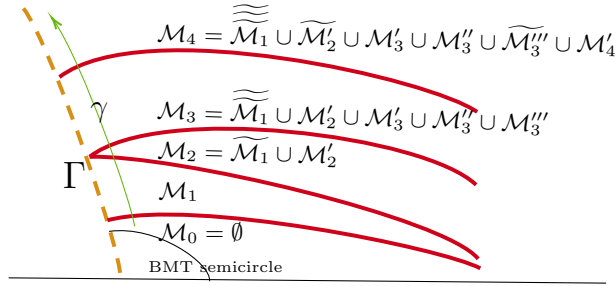

To begin with, let us consider the Bridgeland moduli space for the first chamber in Figure \ref{main-wall-crossing-picture}. 

\begin{prop}\label{moduli-chamber-1}
The Bridgeland  moduli space $\MM_{0}$ for the first chamber below the first wall, which is given by the destabilizing pair $\langle \CO(-2),B \rangle$ (or $\langle I_{C_{2}}(-1), \CO_{V}(-3) \rangle$), is empty.
Here, $B$ is a Gieseker semistable sheaf fitting into a short exact sequence
\eqref{B_SES}, $V$ is a plane and $C_{2}$ is a conic.
\end{prop}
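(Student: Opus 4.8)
The statement asserts that the Bridgeland moduli space $\MM_{0}$ associated with the innermost chamber (below the smallest wall on the path $\gamma$) is empty. The plan is to argue that no object $E\in\DC(\PB^3)$ with $\ch(E)=v=(1,0,-5,11)$ can be $\lambda_{\alpha,\beta,s}$-semistable for $(\alpha,\beta)$ in this chamber, by combining the BMT inequality of Theorem \ref{BMT-inequality} with the explicit location of the walls. Recall from the proof of Proposition \ref{main-numerical-walls} that the BMT inequality for $v$ reads $Q_{\alpha,\beta}(E)=5(\alpha^2+\beta^2)+33\beta+50\ge 0$, which cuts out a semicircle; completing the square, this is $\alpha^2+\bigl(\beta+\tfrac{33}{10}\bigr)^2\ge\tfrac{33^2}{100}-10=\tfrac{1089-1000}{100}=\tfrac{89}{100}$, i.e.\ the region \emph{outside} (or on) the semicircle $W\bigl(-\tfrac{33}{10},\tfrac{\sqrt{89}}{10}\bigr)$. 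Thus any tilt-semistable object of class $v$ must live outside this "BMT semicircle."

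The key geometric step is to compare this BMT semicircle with the smallest actual wall on $\gamma$, namely $W(-\tfrac72,\tfrac32):(\beta+\tfrac72)^2+\alpha^2=(\tfrac32)^2$ from Theorem \ref{main-actual-wall}. I would check that the BMT semicircle $W(-\tfrac{33}{10},\tfrac{\sqrt{89}}{10})$ lies entirely \emph{above} (i.e.\ strictly inside) the wall $W(-\tfrac72,\tfrac32)$ — equivalently, that the first chamber on $\gamma$, which is the region below $W(-\tfrac72,\tfrac32)$ and to the right of $\Gamma$, is contained in the interior of the BMT semicircle where $Q_{\alpha,\beta}(v)<0$. Concretely: both semicircles are centered on the $\beta$-axis; $W(-\tfrac72,\tfrac32)$ meets the $\beta$-axis at $\beta=-5$ and $\beta=-2$, while the BMT semicircle meets it at $\beta=-\tfrac{33\pm\sqrt{89}}{10}$, i.e.\ approximately $\beta\in(-4.24,-2.36)$, which is contained in $(-5,-2)$; since nested semicircles on the $\beta$-axis are linearly ordered by their intervals of intersection, the BMT semicircle is strictly inside $W(-\tfrac72,\tfrac32)$. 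Hence the chamber below $W(-\tfrac72,\tfrac32)$ lies inside the BMT semicircle, where $Q_{\alpha,\beta}(v)<0$, so by Theorem \ref{BMT-inequality} there is no $\sigma_{\alpha,\beta}$-semistable object of class $v$ there; by Lemma \ref{tilt-stab-Bri-stab} (and Corollary \ref{tilt-stab-Bri-stab-cor}), together with the fact that $\CH^{-1}_\beta(E)$ or $\CH^0_\beta(E)$ would itself need to be $\sigma_{\alpha,\beta}$-semistable with the same truncated Chern character, there can likewise be no $\lambda_{\alpha,\beta,s}$-semistable object of class $v$ for $s\gg1$. Therefore $\MM_{0}=\emptyset$.

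A minor subtlety I would address carefully: Lemma \ref{tilt-stab-Bri-stab} allows the possibility that a $\lambda_{\alpha,\beta,s}$-semistable $E$ has $\CH^{0}_\beta(E)$ supported in dimension $0$ and $\CH^{-1}_\beta(E)[1]$ the relevant piece. One must note that in either alternative of that lemma the object governing the Chern character up to degree $2$ — either $E\cong\CH^0_\beta(E)$ itself, or $\CH^{-1}_\beta(E)$ — is $\sigma_{\alpha,\beta}$-semistable, and since the $0$-dimensional part does not affect $\ch_{\le 2}$, it satisfies $Q_{\alpha,\beta}\ge 0$ with $\ch_{\le2}$ equal to that of $v$ (up to sign conventions for the shift), forcing $(\alpha,\beta)$ to lie outside the BMT semicircle and hence outside the first chamber. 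This is exactly the same reasoning used implicitly in the sentence "Since the BMT inequality gives a semicircle in the first chamber, the moduli space $\mathcal{M}_{0}$ is empty" in the introduction, so the argument is routine once the nesting of the two semicircles is verified; the only real "obstacle" is just the bookkeeping to confirm that the BMT semicircle's $\beta$-intercepts $-\tfrac{33\pm\sqrt{89}}{10}$ indeed sit inside $(-5,-2)$, which is immediate from $\sqrt{89}<9$ (so $-\tfrac{33+\sqrt{89}}{10}>-\tfrac{42}{10}=-4.2>-5$) and $\sqrt{89}>0$ (so $-\tfrac{33-\sqrt{89}}{10}<-\tfrac{33-9}{10}=-2.4<-2$). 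This completes the plan.
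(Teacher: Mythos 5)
Your proposal follows the same route as the paper: compute the BMT semicircle $W(-\tfrac{33}{10},\tfrac{\sqrt{89}}{10})$ for $v=(1,0,-5,11)$, compare it with the first wall $W(-\tfrac72,\tfrac32)$, and conclude emptiness via Theorem \ref{BMT-inequality} and Lemma \ref{tilt-stab-Bri-stab}. The key computation — that the BMT semicircle is nested strictly inside the wall, since its $\beta$-intercepts $-\tfrac{33\pm\sqrt{89}}{10}\approx -4.24,\,-2.36$ lie in $(-5,-2)$ — is correct and is exactly what the paper's proof records. The problem is the next step: you declare this nesting to be ``equivalent'' to the first chamber being contained in the interior of the BMT semicircle, and you run the rest of the argument off that containment. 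That containment is false, and is in fact the reverse of what you just verified: because the BMT semicircle sits strictly inside $W(-\tfrac72,\tfrac32)$, it is the (open) BMT disc that is contained in the first chamber, while the annular region between the BMT semicircle and the wall lies in the chamber but \emph{outside} the BMT disc, where Theorem \ref{BMT-inequality} forbids nothing. As written, your argument therefore only rules out semistable objects on part of the chamber. The missing (one-line but necessary) ingredient is the constancy of the moduli space within a chamber: the BMT inequality empties the moduli space at the points of the chamber inside the BMT semicircle, and since no actual wall separates those points from the rest of the chamber (the smallest wall near $\Gamma$ is $W(-\tfrac72,\tfrac32)$ by Proposition \ref{main-numerical-walls}), $\MM_0$ is empty throughout. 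This is precisely how the paper closes the argument (``Since the moduli spaces are isomorphic in the same chamber\dots''); with your claimed containment that appeal would be superfluous, but with the true containment it is what carries the conclusion.

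Two smaller points. First, $\sqrt{89}>9$, not $<9$, so your ``immediate'' justifications of the interval containment at the end are off (the correct bounds come from $\sqrt{89}<13<17$); your decimal values earlier are right, so nothing breaks. Second, in handling case (ii) of Lemma \ref{tilt-stab-Bri-stab} you argue only via $\ch_{\le 2}$, but $Q_{\alpha,\beta}$ also involves $\ch_3$, and $\ch_3(\CH^{-1}_{\beta}(E[1]))=11+t$ with $t\ge 0$ the length of the $0$-dimensional piece. One should note that this changes $Q_{\alpha,\beta}$ by $3t\beta\le 0$ (as $\beta<0$), so the violation of the BMT inequality persists; the conclusion is fine, but the justification as stated is incomplete.
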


\begin{proof}
The semicircle $(\beta+\frac{7}{2})^{2}+\alpha^2=(\frac{3}{2})^{2}$ defined by the pair $\langle \CO(-2),B\rangle$ is above the semicircle $(\beta+\frac{33}{10})^{2}+\alpha^{2}=(\frac{\sqrt{89}}{10})^{2}$ which is given by the BMT inequality in Theorem \ref{BMT-inequality}. Note that the moduli space is empty below this BMT semicircle. 
Since the moduli spaces are isomorphic in the same chamber, the moduli space $\MM_{0}$ is empty.
\end{proof}

Next, we study the Bridgeland moduli space $\mathcal{M}_{1}$ for the second chamber after the first wall as in Figure \ref{main-wall-crossing-picture}.
As mentioned above, the first wall is given by two different destabilizing pairs.
A prior, their relationship is unknown.
Hence, we need to consider the corresponding moduli spaces separately.
Suppose that $\MM_{1}^{\prime}\subset \MM_{1}$ is the moduli space parameterizing the extensions
\begin{equation}\label{1st-wall-singular-SES}
\xymatrix@C=0.5cm{
0 \ar[r] & I_{C_{2}}(-1) \ar[r] & E^{\prime} \ar[r] & \CO_{V}(-3) \ar[r] & 0,
}
\end{equation}
where $V$ is a plane and $C_{2}$ is a conic.
By Lemma \ref{M1_1_wall}, 
$\mathcal{M}_{1}^{\prime}$ is a $18$-dimensional $\mathbb{P}^{7}$-bundle over $(\PB^{3})^{\vee}\times \mathrm{Hilb}^{2t+1}(\PB^{3})$.
According to Lemma \ref{transverselem}, a generic element in $\MM_{1}^{\prime}$  is the ideal sheaf of the union of a plane cubic and a conic not in the plane. 
Let $\mathcal{K}_{4}(2,2)$ be the King moduli space of quiver representations of the Kronecker quiver $K_{4}$ with the dimension vector $(2,2)$. 
This moduli space $\mathcal{K}_{4}(2,2)$ is a normal irreducible projective variety of dimension $9$; moreover,
by \cite[Corollary 4]{BGLM24}, it is a projective variety of Fano type and thus a Mori dream space.

\begin{prop}\label{Modulisp1}
The moduli space $\mathcal{M}_{1}$ is a $20$-dimensional $\mathbb{P}^{11}$-bundle over  $\mathcal{K}_{4}(2,2)$, which 
generically parameterizes quintic genus $2$ curves. 
Moreover, the singular locus of $\MM_{1}$ is $\mathcal{M}_{1}^{\prime}$, which generically parameterizes the union of a plane cubic and a conic not in the plane. 
\end{prop}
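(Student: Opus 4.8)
The plan is to identify $\MM_{1}$ by analyzing the other destabilizing pair for the first wall, namely $\langle \CO(-2),B\rangle$, and then to glue the two descriptions. First I would consider the moduli space $\MM_{1}^{\prime\prime}\subset\MM_{1}$ parameterizing extensions
\begin{equation*}
\xymatrix@C=0.5cm{
0 \ar[r] & \CO(-2) \ar[r] & E \ar[r] & B \ar[r] & 0,
}
\end{equation*}
where $B$ is a Gieseker semistable sheaf fitting into \eqref{B_SES}. By Proposition \ref{M1_2_wall} we have $\Ext^{1}(B,\CO(-2))=\CN^{12}$ and $\Ext^{1}(\CO(-2),B)=0$, so on the side of the wall where these objects become stable, $E$ is a nontrivial extension and the projectivized extension space has dimension $11$; moreover $\Ext^{1}(B,B)=\CN^{9}$ when $B$ is Gieseker stable. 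Thus the locus of stable $B$'s is parameterized (at least birationally) by $\mathcal{K}_{4}(2,2)$: indeed sheaves $B$ with $\ch(B)=(0,2,-7,\tfrac{37}{3})$ arising from \eqref{B_SES} are, after twisting, precisely the cokernels of a generic map $\CO(-3)^{\oplus 2}\hookrightarrow\CO(-4)^{\oplus 2}$-type presentation — equivalently, Gieseker-stable sheaves on $\PB^{3}$ supported on a plane quartic with a degree-$0$ twist — and the Beilinson-type resolution identifies their moduli with representations of the Kronecker quiver $K_{4}$ with dimension vector $(2,2)$; the King moduli space $\mathcal{K}_{4}(2,2)$ is the normal irreducible projective variety of dimension $9$ that corepresents this, and by \cite[Corollary 4]{BGLM24} it is of Fano type. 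Assembling these, $\MM_{1}^{\prime\prime}$ is a $\PB^{11}$-bundle over $\mathcal{K}_{4}(2,2)$, hence irreducible of dimension $20$.

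Next I would show $\MM_{1}^{\prime\prime}$ is in fact all of $\MM_{1}$ up to the strictly-semistable locus, and that its generic point is the ideal sheaf of a smooth quintic genus $2$ curve. For the first point: any $E\in\MM_{1}$ is $\lambda_{\alpha,\beta,s}$-semistable in the second chamber, and by Lemma \ref{tilt-stab-Bri-stab} together with Corollary \ref{tilt-stab-Bri-stab-cor} its tilt cohomology is controlled; in the second chamber $E$ is actually the ideal sheaf $I_{C}$ of a curve $C$ (for $\alpha\gg0$ along $\gamma$, by Proposition \ref{limit-tilt-stab-Gie-stab-prop}), and the only way such an $E$ can fail to be a $\lambda$-stable PT object of the expected type on the first wall is to sit in one of the two destabilizing sequences of Theorem \ref{main-actual-wall}. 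So $\MM_{1}=\MM_{1}^{\prime\prime}\cup\MM_{1}^{\prime}$ set-theoretically, and a dimension count — $\dim\MM_{1}^{\prime}=18<20=\dim\MM_{1}^{\prime\prime}$ — shows $\MM_{1}^{\prime\prime}$ is the unique component, with $\MM_{1}^{\prime}$ contained in it. For the generic point: I would exhibit one explicit nontrivial extension $0\to\CO(-2)\to I_{C}\to B\to 0$ with $C$ a smooth quintic genus $2$ curve (such a curve lies on a unique quadric and its ideal sheaf has a resolution compatible with \eqref{B_SES}), which by semicontinuity forces the generic member of $\MM_{1}^{\prime\prime}$ to be such an ideal sheaf; since $\mathcal{H}^{0}_{5,2}$ is irreducible of dimension $20$, the rational map $\MM_{1}\dashrightarrow\Hilb^{5t-1}(\PB^{3})$ is birational onto $\overline{\mathcal{H}^{0}_{5,2}}$.

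Finally, for the singular locus statement, I would argue that $\MM_{1}$ is smooth away from $\MM_{1}^{\prime}$ by computing $\Ext^{1}(E,E)$ for $E\in\MM_{1}\setminus\MM_{1}^{\prime}$: for the ideal sheaf of a smooth quintic genus $2$ curve $C$ one has $\ext^{1}(I_{C},I_{C})=20=\dim\MM_{1}$ and the obstruction space $\Ext^{2}$ vanishes (or is killed by the trace map), so the moduli space is smooth there; whereas along $\MM_{1}^{\prime}$, using Proposition \ref{M1_1_wall} the tangent space jumps because $\Ext^{1}(\CO_{V}(-3),I_{C_{2}}(-1))=\CN^{8}$ contributes extra deformations not tangent to the stratum, producing a genuine singularity (this is where the transversality Lemma \ref{transverselem} enters, identifying a generic member of $\MM_{1}^{\prime}$ with the ideal sheaf of a plane cubic union a conic meeting it, which is a degenerate — non-Cohen–Macaulay or reducible — curve sitting in the closure of the smooth-curve locus). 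The main obstacle I anticipate is the identification of the stable-$B$ moduli with $\mathcal{K}_{4}(2,2)$ and the verification that the $\PB^{11}$-bundle structure globalizes over it — in particular checking that $\Ext^{1}(\CO(-2),B)=0$ holds uniformly so that the bundle has constant rank, and handling the strictly Gieseker-semistable (non-stable) locus of $B$, which has positive codimension and maps into $\MM_{1}^{\prime}\cup(\text{lower strata})$ — together with pinning down exactly which Ext classes along $\MM_{1}^{\prime}$ obstruct smoothness.
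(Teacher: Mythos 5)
Your overall strategy matches the paper's (analyze the $\langle\CO(-2),B\rangle$ side, identify the base of the $\PB^{11}$-bundle with $\mathcal{K}_{4}(2,2)$, then absorb $\MM_{1}^{\prime}$), but there is a genuine gap at the crucial step. You claim that $\MM_{1}=\MM_{1}^{\prime}\cup\MM_{1}^{\prime\prime}$ together with the dimension count $\dim\MM_{1}^{\prime}=18<20=\dim\MM_{1}^{\prime\prime}$ forces $\MM_{1}^{\prime}\subset\MM_{1}^{\prime\prime}$. This does not follow: a union of two closed irreducible sets of different dimensions can perfectly well have both as irreducible components (and this phenomenon actually occurs later in this very moduli problem, e.g.\ the $20$-dimensional $\MM_{3}^{\prime\prime\prime}$ sits alongside $21$-dimensional components of $\MM_{3}$). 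What is actually needed is a pointwise containment argument: given a nontrivial extension $E^{\prime}$ of $\CO_{V}(-3)$ by $I_{C_{2}}(-1)$, one composes $I_{V^{\prime}}(-1)\hookrightarrow I_{C_{2}}(-1)\hookrightarrow E^{\prime}$ (where $V^{\prime}$ is the plane spanned by $C_{2}$), identifies the cokernel as a sheaf $B^{\prime}$ which is an extension of $\CO_{V}(-3)$ by $\CO_{V^{\prime}}(-3)$ — hence a \emph{strictly semistable} object of class $(0,2,-7,\frac{37}{3})$ fitting into \eqref{B_SES} — and concludes via the snake lemma that $E^{\prime}$ is also an extension of $B^{\prime}$ by $\CO(-2)$. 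Without this diagram chase the claim $\MM_{1}\cong\MM_{1}^{\prime\prime}$ is unproved.

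The singular-locus statement has a related gap. Your argument that $\MM_{1}^{\prime}$ is singular because ``$\Ext^{1}(\CO_{V}(-3),I_{C_{2}}(-1))=\CN^{8}$ contributes extra deformations'' is a heuristic, not a proof, and you give no argument that \emph{all} singular points lie in $\MM_{1}^{\prime}$. The correct mechanism is that the $\PB^{11}$-bundle $\MM_{1}^{\prime\prime}\to\mathcal{K}_{4}(2,2)$ is singular exactly over the strictly semistable locus of the base; the containment argument above shows $\MM_{1}^{\prime}$ maps into that locus, and conversely any $E$ with $B$ strictly semistable admits (by comparing reduced Hilbert polynomials) a quotient $B\twoheadrightarrow\CO_{V}(-3)$, whose composition with $E\twoheadrightarrow B$ exhibits $E\in\MM_{1}^{\prime}$. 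Finally, your smoothness claim at stable $B$ (``$\Ext^{2}$ vanishes or is killed by the trace map'') needs an actual argument: the clean route is $\chi(E,E)=-19$ by Hirzebruch--Riemann--Roch, $\Ext^{3}(E,E)\cong\Hom(E,E(-4))^{\vee}=0$ by comparing tilt slopes, and $\ext^{1}(E,E)\leq 20$ from the four Ext groups of the destabilizing pair, which together force $\Ext^{2}(E,E)=0$.
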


\begin{proof}
Let $\MM_{1}^{\prime\prime}\subset\MM_{1}$ be the moduli space given by the extensions
$$
\xymatrix@C=0.5cm{
0 \ar[r] & \CO(-2) \ar[r] & E \ar[r] & B \ar[r] & 0,
}$$ 
where $B$ is a Gieseker semistable sheaf fitting into \eqref{B_SES}.
Note that the moduli space parameterizing $B$ in \eqref{B_SES} is isomorphic to $\mathcal{K}_{4}(2,2)$ and it is singular at $B$ strictly semistable (cf. the proof of \cite[Corollary 4.5]{Sch23}).
If $B$ is Gieseker stable, by \cite[Lemma 4.4]{GHS18}, we have
\begin{eqnarray}\label{M1-ext1-bound}
\ext^{1}(E,E)
&\leq& \ext^{1}(\CO(-2),\CO(-2))+\ext^{1}(B,\CO(-2))\nonumber\\
& + & \ext^{1}(\CO(-2),B)+\ext^{1}(B,B)-1 \nonumber\\
&=&20.  
\end{eqnarray}
By the Hizrebruch--Riemann--Roch theorem, we obtain $\chi(E,E)=-19$.
Using Serre duality and
comparing the tilt slopes of $E$ and $E(-4)$, we deduce $$\Ext^{3}(E,E)\cong\Hom(E,E(-4))=0.$$
It follows from \eqref{M1-ext1-bound} that $\Ext^{2}(E,E)=0$. Therefore, the moduli space $\MM_{1}^{\prime\prime}$ is smooth at $[E]$.
By Proposition \ref{M1_2_wall}, the moduli space $\mathcal{M}_{1}^{\prime\prime}$ is a $20$-dimensional $\mathbb{P}^{11}$-bundle over $\mathcal{K}_{4}(2,2)$, which is singular at $B$ strictly semistable.
Moreover, since every quintic genus $2$ curve is contained into a unique irreducible quadric surface (smooth or a quadric cone), so a generic element in $\MM_{1}^{\prime\prime}$ is the ideal sheaf of a quintic genus $2$ curve. 

Next, we claim $\MM_{1}\cong \MM_{1}^{\prime\prime}$. 
As a prior, we have $\MM_{1}=\MM_{1}^{\prime}\cup \MM_{1}^{\prime\prime}$.
Hence, it is sufficient to prove $\MM_{1}^{\prime}\subset \MM_{1}^{\prime\prime}$.
Suppose that $E^{\prime}\in \MM_{1}^{\prime}$ is represented by a non-trivial extension as \eqref{1st-wall-singular-SES}.
Let $V^{\prime}\subset \PB^{3}$ be the linear span of $C_{2}$. 
Since the composition of $I_{V^{\prime}}(-1)\rightarrow I_{C_{2}}(-1)$ and $I_{C_{2}}(-1)\rightarrow E^{\prime}$ is injective, by Theorem \ref{main-wall-in-Bridgeland-stability}, we have the short exact sequence of the second row in \eqref{M1-wall-commutative-diagram}
with $B^{\prime}$ fitting into \eqref{B_SES}.
As a result, we have a commutative diagram of  short exact sequences of the first two rows in \eqref{M1-wall-commutative-diagram}.
Then, by Snake Lemma, we obtain the following commutative diagram
\begin{equation}\label{M1-wall-commutative-diagram}
\xymatrix@C=0.5cm{
  I_{V^{\prime}}(-1) \ar[d]^{\textrm{id}} \ar[r] & I_{C_{2}}(-1) \ar[r] \ar[d] & \CO_{V^{\prime}}(-3)  \ar[d]  \\
 \CO(-2) \ar[r]  & E^{\prime} \ar[r] \ar[d] & B^{\prime}  \ar[d] \\
 & \CO_V(-3) \ar[r]^{\textrm{id}}  & \CO_V(-3). & 
}
\end{equation}
Hence, by definition, $E^{\prime}$ lies in $\MM_{1}^{\prime\prime}$.
This means $\MM_{1}^{\prime}\subset \MM_{1}^{\prime\prime}$ and thus $\MM_{1}\cong \MM_{1}^{\prime\prime}$.
Moreover, since the third column in \eqref{M1-wall-commutative-diagram} is a short exact sequence, so $B^{\prime}$ is strictly semistable.
As a result, we obtain that $\MM_{1}^{\prime}$ lies in the singular locus of $\MM_{1}$.

Now, we show that the singular locus of $\MM_{1}$ is indeed $\MM_{1}^{\prime}$.
Suppose that $E\in \mathcal{M}_{1}$ is a singular point defined by the non-trivial extension
\begin{equation}\label{1st-wall-SES-main}
\xymatrix@C=0.5cm{
0 \ar[r] & \CO(-2) \ar[r] & E \ar[r] & B \ar[r] & 0,
}   
\end{equation}
where $B$ is strictly Gieseker semistable.
We claim that
$B$ fits into a short exact sequence
\begin{equation}\label{1st-wall-B-SES-strictly}
\xymatrix@C=0.5cm{
0 \ar[r] & \CO_{V^{\prime}}(-3) \ar[r] & B \ar[r] & \CO_{V}(-3) \ar[r] & 0,
}   
\end{equation}
where $V$ and $V^{\prime}$ are two planes in $\PB^{3}$. 
In fact, since $B$ is strictly semistable,
there exists a short exact sequence of Gieseker semistable sheaves
\begin{equation*}\label{1st-wall-B-abstract-SES}
\xymatrix@C=0.5cm{
0 \ar[r] & F \ar[r] & B \ar[r] & G \ar[r] & 0
}  
\end{equation*}
such that their slopes in Gieseker stability are same.
Since $B$ is a torsion sheaf, so $F$ and $G$ are also torsion sheaves.
By comparing the reduced Hilbert polynomial of $F$ and $B$, we get 
$\ch(F)=(0,1,-\frac{7}{2},\frac{37}{6})=\ch(G)$. 
Note that $\ch(F(3))=\ch(G(3))=(0,1,-\frac{1}{2},\frac{1}{6})=\ch(\CO_{V^{\prime\prime}})$ for any plane $V^{\prime\prime}$.
It follows that $F\cong \CO_{V^{\prime}}(-3)$ and $G\cong \CO_{V}(-3)$, where $V$ and $V^{\prime}$ are planes.
Note that the composition of $E\rightarrow B$ in \eqref{1st-wall-SES-main} and $B \rightarrow \CO_{V}(-3)$ in \eqref{1st-wall-B-SES-strictly} is non-trivial. 
By Theorem \ref{main-wall-in-Bridgeland-stability}, we get $E\in \MM_{1}^{\prime}$.
As a consequence, $\MM_{1}^{\prime}$ is the singular locus of $\MM_{1}$.
\end{proof}

To determine which irreducible component survives in the large volume limit, we apply the same proof as that of \cite[Lemma 6.1]{Rez24a}.
Theorem \ref{main-wall-in-Bridgeland-stability} implies
the following:

\begin{lem}\label{wallcorssing-lem}
If the $0$-th cohomology object of a general object in an irreducible component created by a wall is an ideal sheaf, then this irreducible component (birationally) survives all the way up to the large volume limit.
\end{lem}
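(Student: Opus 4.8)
The plan is to prove that a general object $E$ of a component $\mathcal{N}$ created by a wall is a PT stable object. Granting this, $\mathcal{N}$ contains a dense open subscheme whose points are, as objects of $\DC(\PB^{3})$, exactly points of the large volume limit moduli space $\mathcal{M}_{4}\cong P_{-1}(\PB^{3},5[\ell])$ (Proposition \ref{Bri-mod=PT-mod}). Since the chamber of $\mathcal{N}$ on the path $\gamma$ is joined to the large volume limit by crossing the finitely many actual walls listed in Theorem \ref{main-wall-in-Bridgeland-stability}, and away from each actual wall the corresponding moduli stacks are canonically identified, this dense open subscheme is carried --- after the birational modifications recorded in Figure \ref{main-wall-crossing-picture} --- onto a dense open subscheme of an irreducible component of $\mathcal{M}_{4}$, which gives the asserted birational equivalence.

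To produce the PT stable object, I would work in the chamber of $\gamma$ containing $\mathcal{N}$: it lies on the right side of $\Gamma$ and is close to it, and we take $s\gg 1$, so that $\mu_{\alpha,\beta}(E)<0$ and $\ch_{1}^{\beta}(E)>0$ for any object $E$ of class $v$. For a general $[E]\in\mathcal{N}$, Corollary \ref{tilt-stab-Bri-stab-cor}(2) yields that $\CH^{0}_{\beta}(E)$ is $\sigma_{\alpha,\beta}$-semistable and that $\CH^{1}_{\beta}(E)$ is $0$ or a torsion sheaf $Q$ of $0$-dimensional support. By hypothesis $\CH^{0}_{\beta}(E)=I_{C}$ is an ideal sheaf; comparing Chern characters in the filtration of $E$ by its $\Coh^{\beta}$-cohomology objects, and using $\ch_{0}(I_{C})=1=\ch_{0}(E)$ together with $\ch_{0}(Q)=0$, forces $\ch_{0}(\CH^{-1}_{\beta}(E))=0$, whence $\CH^{-1}_{\beta}(E)=0$ since it is a torsion-free sheaf. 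Therefore $E$ fits into an exact triangle $I_{C}\to E\to Q[-1]\to I_{C}[1]$, whose long exact sequence of cohomology sheaves gives $H^{0}(E)\cong I_{C}$, $H^{1}(E)\cong Q$, and $H^{i}(E)=0$ otherwise.

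It remains to check the last two conditions of Lemma \ref{stab-pair-equ-def}. Since $\CO_{x}$ lies in $\Coh^{\alpha,\beta}(\PB^{3})$, is $\lambda_{\alpha,\beta,s}$-stable, and satisfies $\lambda_{\alpha,\beta,s}(\CO_{x})=+\infty$, any non-zero morphism $\CO_{x}\to E[1]$ would violate $\lambda_{\alpha,\beta,s}$-stability of $E[1]$; hence $\Hom(\CO_{x},E[1])=0$ for all $x$. Applying $\Hom(\CO_{x},-)$ to the shifted triangle $I_{C}[1]\to E[1]\to Q\to I_{C}[2]$ then gives $\Hom(\CO_{x},I_{C}[1])=0$, and comparing with the structure sequence $0\to I_{C}\to\CO_{\PB^{3}}\to\CO_{C}\to 0$ shows $\Hom(\CO_{x},\CO_{C})=0$ for all $x$, i.e.\ $C$ is a Cohen--Macaulay curve. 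By Lemma \ref{stab-pair-equ-def}, the general $E\in\mathcal{N}$ is a PT stable object, completing the key step.

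The main obstacle is exactly this upgrading of the single hypothesis ``$\CH^{0}_{\beta}(E)$ is an ideal sheaf'' to all of the conditions in Lemma \ref{stab-pair-equ-def}: one must descend from the tilted-heart cohomology $\CH^{\bullet}_{\beta}(E)$ to ordinary sheaf cohomology, keep the $0$-dimensional correction term $\CH^{1}_{\beta}(E)$ under control, and confirm that $I_{C}$ is really the ideal sheaf of a Cohen--Macaulay curve; the two-step triangle for $E$ and the vanishing $\Hom(\CO_{x},E[1])=0$ are what make this work. A secondary, more formal point is to see that the image of the PT-stable locus of $\mathcal{N}$ in $\mathcal{M}_{4}$ is a full irreducible component and not a proper subvariety, which follows because the chain of wall-crossing identifications provided by Theorem \ref{main-wall-in-Bridgeland-stability} preserves dimensions.
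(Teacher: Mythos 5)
The paper does not actually prove this lemma: it is stated as a consequence of Theorem \ref{main-wall-in-Bridgeland-stability} together with a citation of \cite[Lemma 6.1]{Rez24a}, so you are supplying an argument the authors omit. Your core step --- showing that a general object $E$ of the component is a PT stable object by verifying the four conditions of Lemma \ref{stab-pair-equ-def} --- is sound and is essentially a rerun of the paper's own proof of Proposition \ref{Bri-mod=PT-mod}; the deduction of $\Hom(\CO_{x},E[1])=0$ from $\lambda_{\alpha,\beta,s}$-stability and the Cohen--Macaulayness of $C$ are exactly right. Two small cleanups: the hypothesis concerns the sheaf cohomology $\CH^{0}(E)$, not the tilted cohomology $\CH^{0}_{\beta}(E)$ (they coincide here, but you should derive this from $\CH^{-1}(\CH^{0}_{\beta}(E))\in\mathcal{F}^{\beta}_{\sigma_H}$ being torsion-free of rank $0$, rather than assert it); and there is no $\CH^{-1}_{\beta}(E)$ to kill, since Corollary \ref{tilt-stab-Bri-stab-cor}(2) already places the $\Coh^{\beta}$-cohomology of $E$ in degrees $0$ and $1$.

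The genuine soft spot is the final step, where you must show the image of the PT-stable locus $U\subset\mathcal{N}$ is a \emph{full} irreducible component of $\mathcal{M}_{4}$ and not a proper subvariety of a larger one. ``The chain of identifications preserves dimensions'' does not rule this out: a $20$-dimensional injective image could a priori sit inside a $21$-dimensional component. The correct justification is openness of stability in families: the locus of objects that are simultaneously $\sigma_{i}$-stable and $\sigma_{4}$-stable is a common open subscheme of $\mathcal{M}_{i}$ and $\mathcal{M}_{4}$, hence $U$ is open in $\mathcal{M}_{4}$, and a nonempty irreducible open subset of $\mathcal{M}_{4}$ is dense in exactly one component. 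Note also that your argument only yields stability in the \emph{last} chamber, whereas the lemma is invoked in Propositions \ref{Modulisp2}, \ref{Modulisp3} and \ref{Modulisp4} to produce surviving components in each \emph{intermediate} moduli space $\mathcal{M}_{2},\mathcal{M}_{3}$; for that one still needs to know that the general object is not destabilized at each intermediate wall, which is where the explicit list of destabilizing pairs in Theorem \ref{main-wall-in-Bridgeland-stability} (and the closedness of the extension loci they define) enters --- this is the route taken in \cite[Lemma 6.1]{Rez24a}. So your proof establishes the statement as literally phrased (survival to the large volume limit), by a shortcut through PT stability, but it should be supplemented by the openness argument and, for the way the lemma is actually used, by the wall-by-wall non-destabilization check.
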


Let $\mathcal{M}_{2}$ be the Bridgeland moduli space for the third chamber after the second wall which is given by the destabilizing pair $\langle I_{\ell}(-1),i_{V_{\ast}}I_{Z_{1}/V}^{\vee}(-4) \rangle$. Let $\mathfrak{Fl}_{i}$ be the space parameterizing flag $Z_{i}\subset V\subset \PB^{3}$, where $V$ is a plane and $Z_{i}$ is a $0$-dimensional subscheme of length $i$.

\begin{prop}\label{Modulisp2}
The moduli space $\mathcal{M}_{2}$ has two irreducible components: $\widetilde{\mathcal{M}_{1}}$ and $\mathcal{M}_{2}^{\prime}$.
The component $\widetilde{\mathcal{M}_{1}}$ is birational to $\mathcal{M}_{1}$.
The component $\mathcal{M}_{2}^{\prime}$ is a $21$-dimensional  $\mathbb{P}^{12}$-bundle over $\mathrm{Gr}(2,4)\times \mathfrak{Fl}_{1}$, which generically parameterizes the union of a line and a plane quartic curve together with a choice of one point on the quartic.
\end{prop}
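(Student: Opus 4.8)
The plan is to cross the second wall $W(-\tfrac{9}{2},\tfrac{\sqrt{41}}{2})$, whose only destabilizing pair on the right side of $\Gamma$ is $\langle I_{\ell_0}(-1),\,i_{V_{\ast}}I_{Z_1/V}^{\vee}(-4)\rangle$ by Theorem \ref{main-wall-in-Bridgeland-stability}, and to run the usual wall-crossing bookkeeping. Comparing the slopes $\lambda_{\alpha,\beta,s}$ of $E$, of $I_{\ell_0}(-1)$ and of $i_{V_{\ast}}I_{Z_1/V}^{\vee}(-4)$ just inside and just outside the wall (exactly as in the proof of Proposition \ref{Modulisp1}, using Lemma \ref{tilt-stab-Bri-stab} and Corollary \ref{tilt-stab-Bri-stab-cor} to pass between tilt and Bridgeland (semi)stability), exactly one extension order survives on each side: on the $\MM_1$-side the destabilized objects are the non-split extensions $0\to i_{V_{\ast}}I_{Z_1/V}^{\vee}(-4)\to E\to I_{\ell_0}(-1)\to 0$, while on the $\MM_2$-side the newly created objects are the non-split extensions $0\to I_{\ell_0}(-1)\to E'\to i_{V_{\ast}}I_{Z_1/V}^{\vee}(-4)\to 0$. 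I would set $\MM_2'\subseteq\MM_2$ to be the closure of the locus of such $E'$, and $\widetilde{\MM_1}\subseteq\MM_2$ to be the closure of the locus of objects which are $\lambda_{\alpha,\beta,s}$-stable in the chambers of both $\MM_1$ and $\MM_2$; then $\MM_2=\widetilde{\MM_1}\cup\MM_2'$, so everything reduces to describing the two pieces and checking neither contains the other.

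For $\MM_2'$: the sheaves $I_{\ell_0}(-1)$ are parameterized by $\mathrm{Gr}(2,4)$ and the objects $i_{V_{\ast}}I_{Z_1/V}^{\vee}(-4)$ by the flag space $\mathfrak{Fl}_1$ (the datum being $Z_1\subset V$), so the base is $\mathrm{Gr}(2,4)\times\mathfrak{Fl}_1$, of dimension $4+5=9$. By Proposition \ref{M2_wall}(3), $\Ext^1(i_{V_{\ast}}I_{Z_1/V}^{\vee}(-4),I_{\ell_0}(-1))=\CN^{13}$ has constant dimension over this base, hence the relative $\Ext^1$-sheaf is locally free of rank $13$ and $\MM_2'$ is its projectivization; thus $\MM_2'$ is irreducible of dimension $9+12=21$. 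To identify the generic member, I would take the long exact sequence of $\beta$-cohomology objects of the defining triangle: since the cohomology sheaves of $i_{V_{\ast}}I_{Z_1/V}^{\vee}(-4)$ are $\mathcal{H}^{0}=i_{V_{\ast}}\CO_{V}(-4)$ and $\mathcal{H}^{1}=\CO_{Z_1}$, a general $E'$ has $\CH^{-1}_{\beta}(E')=0$ and $\CH^{0}_{\beta}(E')$ equal to the ideal sheaf of the union of the line $\ell_0$ with a plane quartic $C_4\subset V$, decorated by the point $Z_1$ as in the statement. A transversality argument of the same kind as Lemma \ref{transverselem}, together with Lemma \ref{k_points_P2}, should then give that, for a general point of the base, $\ell_0$ and $C_4$ are in general position and the object is $\lambda_{\alpha,\beta,s}$-stable (not merely semistable) for $s\gg1$ via Lemma \ref{tilt-stab-Bri-stab}. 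This yields the description in (2), shows a general member of $\MM_2'$ is not a quintic genus $2$ curve, and makes Lemma \ref{wallcorssing-lem} applicable later.

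For $\widetilde{\MM_1}$: the part of $\MM_1$ removed by the crossing is the destabilized locus $S_1$ of extensions $0\to i_{V_{\ast}}I_{Z_1/V}^{\vee}(-4)\to E\to I_{\ell_0}(-1)\to 0$, classified by $\Ext^1(I_{\ell_0}(-1),i_{V_{\ast}}I_{Z_1/V}^{\vee}(-4))$, which by Proposition \ref{M2_wall}(4) is $\CN$ on the open part and jumps to $\CN^2$ only over $\{Z_1\in\ell_0\}$; hence $\dim S_1\le 9<20=\dim\MM_1$. Thus $\MM_1\setminus S_1$ is dense open in $\MM_1$ and maps isomorphically onto a dense open subset of $\widetilde{\MM_1}$, so $\widetilde{\MM_1}$ is irreducible of dimension $20$ and birational to $\MM_1$ (Proposition \ref{Modulisp1}). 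For completeness of the list, any $\lambda_{\alpha,\beta,s}$-stable object $E$ in the chamber of $\MM_2$ is either already stable in the chamber of $\MM_1$, hence lies in $\widetilde{\MM_1}$, or becomes strictly $\sigma$-semistable precisely on the second wall, in which case its Jordan--Hölder factors are forced by Theorem \ref{main-wall-in-Bridgeland-stability} to be $I_{\ell_0}(-1)$ and $i_{V_{\ast}}I_{Z_1/V}^{\vee}(-4)$ (their Chern characters summing to $v$), so $E$ is one of the two extensions, and $\lambda$-stability in the chamber of $\MM_2$ excludes the $S_1$-direction and forces $E\in\MM_2'$. Since $\dim\widetilde{\MM_1}=20<21=\dim\MM_2'$ and a general point of $\widetilde{\MM_1}$ is not of the extension form defining $\MM_2'$, neither component contains the other, and $\MM_2=\widetilde{\MM_1}\cup\MM_2'$ is the decomposition into irreducible components.

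The step I expect to be the main obstacle is the identification of the generic object of $\MM_2'$: proving that for a general extension class the negative $\beta$-cohomology vanishes, that $\CH^{0}_{\beta}(E')$ is genuinely the ideal sheaf of the asserted $(\text{line})\cup(\text{plane quartic})$ configuration with the stated point datum, and that this object is $\lambda_{\alpha,\beta,s}$-stable for $s\gg1$. The bookkeeping that the list of components is complete is routine once Theorem \ref{main-wall-in-Bridgeland-stability} is in hand; disentangling the roles of $Z_1$ and of the plane quartic in a general extension, together with the accompanying genericity and transversality input, is where the real work lies.
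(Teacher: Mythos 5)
Your proposal is correct and follows essentially the same route as the paper: crossing the wall $\langle I_{\ell_0}(-1),\,i_{V_{\ast}}I_{Z_1/V}^{\vee}(-4)\rangle$, using Proposition \ref{M2_wall}(3) to exhibit $\MM_2'$ as a $\mathbb{P}^{12}$-bundle over $\mathrm{Gr}(2,4)\times\mathfrak{Fl}_1$ of dimension $21$, taking cohomology sheaves together with Lemma \ref{IZm-duality} and Lemma \ref{transverselem} to identify the generic object, and invoking Lemma \ref{wallcorssing-lem} for the survival of $\MM_1$. Your explicit dimension count of the destabilized locus via Proposition \ref{M2_wall}(4) is a slightly more hands-on substitute for the paper's direct appeal to Lemma \ref{wallcorssing-lem}, but the substance is the same.
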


\begin{proof}
By Proposition \ref{Modulisp1}, the $0$-th cohomology sheaf of a general object in $\mathcal{M}_{1}$ is an ideal sheaf.
By Lemma \ref{wallcorssing-lem}, crossing the second wall $\langle I_{\ell}(-1),i_{V_{\ast}}I_{Z_{1}/V}^{\vee}(-4) \rangle$, there exists an irreducible component $\widetilde{\MM_{1}}$ in $\MM_{2}$, which is birational to $\MM_{1}$.
Let $\MM_{2}^{\prime}\subset \MM_{2}$ be the moduli space parameterizing the non-trivial extensions
\begin{equation}\label{2nd-wall-ex-seq}
\xymatrix@C=0.5cm{
0 \ar[r] & I_{\ell}(-1) \ar[r] & E \ar[r] & i_{V_{\ast}}I_{Z_{1}/V}^{\vee}(-4) \ar[r] & 0.
}
\end{equation}
By Proposition \ref{M2_wall}, we know that $\mathcal{M}_{2}^{\prime}$ is a $\mathbb{P}^{12}$-bundle over $\mathrm{Gr}(2,4)\times \mathfrak{Fl}_{1}$. Since $\dim(\mathcal{M}_{2}^{\prime})=12+4+5=21>20=\dim(\widetilde{\mathcal{M}_{1}})$, so $\MM_{2}^{\prime}$ is an irreducible component of $\MM_{2}$.

Next, taking cohomology sheaves of \eqref{2nd-wall-ex-seq} and using Lemma \ref{IZm-duality}, we obtain $\CH^{1}(E)\cong \CH^{1}(i_{V_{\ast}}I_{Z_{1}/V}^{\vee}(-4))\cong \CO_{Z_{1}}$ and a short exact sequence
$$
\xymatrix@C=0.5cm{
0 \ar[r] & I_{\ell}(-1) \ar[r] & \CH^{0}(E) \ar[r] & \CO_{V}(-4) \ar[r] & 0.
}
$$
If $\ell\not\subset V$, by Lemma \ref{transverselem} to $D=\ell$, it follows that $\CH^{0}(E)$ is an ideal sheaf $I_{\ell\cup C_{4}}$, where $C_{4}\subset V$ is a plane quartic. 
Moreover, 
by Lemma \ref{wallcorssing-lem}, a generic object
$E$ must be a stable pair. Therefore,
$Z_{1}$ lies in the plane quartic curve $C_{4}$ but not in $\ell$.
\end{proof}

The following lemma shows that the irreducible component $\MM_{2}^{\prime}$ entirely crosses the third wall. 
Let $\mathcal{N}_{3}$ be the locus $E\in \Ext^{1}(\{\CO(-1)\xrightarrow{s}\CO_{\ell}\},\CO_{V}(-4))$.
By Proposition \ref{Modulisp2}, we have $\mathcal{N}_{3} \subset\widetilde{\MM_{1}}$.

\begin{lem}\label{cross_M2pr}
The locus $\mathcal{N}_{3}$  intersects $\MM_{2}^{\prime}$ empty.
\end{lem}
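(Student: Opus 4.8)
The plan is to assume for contradiction that some object $E$ lies in $\mathcal{N}_3\cap\MM_{2}^{\prime}$, so that $E$ is represented simultaneously by a non-split extension $0\to I_{\ell^{\prime}}(-1)\to E\to i_{V^{\prime}_{\ast}}I_{Z_1/V^{\prime}}^{\vee}(-4)\to 0$ (because $E\in\MM_{2}^{\prime}$) and by an extension $0\to\CO_V(-4)\to E\to\{\CO(-1)\xrightarrow{s}\CO_{\ell}\}\to 0$ (because $E\in\mathcal{N}_3$), and to extract a contradiction from the interaction of the two presentations.

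First I would force the geometric data attached to the two presentations to coincide. Composing the inclusion $\CO_V(-4)\hookrightarrow E$ with the quotient $E\twoheadrightarrow i_{V^{\prime}_{\ast}}I_{Z_1/V^{\prime}}^{\vee}(-4)$ and using $\Hom(\CO_V(-4),I_{\ell^{\prime}}(-1))=0$ (a torsion sheaf admits no map to a torsion-free one) shows $\Hom(\CO_V(-4),i_{V^{\prime}_{\ast}}I_{Z_1/V^{\prime}}^{\vee}(-4))\neq 0$; by adjunction this is $\Hom_{V^{\prime}}(i_{V^{\prime}}^{\ast}\CO_V(-4),I_{Z_1/V^{\prime}}^{\vee})$, which vanishes unless $V=V^{\prime}$. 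Dually, composing $I_{\ell^{\prime}}(-1)\hookrightarrow E$ with $E\twoheadrightarrow\{\CO(-1)\xrightarrow{s}\CO_{\ell}\}$, using $\Hom(I_{\ell^{\prime}}(-1),\CO_V(-4))=\Hom(I_{\ell^{\prime}},\CO_V(-3))=0$, and then reducing along the truncation triangle $I_{\ell}(-1)\to\{\CO(-1)\xrightarrow{s}\CO_{\ell}\}\to\CO_{Z(s)}[-1]$ (for which $\Ext^{\leq 0}(I_{\ell^{\prime}}(-1),\CO_{Z(s)}[-1])=0$), shows $\Hom(I_{\ell^{\prime}},I_{\ell})\neq 0$, hence $\ell^{\prime}=\ell$. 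Comparing first cohomology sheaves of $E$ then gives $Z_1=Z(s)$, and since $\mathcal{N}_3\neq\emptyset$ forces $Z(s)\in V$ by Proposition \ref{M3_wall}(4), this common point $p:=Z_1=Z(s)$ lies on $\ell$ and on $V$.

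Next, with $V=V^{\prime}$, $\ell=\ell^{\prime}$ and $Z_1=Z(s)=p\in\ell\cap V$, I would pin down the structure of $E$. Taking cohomology sheaves of the $\mathcal{N}_3$-sequence, and using the part of the analysis behind Proposition \ref{M3_wall} which shows the class in $\Ext^1(\{\CO(-1)\xrightarrow{s}\CO_{\ell}\},\CO_V(-4))$ restricts to zero on $\CH^0(\{\CO(-1)\xrightarrow{s}\CO_{\ell}\})=I_{\ell}(-1)$, one obtains $\CH^0(E)\cong\CO_V(-4)\oplus I_{\ell}(-1)$ and $\CH^1(E)\cong\CO_p$. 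The crucial input is then $\Ext^2(\CO_p,\CO_V(-4))\cong\CN$: the $\mathcal{N}_3$-class factors through the canonical map $\{\CO(-1)\xrightarrow{s}\CO_{\ell}\}\to\CO_p[-1]$, and the octahedral axiom applied to the composition $\{\CO(-1)\xrightarrow{s}\CO_{\ell}\}\to\CO_p[-1]\to\CO_V(-4)[1]$ realizes $E$ as an extension of the unique (up to isomorphism) non-split extension of $\CO_p[-1]$ by $\CO_V(-4)$ --- namely $i_{V_{\ast}}I_{p/V}^{\vee}(-4)$ --- by $I_{\ell}(-1)$. A slope comparison forces this latter extension to be trivial, so $E\cong I_{\ell}(-1)\oplus i_{V_{\ast}}I_{p/V}^{\vee}(-4)$; but throughout the chamber of $\MM_2$ the two summands have distinct $\lambda_{\alpha,\beta,s}$-slopes (they agree only on the second wall $\langle I_{\ell}(-1),i_{V_{\ast}}I_{Z_1/V}^{\vee}(-4)\rangle$), so such an $E$ cannot be $\lambda_{\alpha,\beta,s}$-semistable there, contradicting $E\in\MM_{2}^{\prime}\subset\MM_2$. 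This contradiction yields $\mathcal{N}_3\cap\MM_{2}^{\prime}=\emptyset$.

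The hardest part will be this last step: tracking the extension classes consistently through the two presentations of $E$ and carrying out the octahedral bookkeeping that forces the splitting. This is exactly where the precise $\Ext^1$- and $\Ext^2$-computations of Section \ref{Ext-on-walls} (Propositions \ref{M2_wall} and \ref{M3_wall}), together with the long exact sequences obtained by applying $\Hom(-,\CO_V(-4))$ and $\Hom(I_{\ell}(-1),-)$ to the triangle $I_{\ell}(-1)\to\{\CO(-1)\xrightarrow{s}\CO_{\ell}\}\to\CO_p[-1]$, do the real work.
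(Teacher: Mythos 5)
Your opening moves are fine and in places more careful than the paper: matching up the two presentations to force $V=V^{\prime}$, $\ell=\ell^{\prime}$ and $Z_{1}=Z(s)=:p$, and observing that the class in $\Ext^{1}(\{\CO(-1)\xrightarrow{s}\CO_{\ell}\},\CO_{V}(-4))$ dies in $\Ext^{1}(I_{\ell}(-1),\CO_{V}(-4))$ (because $\Ext^{2}(\CO_{p},\CO_{V}(-4))\to\Ext^{1}(\{\CO(-1)\xrightarrow{s}\CO_{\ell}\},\CO_{V}(-4))$ is injective, hence surjective when $p\in V$), so that $\CH^{0}(E)\cong\CO_{V}(-4)\oplus I_{\ell}(-1)$. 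The octahedral bookkeeping is also correct: it exhibits $E$ as an extension $0\to I_{\ell}(-1)\to E\to i_{V_{\ast}}I_{p/V}^{\vee}(-4)\to 0$.

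The gap is the sentence ``a slope comparison forces this latter extension to be trivial.'' No slope argument can force an extension class in $\Ext^{1}(i_{V_{\ast}}I_{p/V}^{\vee}(-4),I_{\ell}(-1))$ to vanish: non-split extensions of exactly this shape are, by definition, the objects of $\MM_{2}^{\prime}$, and they are $\lambda_{\alpha,\beta,s}$-stable throughout the chamber between the second and third walls (the two pieces have equal slope only \emph{on} the second wall; off it, the subobject $I_{\ell}(-1)$ has \emph{smaller} slope, which is compatible with stability, not with splitting). If your step were valid it would prove that $\MM_{2}^{\prime}=\emptyset$, contradicting Proposition \ref{Modulisp2}; worse, since after your first paragraph the octahedral construction uses only the $\mathcal{N}_{3}$-presentation, it would show every object of $\mathcal{N}_{3}$ is a direct sum, i.e.\ $\mathcal{N}_{3}\cap\MM_{2}=\emptyset$, which is not what is being claimed. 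So your final contradiction never materializes: what the octahedron hands you is precisely an $\MM_{2}^{\prime}$-shaped presentation of $E$, consistent with the hypothesis rather than contradicting it. The paper's proof extracts the contradiction differently: it uses the $\MM_{2}^{\prime}$-presentation to produce the \emph{opposite} filtration $0\to I_{\ell}(-1)\to\CH^{0}(E)\to\CO_{V}(-4)\to 0$ of the same sheaf, composes $I_{\ell}(-1)\hookrightarrow\CH^{0}(E)\twoheadrightarrow I_{\ell}(-1)$ to obtain a splitting, and then contradicts the tilt stability of $\CH^{0}(E)$. You compute the splitting of one filtration but never bring the second ($\MM_{2}^{\prime}$-side) filtration to bear, which is exactly the input your argument is missing.
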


\begin{proof}
Assume that there exists $E\in \Ext^{1}(\{\CO(-1)\xrightarrow{s}\CO_{\ell}\},\CO_{V}(-4))$ such that $E\in \MM_{2}^{\prime}$. 
Taking cohomology sheaves of 
$$
\xymatrix@C=0.5cm{
0 \ar[r] & \CO_{V}(-4) \ar[r] & E \ar[r] & \{\CO(-1)\xrightarrow{s}\CO_{\ell}\} \ar[r] & 0,
}
$$ we obtain $\CH^{1}(E)=\CO_{Z(s)}$ and $\CH^{0}(E)$ fits into the short exact sequence
\begin{equation}\label{M3-wall-H0E-SES-1}
\xymatrix@C=0.5cm{
0 \ar[r] & \CO_{V}(-4) \ar[r] & \CH^{0}(E) \ar[r] & I_{\ell}(-1) \ar[r] & 0.
}
\end{equation}
Next, taking cohomology sheaves of \eqref{2nd-wall-ex-seq},
we obtain $\CH^{1}(E)=\CO_{Z_{1}}=\CO_{Z(s)}$ and $\CH^{0}(E)$ fits into the short exact sequence 
\begin{equation}\label{M3-wall-H0E-SES-2}
\xymatrix@C=0.5cm{
0 \ar[r] & I_{\ell}(-1) \ar[r] & \CH^{0}(E) \ar[r] & \CO_{V}(-4) \ar[r] & 0.
}
\end{equation}
Since $\Hom(I_{\ell}(-1),I_{\ell}(-1))=\CN$,
by comparing \eqref{M3-wall-H0E-SES-1} and \eqref{M3-wall-H0E-SES-2}, the exact sequence \eqref{M3-wall-H0E-SES-1} splits contradicting the tilt stability of $\CH^{0}(E)$.
\end{proof}

Let $\mathcal{M}_{3}$ be the Bridgeland moduli space for the fourth chamber after the third wall which is given by the destabilizing pair $\langle \{\CO(-1)\xrightarrow{s}\CO_{\ell}\},\CO_{V}(-4) \rangle$. Let $\mathcal{U}$ be the universal line over $\mathrm{Gr}(2,4)$ and $\mathrm{Fl}(4)$ the flag manifold of $\CN^{4}$.

\begin{prop}\label{Modulisp3}
The moduli space $\mathcal{M}_{3}$ has five irreducible components: $\widetilde{\widetilde{\mathcal{M}_{1}}}$, ${\mathcal{M}_{2}^{\prime}}$, $\mathcal{M}_{3}^{\prime}$,
$\mathcal{M}_{3}^{\prime\prime}$
and
$\mathcal{M}_{3}^{\prime\prime\prime}$.
The first two are birational or isomorphic to their counterparts in $\mathcal{M}_{2}$ and the three new components:
\begin{enumerate}
\item $\mathcal{M}_{3}^{\prime}$ is a $21$-dimensional $\mathbb{P}^{14}$-bundle over $\mathrm{Gr}(2,4)\times (\PB^{3})^{\vee}$, which generically parameterizes the disjoint union of a line and a plane quartic;
\item $\mathcal{M}_{3}^{\prime\prime}$ is a $21$-dimensional $\mathbb{P}^{13}$-bundle over $\mathcal{U}\times(\PB^{3})^{\vee}$, which generically parameterizes the union of a line  and a plane quartic intersecting the line;
\item  $\mathcal{M}_{3}^{\prime\prime\prime}$ is a $20$-dimensional  $\mathbb{P}^{14}$-bundle over $\mathrm{Fl}(4)$, which generically parameterizes the union of a plane cubic with a thickening of a line in the plane.
\end{enumerate}

\end{prop}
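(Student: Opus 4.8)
The plan is to mimic the proofs of Propositions \ref{Modulisp1} and \ref{Modulisp2}. First I would determine which components of $\mathcal{M}_{2}$ persist after crossing the third wall $\langle\{\CO(-1)\xrightarrow{s}\CO_{\ell}\},\CO_{V}(-4)\rangle$. By Proposition \ref{Modulisp2} the $0$-th cohomology sheaf of a general object of $\widetilde{\mathcal{M}_{1}}$ is an ideal sheaf, so Lemma \ref{wallcorssing-lem} produces an irreducible component $\widetilde{\widetilde{\mathcal{M}_{1}}}\subset\mathcal{M}_{3}$ birational to $\mathcal{M}_{1}$; and since by Lemma \ref{cross_M2pr} the component $\mathcal{M}_{2}'$ is disjoint from the wall-crossing locus $\mathcal{N}_{3}\subset\widetilde{\mathcal{M}_{1}}$, it is untouched by this wall and reappears unchanged in $\mathcal{M}_{3}$ (a general object of $\mathcal{M}_{2}'$ being a stable pair, Lemma \ref{wallcorssing-lem} again shows it survives to the large volume limit).

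Next I would describe the strata created by the wall. The objects newly made semistable across it are the non-split extensions
\[
0\longrightarrow \{\CO(-1)\xrightarrow{s}\CO_{\ell}\}\longrightarrow E\longrightarrow \CO_{V}(-4)\longrightarrow 0,
\]
so they form a $\PB\bigl(\Ext^{1}(\CO_{V}(-4),\{\CO(-1)\xrightarrow{s}\CO_{\ell}\})\bigr)$-bundle over the space of triples $(\ell,V,Z(s))$ with $Z(s)\in\ell$. By Proposition \ref{M3_wall}(3) this $\Ext^{1}$ has dimension $15$ or $14$ according to the incidence of $\ell$, $V$ and $Z(s)$, which splits the parameter space into the three loci of the statement: when $\ell\not\subset V$ and $Z(s)=\ell\cap V$ the point $Z(s)$ is determined, so one gets a $\PB^{14}$-bundle $\mathcal{M}_{3}'$ over $\mathrm{Gr}(2,4)\times(\PB^{3})^{\vee}$ of dimension $14+4+3=21$; when $\ell\not\subset V$ and $Z(s)\neq\ell\cap V$ one gets a $\PB^{13}$-bundle $\mathcal{M}_{3}''$ over $\mathcal{U}\times(\PB^{3})^{\vee}$ of dimension $13+5+3=21$; and when $\ell\subset V$, recording the full flag $Z(s)\subset\ell\subset V$, one gets a $\PB^{14}$-bundle $\mathcal{M}_{3}'''$ over $\mathrm{Fl}(4)$ of dimension $14+6=20$. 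For each stratum I would check, exactly as in the proof of Proposition \ref{Bri-mod=PT-mod}, that a general $E$ is $\lambda_{\alpha,\beta,s}$-semistable just past the wall, via Corollary \ref{tilt-stab-Bri-stab-cor} (so $\CH^{-1}_{\beta}(E)=0$ and $\CH^{0}_{\beta}(E)$ is tilt-semistable) together with $\Hom(\CO_{x},E[1])=0$ for all $x$.

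To identify the generic object and confirm that each stratum is an honest component, I would take cohomology sheaves of the defining extension and feed the resulting sequence into the appendix lemmas (Lemma \ref{transverselem} on transverse unions and Lemma \ref{IZm-duality}). For $\ell\not\subset V$ this gives $\CH^{1}(E)\cong\CO_{Z(s)}$ (or $0$) and a sequence $0\to I_{\ell}(-1)\to\CH^{0}(E)\to\CO_{V}(-4)\to 0$ after absorbing the connecting map, so $\CH^{0}(E)=I_{\ell\cup C_{4}}$ with $C_{4}\subset V$ a plane quartic; a Chern-character count, $\ch(I_{\ell\cup C_{4}})=(1,0,-5,11)$ when $\ell\cap C_{4}=\emptyset$ versus $(1,0,-5,12)$ when $\ell$ meets $C_{4}$, then shows that $\mathcal{M}_{3}'$ records the disjoint union (with $E$ itself an ideal sheaf) and $\mathcal{M}_{3}''$ the union of a line meeting a plane quartic with one free point, matching the claimed descriptions. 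When $\ell\subset V$ the transverse lemma no longer applies and $\CH^{0}(E)$ instead becomes the ideal of a plane cubic together with a planar thickening of $\ell$. Each of $\mathcal{M}_{3}'$, $\mathcal{M}_{3}''$, $\mathcal{M}_{3}'''$ is closed in $\mathcal{M}_{3}$ (a projective bundle over a projective base) and irreducible; $\mathcal{M}_{3}'$ and $\mathcal{M}_{3}''$ have dimension $21>20=\dim\widetilde{\widetilde{\mathcal{M}_{1}}}$ and are pairwise distinct and distinct from $\mathcal{M}_{2}'$ by the geometry just described, so they are components. Exhaustiveness follows from the wall-and-chamber structure: every object of $\mathcal{M}_{3}$ is either already semistable in $\mathcal{M}_{2}$ (hence in $\widetilde{\widetilde{\mathcal{M}_{1}}}$ or $\mathcal{M}_{2}'$) or is destabilized there by exactly one of the right-hand wall pairs of Theorem \ref{main-wall-in-Bridgeland-stability}, placing it in one of the three new strata.

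The main obstacle is the $20$-dimensional stratum $\mathcal{M}_{3}'''$: one must show it is a genuine irreducible component and is not absorbed into the closure of one of the $21$-dimensional ones. I would handle this by pinning down the generic object's cohomology, its $\CH^{0}$ carrying a thickened-line summand, so that it is \emph{not} the ideal sheaf of a Cohen-Macaulay curve, and combining this with a tangent-space computation $\ext^{1}(E,E)=20$ (a refinement of the bound of \cite[Lemma 4.4]{GHS18}, using Proposition \ref{M3_wall} and $\Ext^{3}(E,E)\cong\Hom(E,E(-4))=0$ by comparing tilt slopes) to conclude that such $[E]$ is a smooth point of $\mathcal{M}_{3}$ lying on a unique $20$-dimensional component, which is therefore $\mathcal{M}_{3}'''$. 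A secondary difficulty is making the transverse-union analysis precise enough both to separate cases $(1)$ and $(2)$ and to confirm that a general member of $\mathcal{M}_{3}'$, $\mathcal{M}_{3}''$ and the generic part of $\mathcal{M}_{3}'''$ is a PT-stable object, so that Lemma \ref{wallcorssing-lem} guarantees these components persist to the large volume limit.
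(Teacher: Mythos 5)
Your overall architecture coincides with the paper's: $\widetilde{\widetilde{\mathcal{M}_{1}}}$ via Lemma \ref{wallcorssing-lem}, $\mathcal{M}_{2}^{\prime}$ via Lemma \ref{cross_M2pr}, and the three new strata read off from the case division in Proposition \ref{M3_wall}(3) with the same bases and dimension counts. Your Chern-character bookkeeping ($\ch_{3}=11$ versus $12$) is a reasonable variant of the paper's analysis of the connecting map $\varphi:\CO_{V}(-4)\to\CO_{p}$ for separating the disjoint-union case from the incident case, though you should still make explicit why $\CH^{1}(E)=0$ generically when $Z(s)=\ell\cap V$: the paper does this by exhibiting the $\varphi=0$ locus as the proper subspace $\Ext^{1}(\CO_{V}(-4),I_{\ell}(-1))\cong\CN^{14}$ inside $\Ext^{1}(B,A)\cong\CN^{15}$.

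The genuine gap is in your treatment of $\mathcal{M}_{3}^{\prime\prime\prime}$, in two respects. First, the identification of the generic object when $\ell\subset V$ is the hardest step of the proof and you only assert it. Taking cohomology sheaves tells you that a generic $E$ in this stratum sits in $0\to I_{\ell}(-1)\to E\to I_{p/V}(-4)\to 0$ with $\CH^{1}(E)=0$, but neither Lemma \ref{transverselem} nor a Chern-character count identifies $E$; the paper has to construct the double line $\mathbb{L}$ with prescribed thickening directions, realize $I_{\mathbb{L}\cup C_{3}}$ as a class in $\Ext^{1}(\CO_{V}(-4),\{\CO(-1)\xrightarrow{s}\CO_{\ell}\})$ via the octahedral axiom, and then argue genericity. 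Second, your claim that the generic object of $\mathcal{M}_{3}^{\prime\prime\prime}$ is \emph{not} the ideal sheaf of a Cohen--Macaulay curve is false: it is precisely $I_{\mathbb{L}\cup C_{3}}$, the ideal sheaf of a non-reduced but Cohen--Macaulay curve (pure one-dimensional, no embedded points). This is not a cosmetic slip — it contradicts your own final requirement that the generic member of $\mathcal{M}_{3}^{\prime\prime\prime}$ be PT-stable (Lemma \ref{stab-pair-equ-def}(2) forces $\CH^{0}$ to be the ideal of a Cohen--Macaulay curve), and it would destroy the survival of this component to the large volume limit via Lemma \ref{wallcorssing-lem} and hence its appearance as component (iv) of Corollary \ref{main-cor}. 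Finally, the tangent-space route you propose as a substitute is not obviously available: the bound used in Proposition \ref{Modulisp1} gives here only $\ext^{1}(E,E)\leq \ext^{1}(A,A)+\ext^{1}(B,A)+\ext^{1}(A,B)+\ext^{1}(B,B)-1=5+15+1+3-1=23$, so concluding $\ext^{1}(E,E)=20$ would require a genuine refinement that neither you nor the paper supplies.
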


\begin{proof}
By Lemma \ref{cross_M2pr}, the component $\MM_{2}^{\prime}$ is an irreducible component of $\MM_{3}$.
According to Proposition \ref{Modulisp2}, the $0$-th cohomology sheaf of a general object in $\widetilde{\MM_{1}}$ is an ideal sheaf.
By Lemma \ref{wallcorssing-lem}, crossing the third wall, it follows that there exists an irreducible component $\widetilde{\widetilde{\mathcal{M}_{1}}}$ in $\MM_{3}$, which is birational to $\widetilde{\MM_{1}}$.
Using Proposition \ref{M3_wall}, we obtain three new irreducible components $\MM_{3}^{\prime}$, $\MM_{3}^{\prime\prime}$ and $\MM_{3}^{\prime\prime\prime}$: a $\mathbb{P}^{14}$-bundle over $\mathrm{Gr}(2,4)\times (\PB^{3})^{\vee}$, a $\mathbb{P}^{13}$-bundle over $\mathcal{U}\times(\PB^{3})^{\vee}$ and a $\mathbb{P}^{14}$-bundle over $\mathrm{Fl}(4)$, respectively. 

Next, we will describe the general objects in $\MM_{3}^{\prime}$, $\MM_{3}^{\prime\prime}$ and $\MM_{3}^{\prime\prime\prime}$, respectively.
Denote $A:=\{\CO(-1)\xrightarrow{s}\CO_{\ell}\}$ and $B:=\CO_{V}(-4)$.
Note that $A$ fits into an exact triangle
\begin{equation}\label{A-eactseq}
\xymatrix@C=0.5cm{
I_{\ell}(-1) \ar[r] & A \ar[r] & \CO_{p}[-1] \ar[r] & I_{\ell}(-1)[1],
}
\end{equation}
where $p=Z(s)\in \ell$ is the zero locus of $s$. 
Since $\Ext^{2}(\CO_{V}(-4),I_{\ell}(-1))=0$,
applying $\Hom(B,-)$ to \eqref{A-eactseq},
we have a short exact sequence
\begin{equation}\label{AB-extseq}
\xymatrix@C=0.4cm{
0 \ar[r] & \Ext^{1}(\CO_{V}(-4),I_{\ell}(-1)) \ar[r] & \Ext^{1}(B,A) \ar[r]^{\psi\;\;\;\;\;\;\;} & \Hom(\CO_{V}(-4),\CO_{p}) \ar[r] & 0.
}    
\end{equation}
For any $E\in \Ext^{1}(B,A)$, we have a non-trivial extension
\begin{equation}\label{3nd-wall-ex-seq}
\xymatrix@C=0.5cm{
0 \ar[r] & A \ar[r] & E \ar[r] & B \ar[r] & 0.
}   
\end{equation}
To obtain the description of $\mathcal{H}^{0}(E)$ and $\mathcal{H}^{1}(E)$, it is sufficient to study the image of $\psi$ in \eqref{AB-extseq}.
Taking cohomology sheaves of \eqref{3nd-wall-ex-seq},
we obtain a long exact sequence of sheaves
\begin{equation}\label{E-extsq}
\xymatrix@C=0.5cm{
  0 \ar[r] & \mathcal{I}_{\ell}(-1) \ar[r]  & \mathcal{H}^0(E) \ar[r] & \CO_{V}(-4) \ar[r]^{\;\;\;\;\;\varphi} & \CO_{p} \ar[r] & \mathcal{H}^{1}(E) \ar[r] & 0.
}   
\end{equation}
In \eqref{E-extsq}, if $\varphi=0$, then $\mathcal{H}^{1}(E)=\CO_{p}$ and $\CH^{0}(E)$ fits into a short exact sequence 
\begin{equation}\label{H0E-extsq}
\xymatrix@C=0.5cm{
0 \ar[r] & I_{\ell}(-1) \ar[r] & \mathcal{H}^{0}(E) \ar[r] & \CO_{V}(-4) \ar[r] & 0.
}  
\end{equation}
If $\varphi\neq 0$, then $\varphi$ is surjective. Hence, $\CH^{1}(E)=0$ and $E$ fits into a short exact sequence
\begin{equation}\label{H0E-extsq2}
\xymatrix@C=0.5cm{
0 \ar[r] & I_{\ell}(-1) \ar[r] & E \ar[r] & I_{p/V}(-4) \ar[r] & 0.
}
\end{equation}
In the following, by Proposition \ref{M3_wall}, we will discuss three new components case-by-case:

{\bf Case (1): $\ell \not \subset V$ and $\ell \cap V =p$.}
In \eqref{E-extsq}, if $\varphi=0$, then $\mathcal{H}^{1}(E)=\CO_{p}$ and $\CH^{0}(E)$ fits into \eqref{H0E-extsq}.
By Grothendieck--Verdier duality and \cite[Lemma 4.1]{Rez24a}, we have
$$
\Ext^{1}(\CO_{V}(-4),I_{\ell}(-1))
\cong
\Ext^{1}(\CO_{V}(-4),i_{V}^{\ast}I_{\ell}[-1])
\cong \Hom(\CO_{V},I_{p}(4))=\CN^{14}.
$$
This corresponds to a closed subscheme which is a $\mathbb{P}^{13}$-bundle over $\mathrm{Gr}(2,4)\times (\PB^{3})^{\vee}$. 
Thus, in \eqref{E-extsq}, the morphism $\varphi$ is non-trivial for a general element $E\in \Ext^{1}(B,A)$.
This means
$\CH^{1}(E)=0$ and $E$ fits into \eqref{H0E-extsq2}.
By Grothendieck--Verdier duality and \cite[Lemma 4.1]{Rez24a}, we deduce
\begin{eqnarray*}
\Ext^{1}(I_{p/V}(-4),I_{\ell}(-1)) &\cong & \Ext^{1}(I_{p/V}(-4),i_{V}^{\ast} I_{\ell}[-1])\\
&\cong &  \Hom(I_{p/V},I_{p/V}(4))\cong H^{0}(\CO_{V}(4)).    
\end{eqnarray*}
It follows that for a general class, the sheaf $E$ is the ideal sheaf of the disjoint union of a line and a plane quartic.

{\bf Case (2): $\ell \not \subset V$ and $\ell\cap V \neq p$.} 
Since the zero locus $p \in \ell $ of $s$, so $p\not \in V$ and thus $\varphi=0$ in \eqref{E-extsq}. 
Hence, we get $\mathcal{H}^{1}(E)\cong \CO_{p}$ and $\mathcal{H}^{0}(E)$ fits into \eqref{H0E-extsq}.
By Lemma \ref{transverselem}, we know that $\mathcal{H}^{0}(E)$ is the ideal sheaf of the union of a line and a plane quartic. The extension class $\Ext^{1}(\CO_{V}(-4),I_{\ell}(-1))\cong H^{0}(I_{\ell\cap V}(4))$ implies that the plane quartic contains the intersection point $\ell\cap V$.

{\bf Case (3): $\ell\subset V$.}
In \eqref{E-extsq}, if $\varphi=0$, then $\mathcal{H}^{1}(E)=\CO_{p}$ and $\CH^{0}(E)$ fits into \eqref{H0E-extsq}.
By Grothendieck--Verdier duality and \cite[Lemma 4.1]{Rez24a}, we get 
\begin{eqnarray*}
\Ext^{1}(\CO_{V}(-4),I_{\ell}(-1)) &\cong & \Ext^{1}(\CO_{V}(-4),i_{V}^{\ast}I_{\ell}[-1])\\
& \cong & \Hom(\CO_{V},\CO_{V}(3)\oplus\CO_{\ell}(3))=\CN^{14}.
\end{eqnarray*}
This corresponds to a closed subscheme which is a $\mathbb{P}^{13}$-bundle over $\mathrm{Fl}(4)$.
Thus, the morphism $\varphi$ is non-trivial for the general class in $E\in \Ext^{1}(B,A)$. This means 
$\CH^{1}(E)=0$ and $E$ fits into \eqref{H0E-extsq2}.

Now let $\mathbb{L}$ be the double line obtained by thickening $\ell$, with tangent direction of infinitesimal thickening contained in the plane $V$ at $(\ell\cap C_{3})\cup p$, where $C_{3}$ is a plane cubic in $V$.
Since $\mathbb{L}\cup C_{3} \subset \mathbb{L}\cup V$, so we have a short exact sequence
$$
\xymatrix@C=0.5cm{
0 \ar[r] & I_{\ell}(-1) \ar[r] & I_{\mathbb{L}\cup C_{3}} \ar[r] & I_{p/V}(-4) \ar[r] & 0.
}
$$
Applying $\Hom(-,\CO_{V}(-4))$ to the above exact sequence, we obtain a non-trivial composition $\rho: E^{\prime}:=I_{\mathbb{L}\cup C_{3}}\twoheadrightarrow I_{p/V}(-4) \hookrightarrow \CO_{V}(-4)$.
Let $\mathrm{Cone}(\rho)$ be the cone of $E^{\prime} \xrightarrow{\rho} \CO_{V}(-4)$.
Then we have an exact triangle
\begin{equation}\label{Cone-rho-ext-tr}
\xymatrix@C=0.5cm{
\mathrm{Cone}(\rho)[-1] \ar[r] & E^{\prime} \ar[r] & \CO_{V}(-4).
}
\end{equation}
According to the octahedral axiom, we have the following commutative diagram of exact triangles
$$
\xymatrix@C=0.5cm{
I_{\mathbb{L}\cup C_{3}} \ar[r] \ar[d]^{id} & I_{p/V}(-4) \ar[r] \ar@{->}[d] & I_{\ell}(-1)[1]  \ar@{->}[d] \\
I_{\mathbb{L}\cup C_{3}} \ar[r]^{\rho}  & \CO_{V}(-4) \ar[r] \ar@{->}[d] & \mathrm{Cone}(\rho) \ar@{->}[d]
\\
& \CO_{p} \ar[r]^{id} & \CO_{p}. 
}
$$ 
Since $p\in \ell\subset V$, by Grothendieck--Verdier duality and \cite[Lemma 4.1]{Rez24a}, we get
$$
\Ext^{1}(\CO_{p}[-1],I_{\ell}(-1))
\cong \Ext^{1}(\CO_{p},\CO_{V}(-1)\oplus\CO_{\ell}(-1))=\CN.
$$
By taking cohomology sheaves of the last column,
it follows that $\CH^{0}(\mathrm{Cone}(\rho)[-1])\cong I_{\ell}(-1)$ and $\CH^{1}(\mathrm{Cone}(\rho)[-1])\cong \CO_{p}$.
By \eqref{Cone-rho-ext-tr}, we obtain $\Hom(\CO_{q},\mathrm{Cone}(\rho))=0$ for all points $q\in \PB^{3}$.
According to Lemma \ref{stab-pair-equ-def}, $\mathrm{Cone}(\rho)[-1]\cong \{\CO(-1)\xrightarrow{s}\CO_{\ell}\}:=A$.
As a consequence, from \eqref{Cone-rho-ext-tr}, $E^{\prime}$ is a class in $\Ext^{1}(B,A)$.
Hence, a generic class $E\in \Ext^{1}(B,A)$ is the ideal sheaf of a plane cubic with a thickening of the line.
\end{proof}

The following lemma demonstrates that only irreducible components $\MM_{3}^{\prime}$ and $\MM_{3}^{\prime\prime}$ entirely cross the fourth wall.

\begin{lem}\label{wallcross_M4}
Let $\mathcal{N}_{4}$ be the locus $E\in \Ext^{1}(\CO(-1),i_{V_{\ast}}I_{Z_{4}/V}^{\vee}(-5))$.
Then $\mathcal{N}_{4}$
intersects both $\MM_{2}^{\prime}$ and $\MM_{3}^{\prime\prime\prime}$ non-empty, 
and intersects both $\MM_{3}^{\prime}$ and $\MM_{3}^{\prime\prime}$ empty.
\end{lem}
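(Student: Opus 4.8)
The plan is to push every object $E\in\mathcal{N}_{4}$ through its cohomology sheaves and compare the result with the extension presentations of objects of $\MM_{2}'$, $\MM_{3}'$, $\MM_{3}''$ and $\MM_{3}'''$ supplied by Propositions \ref{Modulisp2} and \ref{Modulisp3}. By definition $E\in\mathcal{N}_{4}$ is a non-split extension $0\to i_{V_{\ast}}I_{Z_{4}/V}^{\vee}(-5)\to E\to\CO(-1)\to 0$, with $Z_{4}$ a length-$4$ subscheme of a plane $V$. Since $\CH^{0}(i_{V_{\ast}}I_{Z_{4}/V}^{\vee}(-5))\cong\CO_{V}(-5)$ and $\CH^{1}(i_{V_{\ast}}I_{Z_{4}/V}^{\vee}(-5))\cong\CO_{Z_{4}}$ by Lemma \ref{IZm-duality}, the long exact sequence of cohomology sheaves shows: the connecting map $\CO(-1)\to\CO_{Z_{4}}$ has image $\CO_{Z'}$ for a subscheme $Z'\subseteq Z_{4}$; the torsion subsheaf of $\CH^{0}(E)$ is isomorphic to $\CO_{V}(-5)$ with $\CH^{0}(E)/\CO_{V}(-5)\cong I_{Z'}(-1)$; and $\CH^{1}(E)$ is supported on $V$ with $\operatorname{length}(\CH^{1}(E))=4-\operatorname{length}(Z')$.

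For the emptiness of $\mathcal{N}_{4}\cap\MM_{3}'$ and $\mathcal{N}_{4}\cap\MM_{3}''$, I would take $E$ in one of these components and use its defining extension $0\to\{\CO(-1)\xrightarrow{s}\CO_{\ell}\}\to E\to\CO_{V_{0}}(-4)\to 0$ with $\ell\not\subset V_{0}$ (Propositions \ref{Modulisp3}, \ref{M3_wall}). Its cohomology sequence \eqref{E-extsq} gives $\operatorname{length}(\CH^{1}(E))\le 1$ and exhibits the torsion subsheaf of $\CH^{0}(E)$ as a subsheaf of a rank-one torsion-free sheaf $\mathcal{F}$ supported on the plane $V_{0}$. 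If $E$ also lies in $\mathcal{N}_{4}$, then comparing the two descriptions of this torsion subsheaf gives an injection $\CO_{V}(-5)\hookrightarrow i_{V_{0}\,\ast}\mathcal{F}$; by adjunction, together with the fact that $\CO_{V}(-5)$ restricts to a torsion sheaf on any plane $V_{0}\neq V$ (which then admits no nonzero map to the torsion-free $\mathcal{F}$), this forces $V=V_{0}$. Now $\CH^{0}(E)/\CO_{V}(-5)\cong I_{Z'}(-1)$, and the torsion-free subsheaf $I_{\ell}(-1)\hookrightarrow\CH^{0}(E)$ from the defining extension maps injectively into this quotient, so $I_{\ell}\subseteq I_{Z'}$, i.e. $Z'\subseteq\ell$. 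But $Z'\subseteq Z_{4}\subseteq V=V_{0}$ while $\ell\not\subset V_{0}$, so $\ell$ meets $V_{0}$ in a single reduced point and $\operatorname{length}(Z')\le 1$; since $\operatorname{length}(\CH^{1}(E))\le 1$ forces $\operatorname{length}(Z')\ge 3$, this is a contradiction. Hence both intersections are empty.

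For the non-emptiness of $\mathcal{N}_{4}\cap\MM_{2}'$ and $\mathcal{N}_{4}\cap\MM_{3}'''$, the point is that the obstruction just used disappears exactly when $\ell\subset V$, which is allowed on a stratum of $\MM_{2}'$ (its base $\mathrm{Gr}(2,4)\times\mathfrak{Fl}_{1}$ imposes no incidence between the line and the plane) and is built into $\MM_{3}'''$ (its base $\mathrm{Fl}(4)$ records a flag $p\subset\ell\subset V$). So I would fix a plane $V$, a line $\ell\subset V$, a length-$4$ subscheme $Z_{4}\subset\ell$ and a length-$1$ subscheme $Z_{1}\subset Z_{4}$, and build $E$ via a commutative $3\times 3$ diagram — in the style of the proof of Proposition \ref{Modulisp1}, or of the octahedral diagram in Case $(3)$ of the proof of Proposition \ref{Modulisp3} — whose rows realize the $\mathcal{N}_{4}$-extension and whose columns realize the $\MM_{2}'$-extension, respectively the $\MM_{3}'''$-extension. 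The relevant $\operatorname{Ext}^{1}$ groups are nonzero by Propositions \ref{M4_wall}, \ref{M2_wall} and \ref{M3_wall}, so compatible non-split extensions exist; one then verifies that the resulting $E$ is indeed a point of $\MM_{2}'$ (resp. $\MM_{3}'''$) by matching its cohomology sheaves with the shapes prescribed in Propositions \ref{Modulisp2} and \ref{Modulisp3}.

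The main obstacle is the non-emptiness part: one must actually exhibit objects in the two intersections, which means closing up the $3\times 3$ diagram, and this forces a careful choice of the extension classes so that the two presentations of $E$ are compatible and non-split on each side — arranging the nested incidences $Z_{1}\subset Z_{4}\subset\ell\subset V$ is the crux. The emptiness part is more routine once the torsion-subsheaf obstruction is isolated, but it still requires covering every stratum of $\MM_{3}'$ and $\MM_{3}''$, in particular the locus where the connecting map $\varphi$ in \eqref{E-extsq} vanishes (so that $\CH^{1}(E)\cong\CO_{p}\neq 0$) and the boundary locus $\ell\subset V_{0}$ of $\MM_{3}''$ (which is absorbed into the closure of $\MM_{3}'''$); this is controlled uniformly by the bound $\operatorname{length}(\CH^{1}(E))\le 1$ together with bookkeeping of which plane carries the torsion of $\CH^{0}(E)$.
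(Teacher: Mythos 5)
Your emptiness argument for $\mathcal{N}_{4}\cap\MM_{3}^{\prime}$ and $\mathcal{N}_{4}\cap\MM_{3}^{\prime\prime}$ is sound and is essentially the paper's: both compare the presentation of $\CH^{0}(E)$ coming from \eqref{under-4-th-wall} with the one coming from \eqref{E-extsq}, produce an injection of $I_{\ell}(-1)$ into the ideal sheaf of at least three of the four points of $Z_{4}$, and conclude that $\ell$ must lie in the plane, contradicting $\ell\not\subset V$ in the definitions of these two components. Your bookkeeping via the torsion subsheaf of $\CH^{0}(E)$ (which identifies the two planes) is a legitimate reorganization of the same contradiction, and your uniform bound $\mathrm{length}(\CH^{1}(E))\leq 1$ does cover both strata of \eqref{E-extsq}.

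The gap is in the non-emptiness half, and it is exactly the step you yourself flag as ``the crux.'' Knowing that $\Ext^{1}(\CO(-1),i_{V_{\ast}}I_{Z_{4}/V}^{\vee}(-5))\neq 0$ and that $\Ext^{1}(I_{\ell}(-1),i_{V_{\ast}}I_{Z_{1}/V}^{\vee}(-4))\neq 0$ does not produce a single object $E$ admitting both presentations: a priori the two extension spaces parameterize different objects, and ``closing up the $3\times 3$ diagram'' is precisely the assertion to be proved, not a consequence of the non-vanishing. The paper sidesteps the need to choose two compatible classes: it starts from an object $E\in\mathcal{N}_{4}$ for which $Z_{4}$ contains a collinear length-$3$ subscheme $Z_{3}\subset\ell\subset V$ and $\CH^{1}(E)\cong\CO_{q}$ for the remaining point $q$, so that \eqref{under-4-th-wall} yields $0\to\CO_{V}(-5)\to\CH^{0}(E)\to I_{Z_{3}}(-1)\to 0$, and then shows that the second presentation is \emph{forced}: since $\Ext^{i}(I_{\ell}(-1),\CO_{V}(-5))=0$ for $i=0,1$, one has $\Hom(I_{\ell}(-1),\CH^{0}(E))\cong\Hom(I_{\ell}(-1),I_{Z_{3}}(-1))\neq 0$, the Snake Lemma gives $0\to I_{\ell}(-1)\to\CH^{0}(E)\to Q\to 0$ with $Q$ an extension of $\CO_{\ell}(-4)$ by $\CO_{V}(-5)$, and because $\Ext^{1}(\CO_{\ell}(-4),\CO_{V}(-5))=\CN$ while the split case $Q\cong\CO_{V}(-5)\oplus\CO_{\ell}(-4)$ would violate the tilt stability of $\CH^{0}(E)$, one gets $Q\cong\CO_{V}(-4)$ — exactly the $\MM_{2}^{\prime}$ shape, and the $\MM_{3}^{\prime\prime\prime}$ shape when $q\in\ell$. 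If you want to keep your direction (constructing $E$ from chosen extension classes), you must either run this forcing argument or exhibit the compatible non-split classes explicitly; as written, the existence claim rests only on the non-vanishing of the individual $\Ext^{1}$ groups, which is not sufficient.
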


\begin{proof}
Note that any object $E\in\mathcal{N}_{4}$ is given by a non-trivial extension
$$
\xymatrix@C=0.5cm{
0 \ar[r] & i_{V_{\ast}}I_{Z_{4}/V}^{\vee}(-5) \ar[r] & E \ar[r] & \CO(-1) \ar[r] & 0.
}
$$ 
Taking cohomology sheaves, Lemma \ref{IZm-duality} implies the following exact sequence
\begin{equation}\label{under-4-th-wall}
\xymatrix@C=0.5cm{
0 \ar[r] & \CO_{V}(-5) \ar[r] & \CH^{0}(E) \ar[r] & \CO(-1) \ar[r]^{\,\,\,\psi} & \CO_{Z_{4}} \ar[r] & \CH^{1}(E) \ar[r] & 0.
}
\end{equation}

For the first statement, we assume in \eqref{under-4-th-wall} that there are three points
$Z_{3}\subset Z_{4}$ lying in a line $ \ell\subset V$ and $\CH^{1}(E)\cong \CO_{q}$, where $q\subset Z_{4}$ is another point.
Hence, we have a short exact sequence
\begin{equation}\label{under-4th-wall-special-case-SES}
\xymatrix@C=0.5cm{
0 \ar[r] & \CO_{V}(-5) \ar[r] & \CH^{0}(E) \ar[r] & I_{Z_{3}}(-1) \ar[r] & 0.
}   
\end{equation}
Since $\Ext^{i}(I_{\ell}(-1),\CO_{V}(-5))=0$ for $i=0,1$, applying $\Hom(I_{\ell}(-1),-)$ to \eqref{under-4th-wall-special-case-SES},
we get 
$\Hom(I_{\ell}(-1),\CH^{0}(E))\cong \Hom(I_{\ell}(-1),I_{Z_{3}}(-1))$.
Therefore, combining Snake Lemma, we obtain the following commutative  diagram of six short exact sequences of sheaves
$$
\xymatrix@C=0.5cm{
0 \ar[r] \ar[d] & I_{\ell}(-1) \ar[r] \ar[d] & I_{\ell}(-1) \ar[d] \\
\CO_{V}(-5) \ar[r] \ar[d] & \CH^{0}(E) \ar[r] \ar[d] & I_{Z_{3}}(-1) \ar[d]\\
\CO_{V}(-5) \ar[r] & Q \ar[r] & \CO_{\ell}(-4) 
}
$$
By direct computations, we have $\Ext^{1}(\CO_{\ell}(-4),\CO_{V}(-5))=\CN$. 
It follows that $Q\cong \CO_{V}(-4)$ or $Q\cong \CO_{V}(-5)\oplus \CO_{\ell}(-4)$.
Since $\CH^{0}(E)$ is tilt stable, so we derive $Q\cong \CO_{V}(-4)$.
By the definitions of $\MM_{3}^{\prime\prime\prime}$ (assuming $q\in \ell$) and $\MM_{2}^{\prime}$, 
we obtain the first statement.

For the second statement, assume that $\mathcal{N}_{4}$ intersects both $\MM_{3}^{\prime}$ and $\MM_{3}^{\prime\prime}$  non-empty.
Then, following the notation in the proof of Proposition \ref{Modulisp3},
we have two cases:
\begin{enumerate}
\item In \eqref{E-extsq}, the morphism $\varphi$ is surjective. 
This implies 
$\CH^{1}(E)=0$ and $
\ker(\varphi)\cong I_{p/V}(-4).
$
Hence, in \eqref{under-4-th-wall}, we have $\ker(\psi)\cong I_{Z_{4}}(-1)$.
By \eqref{under-4-th-wall}, we obtain an injective morphism $I_{\ell}(-1)\rightarrow I_{Z_{4}}(-1)$.
It follows that $Z_{4}\subset \ell \subset V$. 
\item In \eqref{E-extsq}, the morphism $\varphi=0$. 
This implies $\CH^{1}(E)\cong \CO_{p}$. 
Hence, by \eqref{under-4-th-wall}, we get an injective morphism $I_{\ell}(-1)\rightarrow I_{Z_{3}}(-1)$, where $Z_{3}=Z_{4}\setminus p$.
It follows that $Z_{3}\subset \ell$.
Thus, we get $Z_{4}\subset \ell\subset V$.
\end{enumerate}
In both cases, it contradicts the definitions of $\MM_{3}^{\prime}$ and $\MM_{3}^{\prime\prime}$.
This finishes the proof of the lemma.
\end{proof}

Let $\mathcal{M}_{4}$ be the Bridgeland moduli space for the fifth chamber after the fourth wall $\langle \CO(-1),i_{V_{\ast}}I_{Z_{4}/V}^{\vee}(-5) \rangle$.

\begin{prop}\label{Modulisp4}
The moduli space $\MM_{4}$ has six irreducible components: $\widetilde{\widetilde{\widetilde{\mathcal{M}_{1}}}}$,
$\widetilde{\mathcal{M}_{2}^{\prime}}$,
$\mathcal{M}_{3}^{\prime}$,
${\mathcal{M}_{3}^{\prime\prime}}$,
$\widetilde{\mathcal{M}_{3}^{\prime\prime\prime}}$
and $\mathcal{M}_{4}^{\prime}$. The first five are birational or isomorphic to their counterparts in $\mathcal{M}_{3}$. The last new component is a $27$-dimensional  $\mathbb{P}^{16}$-bundle over $\mathfrak{Fl}_{4}$, which generically parameterizes a plane quintic curve together with a choice of four points on it.
\end{prop}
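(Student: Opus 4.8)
The plan is to follow each of the five irreducible components of $\MM_{3}$ through the fourth wall $\langle \CO(-1),i_{V_{\ast}}I_{Z_{4}/V}^{\vee}(-5)\rangle$ and then to produce the single new component it creates. First I would apply Lemma \ref{wallcross_M4}: the destabilizing locus $\mathcal{N}_{4}$ meets neither $\MM_{3}^{\prime}$ nor $\MM_{3}^{\prime\prime}$, so these two components are untouched by the wall-crossing and pass to $\MM_{4}$ as isomorphic copies. For the other three, Propositions \ref{Modulisp1}, \ref{Modulisp2} and \ref{Modulisp3} show that the $0$-th cohomology sheaf of a general object of $\widetilde{\widetilde{\MM_{1}}}$, of $\MM_{2}^{\prime}$ and of $\MM_{3}^{\prime\prime\prime}$ is an ideal sheaf (a quintic genus $2$ curve; the union of a line with a plane quartic together with a point; the union of a plane cubic with a thickened line). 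Hence Lemma \ref{wallcorssing-lem} applies to each, so each survives birationally across the wall and gives the components $\widetilde{\widetilde{\widetilde{\MM_{1}}}}$, $\widetilde{\MM_{2}^{\prime}}$ and $\widetilde{\MM_{3}^{\prime\prime\prime}}$ of $\MM_{4}$. Since by Theorem \ref{main-wall-in-Bridgeland-stability} the fourth wall carries only the destabilizing pair $\langle \CO(-1),i_{V_{\ast}}I_{Z_{4}/V}^{\vee}(-5)\rangle$, this accounts for all components of $\MM_{4}$ inherited from $\MM_{3}$ and produces exactly one new one.

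Next I would realize the new component $\MM_{4}^{\prime}$ as the moduli space of non-trivial extensions
\[
0 \longrightarrow \CO(-1) \longrightarrow E \longrightarrow i_{V_{\ast}}I_{Z_{4}/V}^{\vee}(-5) \longrightarrow 0 .
\]
By Proposition \ref{M4_wall}(3) we have $\Ext^{1}(i_{V_{\ast}}I_{Z_{4}/V}^{\vee}(-5),\CO(-1))\cong\CN^{17}$ with no case distinction, so this group has constant rank; as $\CO(-1)$ is rigid and the objects $i_{V_{\ast}}I_{Z_{4}/V}^{\vee}(-5)$ are parameterized by the irreducible space $\mathfrak{Fl}_{4}$ of flags $Z_{4}\subset V\subset\PB^{3}$, which has dimension $3+8=11$, the space $\MM_{4}^{\prime}$ is an honest $\PB^{16}$-bundle over $\mathfrak{Fl}_{4}$, hence irreducible of dimension $16+11=27$. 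Because $27>21$ exceeds the dimension of every component of $\MM_{4}$ coming from $\MM_{3}$, the component $\MM_{4}^{\prime}$ cannot lie in the closure of any of those, and conversely none of those (all irreducible of strictly smaller dimension) can lie in the closure of $\MM_{4}^{\prime}$; therefore $\MM_{4}$ has exactly the six irreducible components in the statement.

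Finally I would describe a general member of $\MM_{4}^{\prime}$. Taking cohomology sheaves of the defining extension and using Lemma \ref{IZm-duality} yields $\CH^{1}(E)\cong\CO_{Z_{4}}$ together with a short exact sequence $0\to\CO(-1)\to\CH^{0}(E)\to\CO_{V}(-5)\to 0$. For a general extension class this is precisely the defining sequence $0\to\CO_{\PB^{3}}(-1)\to I_{C_{5}}\to I_{C_{5}/V}\to 0$ of a plane quintic $C_{5}\subset V$, so $\CH^{0}(E)\cong I_{C_{5}}$, and the $\PB^{16}$ of extension classes matches the linear system of plane quintics in $V$ containing $Z_{4}$; checking that a general such $E$ satisfies $\Hom(\CO_{x},E[1])=0$ for all $x\in\PB^{3}$ (so that, as predicted by Lemma \ref{wallcorssing-lem}, it is a PT stable object) then shows that $\MM_{4}^{\prime}$ generically parameterizes a plane quintic curve together with a choice of four points on it.

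The step I expect to be the main obstacle is the bookkeeping in the first paragraph: the components $\widetilde{\MM_{2}^{\prime}}$ and $\widetilde{\MM_{3}^{\prime\prime\prime}}$ genuinely interact with $\mathcal{N}_{4}$, and one must verify that the birational modification forced by the wall leaves each irreducible of the stated dimension, with no unexpected merging or splitting, by combining the ideal-sheaf criterion of Lemma \ref{wallcorssing-lem} with the explicit extension analysis of Proposition \ref{M4_wall}. The transversality input of the last paragraph — that a general extension $0\to\CO(-1)\to\CH^{0}(E)\to\CO_{V}(-5)\to 0$ is the ideal sheaf of a sufficiently general plane quintic through $Z_{4}$ — is routine given the transversality lemmas already invoked (e.g. Lemma \ref{transverselem}).
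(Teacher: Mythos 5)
Your proposal is correct and follows essentially the same route as the paper: Lemma \ref{wallcross_M4} for $\MM_{3}^{\prime}$ and $\MM_{3}^{\prime\prime}$, the ideal-sheaf criterion of Lemma \ref{wallcorssing-lem} for the three components that meet the wall, Proposition \ref{M4_wall} for the new $\mathbb{P}^{16}$-bundle over $\mathfrak{Fl}_{4}$, and Lemmas \ref{IZm-duality} and \ref{transverselem} (with $D=\emptyset$) to identify the generic object as the ideal sheaf of a plane quintic with four marked points. The extra dimension count ($27>21$) ruling out containment among components is a harmless addition not spelled out in the paper.
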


\begin{proof}
By Lemma \ref{wallcross_M4}, the components $\MM_{3}^{\prime}$ and $\MM_{3}^{\prime\prime}$ are irreducible components of $\MM_{4}$.
According to Proposition  $\ref{Modulisp3}$, 
the $0$-th cohomology sheaf of a general object is an ideal sheaf in $\widetilde{\widetilde{\mathcal{M}_{1}}}$ (resp. $\mathcal{M}_{2}^{\prime}$ or $\mathcal{M}_{3}^{\prime\prime\prime}$). 
By Lemma \ref{wallcorssing-lem}, crossing the fourth wall, it follows that there exist three irreducible components $\widetilde{\widetilde{\widetilde{\mathcal{M}_{1}}}}$,
${\widetilde{\mathcal{M}_{2}^{\prime}}}$, and
$\widetilde{\mathcal{M}_{3}^{\prime\prime\prime}}$ coming from $\mathcal{M}_{3}$, which are birational to $\widetilde{\widetilde{\mathcal{M}_{1}}}$,  $\mathcal{M}_{2}^{\prime}$ and $\mathcal{M}_{3}^{\prime\prime\prime}$, respectively.
By Lemma \ref{M4_wall}, there is a new irreducible component $\MM_{4}^{\prime}$ which is a $27$-dimensional $\mathbb{P}^{16}$-bundle over $\mathfrak{Fl}_{4}$.
Next, we will give a description of the generic objects in $\mathcal{M}_{4}^{\prime}$.
For any non-trivial extension $E\in \Ext^{1}(i_{V_{\ast}}I_{Z_{4}/V}^{\vee}(-5),\CO(-1))$, taking cohomology sheaves and using Lemma \ref{IZm-duality},
we obtain that $\CH^{0}(E)$ fits into the short exact sequence 
$$
\xymatrix@C=0.5cm{
0 \ar[r] & \CO(-1) \ar[r] & \CH^{0}(E) \ar[r] & \CO_{V}(-5) \ar[r] & 0
}
$$
and $\CH^{1}(E)\cong \CH^{1}(i_{V_{\ast}}I_{Z_{4}/V}^{\vee}(-5))\cong \CO_{Z_{4}}$. 
Applying Lemma \ref{transverselem} to $D=\emptyset$, we get $\CH^{0}(E)$ is an ideal sheaf of a plane quintic curve and $Z_{4}$ lies in the curve.
This completes the proof of Proposition \ref{Modulisp4}.
\end{proof}

Based on the above study of wall-crossings for the Bridgeland moduli spaces of semistable objects with class $v=(1,0,-5,11)$, 
we are now ready to finish the proofs of Theorem \ref{mainthm} and Corollary \ref{main-cor}.

\begin{proof}[Proof of Theorem \ref{mainthm}]
By Proposition \ref{Bri-mod=PT-mod}, the moduli space $P_{-1}(\PB^{3},5[\ell])$ is isomorphic to the Bridgeland moduli space $\MM_{4}$. 
As a result, Theorem \ref{mainthm} follows from Propositions \ref{Modulisp1}, \ref{Modulisp2}, \ref{Modulisp3} and \ref{Modulisp4}.
\end{proof}

\begin{proof}[Proof of Corollary \ref{main-cor}]
By Theorem \ref{mainthm}, there are six irreducible components of the moduli space of stable pairs for the quintic genus $2$ curves. 
Crossing the left branch $\Gamma$ of the hyperbola $\mu_{\alpha,\beta}(v)=0$, from the right side to the left side, the heart of $t$-structure changes from $\Coh^{\alpha,\beta}(\PB^{3})$ to $\Coh^{\alpha,\beta}(\PB^{3})[-1]$.
Let $I$ be the ideal sheaf of a generic curve in one of the components (i), (ii), (iii) and (iv) as in Corollary \ref{main-cor}.
Likewise to \cite[Lemma 9.2]{Rez24a}, as an actual wall, the left branch $\Gamma$ is given by $\langle I_{C_{i}},T_{i}[-1]\rangle$, where $C_{i}$ is a Cohen-Macaulay curve of degree $5$ and arithmetic genus $2+i$ with $1\leq i \leq 4$ and $T_{i}$ denotes a $0$-dimensional sheaf of length $i$.
For any generic curve in one of the components (i), (ii), (iii) and (iv), it is not contained in $C_{i}$.
It follows that $\Hom(I_{C_{i}},I)=0$.
Therefore, the above four components (birationally) survive after crossing $\Gamma$.
\end{proof}

%=====================================================================================================================

\appendix

\section{Some technical Lemmas}\label{three-tech-lem}

To compute Ext groups associated with the second, third and fourth walls, we need the following result.

\begin{lem}\label{k_points_P2}
Let $k\geq 2$ be an integer.
\begin{enumerate}
\item[(1)] 
If $Z_{k}\subset \PB^{2}$ is a $0$-dimensional subscheme of length $k$ and $i\geq 2k$, then $h^{0}(I_{Z_{k}}(i-3))=\frac{(i-1)(i-2)}{2}-k$.
\item[(2)] If $p\in \PB^{2}$ is a point, then
$h^{0}(I_{p}(k))=\frac{k(k+3)}{2}$
and
$h^{0}(I_{p}^{2}(k))=\frac{k(k+3)}{2}-2$.
\end{enumerate}
\end{lem}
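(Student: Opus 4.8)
The plan is to prove both statements by reducing to the well-known dimensions of cohomology groups of line bundles on $\PB^{2}$ via the natural short exact sequences defining the ideal sheaves, and then invoking vanishing of the first cohomology of the ideal sheaves in the stated range. The ambient cohomology values I will use are the standard ones: $h^{0}(\CO_{\PB^{2}}(m))=\binom{m+2}{2}=\frac{(m+1)(m+2)}{2}$ for $m\geq 0$ and $h^{1}(\CO_{\PB^{2}}(m))=0$ for all $m$.

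For part (1), I would start from the structure sequence
$$
\xymatrix@C=0.5cm{
0 \ar[r] & I_{Z_{k}}(i-3) \ar[r] & \CO_{\PB^{2}}(i-3) \ar[r] & \CO_{Z_{k}} \ar[r] & 0,
}
$$
using that $Z_{k}$ is $0$-dimensional of length $k$, so $h^{0}(\CO_{Z_{k}})=k$ and $h^{1}(\CO_{Z_{k}})=0$. Taking the long exact sequence in cohomology gives
$$
0 \to H^{0}(I_{Z_{k}}(i-3)) \to H^{0}(\CO_{\PB^{2}}(i-3)) \to \CN^{k} \to H^{1}(I_{Z_{k}}(i-3)) \to 0,
$$
so the claimed formula $h^{0}(I_{Z_{k}}(i-3))=\frac{(i-1)(i-2)}{2}-k$ (note $h^{0}(\CO_{\PB^{2}}(i-3))=\binom{i-1}{2}=\frac{(i-1)(i-2)}{2}$ when $i\geq 3$, which holds since $i\geq 2k\geq 4$) is equivalent to the surjectivity of the evaluation map $H^{0}(\CO_{\PB^{2}}(i-3))\twoheadrightarrow H^{0}(\CO_{Z_{k}})$, i.e. to $H^{1}(I_{Z_{k}}(i-3))=0$. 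This is exactly $(i-3)$-regularity of $I_{Z_{k}}$; since any $0$-dimensional subscheme of $\PB^{2}$ of length $k$ is $(k-1)$-regular (for instance by the Gotzmann/Castelnuovo--Mumford bound, or directly: a degree-$k-1$ curve through any $k-1$ of the points and missing the last separates points, giving the Castelnuovo argument), and $i-3\geq 2k-3\geq k-1$ whenever $k\geq 2$, the vanishing follows. I expect this regularity input to be the only genuine content, and I would cite a standard reference or give the one-line Castelnuovo separation-of-points argument.

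For part (2), with $p\in\PB^{2}$ a single point, the first formula is the $k=1$ case of the evaluation argument above: the sequence $0\to I_{p}(k)\to\CO_{\PB^{2}}(k)\to\CO_{p}\to 0$ with $H^{1}(\CO_{\PB^{2}}(k))=0$ for $k\geq 0$ gives $h^{0}(I_{p}(k))=\frac{(k+1)(k+2)}{2}-1=\frac{k(k+3)}{2}$. For $I_{p}^{2}$, the subscheme $V(I_{p}^{2})$ is the first infinitesimal neighborhood of $p$, a length-$3$ scheme, so from $0\to I_{p}^{2}(k)\to\CO_{\PB^{2}}(k)\to\CO_{V(I_{p}^{2})}\to 0$ with $h^{0}(\CO_{V(I_{p}^{2})})=3$ I get $h^{0}(I_{p}^{2}(k))=\frac{(k+1)(k+2)}{2}-3=\frac{k(k+3)}{2}-2$, provided the evaluation map is surjective, equivalently $H^{1}(I_{p}^{2}(k))=0$. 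This holds for $k\geq 1$ (indeed $I_{p}^{2}$ is $2$-regular, and even $1$-regular fails only in degree $0$): concretely, after choosing coordinates with $p=[0:0:1]$, the space of degree-$k$ forms vanishing to order $2$ at $p$ has the expected codimension $3$ since one can write down forms realizing each of the three jets $1, x, y$ at $p$ for $k\geq 1$. I would present this last surjectivity check as the minor technical point; it is the analogue of the separation argument in part (1) and is the only step requiring a word of justification.

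\begin{rem}
Throughout, the hypothesis $k\geq 2$ in part (1) together with $i\geq 2k$ is used precisely to guarantee $i-3\geq k-1$, which is the regularity bound ensuring $H^{1}(I_{Z_{k}}(i-3))=0$; in part (2) the analogous bound is automatic for the degrees $k$ that occur in the applications to the second, third and fourth walls.
\end{rem}
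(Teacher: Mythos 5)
Your proof is correct in substance but takes a genuinely different route from the paper. The paper reduces the key vanishing $H^{1}(I_{Z_{k}}(i-3))=0$ via Serre duality to $\Ext^{1}(I_{Z_{k}},\CO_{\PB^{2}}(-i))=0$ and proves the latter by a tilt-stability argument on $\PB^{2}$: it computes the numerical wall $W(I_{Z_{k}},\CO_{\PB^{2}}(-i)[1])$, shows $I_{Z_{k}}$ is tilt stable along it (splitting into the cases $i>2k$ and $i=2k$, and in the latter case ruling out the wall unless $Z_{k}$ is collinear, where a direct sequence computation finishes), and concludes by comparing tilt slopes. You instead invoke the classical Castelnuovo--Mumford regularity bound for $0$-dimensional subschemes of the plane, which is more elementary and shorter; the paper's route has the virtue of staying entirely within the stability-condition toolkit used throughout the rest of the paper, but your argument buys the same vanishing with standard Hilbert-function facts. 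Two small corrections to your write-up: the sharp classical bound is that $I_{Z_{k}}$ is $k$-regular, not $(k-1)$-regular --- for $k$ collinear points one has $H^{1}(I_{Z_{k}}(k-2))\neq 0$, so $(k-1)$-regularity fails --- but since $k$-regularity already gives $H^{1}(I_{Z_{k}}(d))=0$ for all $d\geq k-1$ and you only use the range $d=i-3\geq 2k-3\geq k-1$, your conclusion is unaffected. Also, your separation-of-points argument as stated applies to reduced $Z_{k}$; for arbitrary (possibly non-reduced) length-$k$ subschemes you should appeal to the general bound $\mathrm{reg}(I_{Z})\leq\deg Z$ (equivalently, the strict growth of the Hilbert function of a $0$-dimensional scheme until it reaches its degree), which you do mention in passing. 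Part (2) is fine as written and matches the required values.
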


\begin{proof}
We only prove the first statement, 
the proof of the second statement is the same.
For (1), we claim $\Ext^{1}(I_{Z_{k}},\CO_{\PB^{2}}(-i))=0$.
Then, by Serre duality, we have
$$
H^{1}(I_{Z_{k}}(i-3))\cong\Ext^{1}(I_{Z_{k}},\CO_{\PB^{2}}(-i))=0.
$$
Consider cohomology of the structure exact sequence  
$$
\xymatrix@C=0.5cm{
0 \ar[r] & I_{Z_{k}}(i-3) \ar[r] & \CO_{\PB^{2}}(i-3) \ar[r] & \CO_{Z_{k}} \ar[r] & 0.
}
$$
Since $h^{0}(\CO(i-3))=\frac{(i-1)(i-2)}{2}$ and $h^{0}(\CO_{Z_{k}})=k$, it follows that $h^{0}(I_{Z_{k}}(i-3))=\frac{(i-1)(i-2)}{2}-k$.

Now, we prove the claim. By direct computations, we get $\ch^{\beta}(I_{Z_{k}})=(1,-\beta,\frac{\beta^{2}}{2}-k)$ and $\ch^{\beta}(\CO_{\mathbb{P}^{2}}(-i)[1])=(-1,\beta+i,-\frac{\beta^2}{2}-\beta i-\frac{i^{2}}{2})$. 
Since $i\geq 2k\geq 4$, so the numerical wall  
$$
W(I_{Z_{k}},\CO_{\mathbb{P}^{2}}(-i)[1])=\{(\alpha,\beta)\in \RN_{>0} \times \RN| \alpha^{2}+(\beta+\frac{2k+i^{2}}{2i})^{2}=(\frac{2k-i^{2}}{2i})^{2} \}
$$
is non-trivial. We discuss two cases.

{\bf Case (1): $i>2k$}.
In this case, the numerical wall $W(I_{Z_{k}},\CO_{\mathbb{P}^{2}}(-i)[1])$ intersects with $\beta=-1$.
Since $\ch^{-1}_{1}(I_{Z_{k}})=1$, so $I_{Z_{k}}$ has no walls along $\beta=-1$. 
As $I_{Z_{k}}$ is a slope stable sheaf, we obtain that
$I_{Z_{k}}$ is tilt stable along $\beta =-1$ and thus tilt stable along $W(I_{Z_{k}},\CO_{\mathbb{P}^{2}}(-i)[1])$.
Note that $\CO_{\mathbb{P}^{2}}(-i)[1]$ is also tilt stable for $\beta\geq -i$.
Hence, by comparing the tilt slopes, we have $\Ext^{1}(I_{Z_{k}},\CO_{\mathbb{P}^{2}}(-i))\cong \Hom(I_{Z_{k}},\CO_{\mathbb{P}^{2}}(-i)[1])=0$.

{\bf Case (2): $i=2k$}.
In this case, the semicircle $W(I_{Z_{k}},\CO_{\mathbb{P}^{2}}(-2k)[1])$ tangents to $\beta=-1$ and it is the biggest possible wall for $I_{Z_{k}}$. The discussion is divided into two cases.

(i) If $Z_{k}$ lies in a line $\ell$, then there is a short exact sequence
\begin{equation}\label{Zk-in-line-ex-seq}
\xymatrix@C=0.5cm{
0 \ar[r] & \CO_{\PB^{2}}(-1) \ar[r] & I_{Z_{k}} \ar[r] & \CO_{\ell}(-k) \ar[r] & 0.
} 
\end{equation}
Applying $\Hom(\CO_{\PB^{2}}(-2k+3),-)$ to \eqref{Zk-in-line-ex-seq}, we get $\Ext^{1}(\CO_{\PB^{2}}(-2k+3),I_{Z_{k}})=0$. 
By Serre duality, it follows that $\Ext^{1}(I_{Z_{k}},\CO_{\PB^{2}}(-2k))\cong \Ext^{1}(\CO_{\PB^{2}}(-2k+3),I_{Z_{k}})=0$. 

(ii) If $Z_{k}$ does not lie in a line, then $W(I_{Z_{k}},\CO_{\mathbb{P}^{2}}(-2k)[1])$ is not an actual wall.
In fact, the semicircle $W(I_{Z_{k}},\CO_{\mathbb{P}^{2}}(-2k)[1])$ intersects with $\beta=-2$.
Assume that the numerical wall $W(I_{Z_{k}},\CO_{\mathbb{P}^{2}}(-2k)[1])$ is an actual wall and the destabilized sequence is the short exact sequence 
$$
\xymatrix@C=0.5cm{
0 \ar[r] & A \ar[r] & I_{Z_{k}} \ar[r] & B \ar[r] & 0
}
$$
with $\ch^{-2}(A)=(a,b,\frac{c}{2})$, where $a,b,c\in \ZN$. 
Note that $\ch_{1}^{-2}(I_{Z_{k}})=2$. 
It follows that $b=1$.
By changing the roles of $A$ and $B$, we can assume $a\geq 1$.
According to $\mu_{\sqrt{2k-2},-2}(A)=\mu_{\sqrt{2k-2},-2}(E)$, we deduce $c=a(2k-2)+3-2k\geq 1$. 
By $0\leq \Delta(A)\leq \Delta(I_{Z_{k}})$, we obtain $1-2k\leq ac \leq 1$. 
This implies $a=c=1$. 
Hence, we get
$\ch(A)=(1,-1,\frac{1}{2})$ and $A\cong \CO_{\PB^{2}}(-1)$. 
Thus, $Z_{k}$ must lie in a line, a contradiction.
As a result, $I_{Z_{k}}$ is tilt stable along $W(I_{Z_{k}},\CO_{\mathbb{P}^{2}}(-2k)[1])$. 
By comparing the tilt slopes, we conclude $\Ext^{1}(I_{Z_{k}},\CO_{\mathbb{P}^{2}}(-2k))\cong \Hom(I_{Z_{k}},\CO_{\mathbb{P}^{2}}(-2k)[1])=0$.
This completes the proof of the lemma.
\end{proof}

The following two lemmas are frequently used in the proof of Theorem \ref{mainthm}.

\begin{lem}\label{IZm-duality}
Let $i_{V}: V\hookrightarrow  \PB^{3}$ be a plane and $Z_{m}\subset V$ a $0$-dimensional subshceme of length
$m\geq 1$. 
Then, for any integer $k\geq 0$, we have $\CH^{0}(i_{V_{\ast}}I_{Z_{m}/V}^{\vee}(-k))\cong \CO_{V}(-k)$ and 
$\CH^{1}(i_{V_{\ast}}I_{Z_{m}/V}^{\vee}(-k))\cong \CO_{Z_{m}}$.
\end{lem}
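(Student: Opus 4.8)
The plan is to compute the derived dual $I_{Z_{m}/V}^{\vee}:=\RCH_{V}(I_{Z_{m}/V},\CO_{V})$ entirely on the smooth surface $V\cong\PB^{2}$, and then to push it forward along the closed immersion $i_{V}$; since $i_{V_{\ast}}$ is exact it commutes with cohomology objects, so $\CH^{j}(i_{V_{\ast}}G)\cong i_{V_{\ast}}\CH^{j}(G)$ for every $G\in\DC(V)$, and the whole computation reduces to one on $V$.

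First I would apply the contravariant functor $\RCH_{V}(-,\CO_{V})$ to the structure exact sequence $0\to I_{Z_{m}/V}\to\CO_{V}\to\CO_{Z_{m}}\to 0$ on $V$, producing the exact triangle
$$
\RCH_{V}(\CO_{Z_{m}},\CO_{V})\longrightarrow \CO_{V}\longrightarrow I_{Z_{m}/V}^{\vee}\longrightarrow \RCH_{V}(\CO_{Z_{m}},\CO_{V})[1].
$$
Because $Z_{m}$ is a $0$-dimensional subscheme of the smooth surface $V$, one has $\CExt^{i}_{V}(\CO_{Z_{m}},\CO_{V})=0$ for $i\neq 2$, so $\RCH_{V}(\CO_{Z_{m}},\CO_{V})\cong\CExt^{2}_{V}(\CO_{Z_{m}},\CO_{V})[-2]$, where $\CExt^{2}_{V}(\CO_{Z_{m}},\CO_{V})$ is a $0$-dimensional sheaf of length $m$ supported on $Z_{m}$, which—following the paper's convention—I again denote by $\CO_{Z_{m}}$. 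Passing to the long exact sequence of cohomology sheaves of the triangle, and using $\CExt^{\geq 3}_{V}(-,-)=0$ on the surface $V$, I obtain
$$
\CH^{0}(I_{Z_{m}/V}^{\vee})\cong\CO_{V},\qquad
\CH^{1}(I_{Z_{m}/V}^{\vee})\cong\CExt^{2}_{V}(\CO_{Z_{m}},\CO_{V})\cong\CO_{Z_{m}},\qquad
\CH^{i}(I_{Z_{m}/V}^{\vee})=0 \ \ (i\neq 0,1),
$$
where the first isomorphism uses $\CExt^{0}_{V}(\CO_{Z_{m}},\CO_{V})=\CExt^{1}_{V}(\CO_{Z_{m}},\CO_{V})=0$. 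Twisting by the line bundle $\CO_{V}(-k)$ leaves this unchanged, since $\CExt^{2}_{V}(\CO_{Z_{m}},\CO_{V})\otimes\CO_{V}(-k)\cong\CO_{Z_{m}}$ (the sheaf being $0$-dimensional), and applying the exact functor $i_{V_{\ast}}$ gives $\CH^{0}(i_{V_{\ast}}I_{Z_{m}/V}^{\vee}(-k))\cong\CO_{V}(-k)$ and $\CH^{1}(i_{V_{\ast}}I_{Z_{m}/V}^{\vee}(-k))\cong\CO_{Z_{m}}$ as sheaves on $\PB^{3}$, as desired.

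The only delicate point is the identification $\CExt^{2}_{V}(\CO_{Z_{m}},\CO_{V})\cong\CO_{Z_{m}}$. For a general $0$-dimensional scheme this holds only up to the non-canonical, length-preserving Matlis/Serre duality on the local Artinian rings at the points of $Z_{m}$, and is a genuine isomorphism when $Z_{m}$ is locally Gorenstein (e.g.\ reduced or curvilinear), which covers the $Z_{m}$ appearing in the wall-crossing arguments; in any event only the length $m$ and the support $Z_{m}$ of this sheaf enter the subsequent Ext-group computations.
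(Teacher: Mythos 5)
Your proof is correct and follows essentially the same route as the paper's: dualize the structure sequence of $Z_{m}\subset V$ on the surface $V$, identify $\RCH_{V}(\CO_{Z_{m}},\CO_{V})$ with a length-$m$ sheaf supported on $Z_{m}$ placed in degree $2$, then twist by $\CO_{V}(-k)$ and push forward along the exact functor $i_{V_{\ast}}$ before reading off cohomology sheaves. Your closing caveat—that $\CExt^{2}_{V}(\CO_{Z_{m}},\CO_{V})\cong\CO_{Z_{m}}$ genuinely requires $Z_{m}$ Gorenstein, and in general one only gets a length-preserving Matlis/Serre dual supported on $Z_{m}$—is a point the paper glosses over by simply invoking Grothendieck--Verdier duality, but as you note it is harmless since the later arguments only use the length and support of $\CH^{1}$.
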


\begin{proof}
Consider the exact triangle
$
\xymatrix@C=0.4cm{
I_{Z_{m}/V} \ar[r] & \CO_{V} \ar[r] & \CO_{Z_{m}} \ar[r] & I_{Z_{m}/V}[1]
}
$.
Applying $\RCH(-,\CO_{V})$,
we obtain an exact triangle
\begin{equation}\label{Zm-extsq}
\xymatrix@C=0.5cm{
\RCH(\CO_{Z_{m}},\CO_{V}) \ar[r] & \CO_{V} \ar[r] & I_{Z_{m}/V}^{\vee} \ar[r] & \RCH(\CO_{Z_{m}},\CO_{V})[1].}  
\end{equation}
By Grothendieck--Verdier duality, we obtain $\RCH(\CO_{Z_{m}},\CO_{V})\cong \CO_{Z_{m}}[-2]$.
Tensoring \eqref{Zm-extsq} by $\CO_{V}(-k)$ and applying $i_{V_{\ast}}$, we get
an exact triangle
$$
\xymatrix@C=0.5cm{
i_{V_{\ast}}\CO_{Z_{m}}[-2] \ar[r] & i_{V_{\ast}}\CO_{V}(-k) \ar[r] & i_{V_{\ast}}I_{Z_{m}/V}^{\vee}(-k) \ar[r] & i_{V_{\ast}}\CO_{Z_{m}}[-1]
}
$$
in $\DC(\PB^{3})$.
Then,
taking cohomology sheaves, the lemma holds.
\end{proof}

\begin{lem}[{\cite[Lemma 6.2]{Rez24a}}]\label{transverselem}
Let $D\subset \PB^{3}$ be a closed subscheme of dimension $\leq 1$, $V\subset \PB^{3}$ a plane, $k$ a positive integer and $\#(D\cap V)<+\infty$. 
If $E$ fits into a non-trivial extension
\begin{equation}\label{E-ideal}
\xymatrix@C=0.5cm{
0 \ar[r] & I_{D}(-1) \ar[r] & E \ar[r] & \CO_{V}(-k) \ar[r] & 0
},    
\end{equation}
then $E$ is an ideal sheaf of a curve $D\cup C_{k}$, where $C_{k}\subset V$ is a curve of degree $k$.
\end{lem}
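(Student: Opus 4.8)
The plan is to reduce to the case $D=\emptyset$ by a push-out, treat that case via the Koszul resolution of a plane curve, and then reconstruct $E$. Write $V=V(x)$ for a linear form $x\in H^{0}(\CO_{\PB^{3}}(1))$. Since $D$ has codimension $\ge 2$, the canonical inclusion $I_{D}(-1)\hookrightarrow\CO(-1)$ has cokernel $\CO_{D}(-1)$; pushing out the extension \eqref{E-ideal} along it produces a short exact sequence $0\to\CO(-1)\to E'\to\CO_{V}(-k)\to0$ together with a short exact sequence $0\to E\to E'\to\CO_{D}(-1)\to0$.

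The first key point is that the class of $E'$ — the image of the class of $E$ under the natural map $j\colon\Ext^{1}(\CO_{V}(-k),I_{D}(-1))\to\Ext^{1}(\CO_{V}(-k),\CO(-1))$ — is still non-zero. From the long exact sequence of $0\to I_{D}(-1)\to\CO(-1)\to\CO_{D}(-1)\to0$, the kernel of $j$ is $\Hom(\CO_{V}(-k),\CO_{D}(-1))$, and this group vanishes: a non-zero such map would be supported on the finite set $D\cap V$ and, stalk by stalk at $p\in D\cap V$, would factor through $(I_{D,p}:x)/I_{D,p}$, which is $0$ because $x$ is a non-zero-divisor on $\CO_{D,p}$ (using $\#(D\cap V)<\infty$, so that $V$ contains no component of $D$, and that $D$ has no embedded point on $V$). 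Next I would identify $E'$: Grothendieck--Verdier duality gives $\Ext^{1}(i_{V\ast}\CO_{V}(-k),\CO(-1))\cong\Hom_{V}(\CO_{V}(-k),\CO_{V})\cong H^{0}(\CO_{V}(k))$, so the class of $E'$ corresponds to a non-zero section $\overline{f}\in H^{0}(\CO_{V}(k))$. Lifting $\overline{f}$ to $f\in H^{0}(\CO_{\PB^{3}}(k))$ with $f\notin(x)$, the pair $x,f$ is a regular sequence, the plane curve $C_{k}:=V(x,f)\subset V$ has degree $k$, and its Koszul resolution $0\to\CO(-k-1)\to\CO(-1)\oplus\CO(-k)\to I_{C_{k}}\to0$ induces an extension $0\to\CO(-1)\to I_{C_{k}}\to\CO_{V}(-k)\to0$ with class $\overline{f}$ up to scalar. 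Since two extensions with proportional classes have isomorphic middle terms, $E'\cong I_{C_{k}}$.

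It then remains to descend back to $E$. From $0\to E\to E'=I_{C_{k}}\hookrightarrow\CO$ we get $E\hookrightarrow\CO$, so $E\cong I_{Z}$ for a closed subscheme $Z\subset\PB^{3}$; a Chern character computation gives $\ch_{0}(E)=1$, $\ch_{1}(E)=0$ and $\ch_{2}(E)=-(\deg D+k)$, so $Z$ is a curve of degree $\deg D+k$. The composite $I_{D}(-1)\hookrightarrow E\hookrightarrow E'=I_{C_{k}}$ factors as $I_{D}(-1)\hookrightarrow\CO(-1)\xrightarrow{\,\cdot x\,}I_{C_{k}}$, whose image in $\CO$ is $x\,I_{D}=I_{D\cup V}$; hence $I_{D\cup V}\subset I_{Z}$, i.e. $Z\subset D\cup V$. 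Combined with $Z\supset C_{k}$ and the exact sequence $0\to\CO_{D}(-1)\to\CO_{Z}\to\CO_{C_{k}}\to0$, one sees that $Z$ coincides with $D$ on $\PB^{3}\setminus V$ and with $C_{k}$ on $V\setminus(D\cap V)$, so $Z$ is a curve of the form $D\cup C_{k}$ with $C_{k}\subset V$ of degree $k$, as asserted.

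The step I expect to be the main obstacle is the vanishing $\Hom(\CO_{V}(-k),\CO_{D}(-1))=0$, equivalently the fact that the push-out does not split off $\CO(-1)$ — which, if it did, would let $E$ acquire two-dimensional torsion supported on $V$ and fail to be an ideal sheaf. This is precisely where the transversality hypothesis $\#(D\cap V)<\infty$ enters, together with the fact (used implicitly, and true in all our applications, where $D$ is a line, a conic, a plane cubic, a double line, or empty) that $D$ has no embedded point lying on $V$.
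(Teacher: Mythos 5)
Your proposal is correct in substance but takes a genuinely different route from the paper's. The paper works directly on $E$: assuming $E$ has torsion, it extracts a subsheaf $\CO_{V}(-i)$ with $i>k$, builds a $3\times 3$ diagram, and contradicts the non-splitting of \eqref{E-ideal} using the injectivity of $\Ext^{1}(\CO_{V}(-k),I_{D}(-1))\to\Ext^{1}(\CO_{V}(-i),I_{D}(-1))$, computed via Grothendieck--Verdier duality and $i_{V}^{\ast}I_{D}\cong I_{D\cap V/V}$; torsion-freeness established, a second diagram identifies the curve. You instead push out along $I_{D}(-1)\hookrightarrow\CO(-1)$, identify the push-out $E'$ with a complete-intersection ideal $I_{C_{k}}$ via the Koszul resolution, and descend. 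Your route is more constructive (it exhibits $C_{k}=V(x,f)$ explicitly), and the vanishing $\Hom(\CO_{V}(-k),\CO_{D}(-1))=0$ you isolate is precisely the transversality input the paper uses in the equivalent guise $i_{V}^{\ast}I_{D}\cong I_{D\cap V/V}$: both require $x$ to be a non-zero-divisor on $\CO_{D}$, which $\#(D\cap V)<\infty$ guarantees only when $D$ has no embedded or isolated point on $V$ --- you are right to flag this, and it holds in every application in the paper ($D$ empty or a Cohen--Macaulay curve meeting $V$ properly). One step deserves an extra line: at the points of $D\cap V$ your argument a priori only gives $D\cup C_{k}\subset Z\subset D\cup V$, leaving open possible embedded points of $Z$ there. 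This is excluded because the long exact sequence of $0\to I_{D}(-1)\to\CO(-1)\to\CO_{D}(-1)\to 0$ shows that the image of $\Ext^{1}(\CO_{V}(-k),I_{D}(-1))$ in $\Ext^{1}(\CO_{V}(-k),\CO(-1))\cong H^{0}(\CO_{V}(k))$ is $H^{0}(I_{D\cap V/V}(k))$; hence your $\overline{f}$ vanishes on $D\cap V$, so $D\cap C_{k}=D\cap V$ has length $\deg D$, giving $\chi(\CO_{D\cup C_{k}})=\chi(\CO_{D}(-1))+\chi(\CO_{C_{k}})=\chi(\CO_{Z})$ and therefore $Z=D\cup C_{k}$. (The paper's own proof is no more explicit at this final identification.)
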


\begin{proof}
Let $i_{V}:V\hookrightarrow \PB^{3}$ be the closed inclusion.
First, we claim $i_{V}^{\ast}I_{D}\cong I_{D\cap V/V}$. Since $D\cap V\subset D \subset \PB^{3}$, so we have the following short exact sequence 
\begin{equation}\label{DcapV-SES}
\xymatrix@C=0.5cm{
0 \ar[r] & I_{D} \ar[r] & I_{D\cap V} \ar[r] & I_{D\cap V/D} \ar[r] & 0.
}
\end{equation}
Note that $i_{V}^{\ast}I_{D\cap V/D}=0$ and $i_{V}^{\ast}I_{D\cap V}\cong I_{D\cap V/V}$.
Now applying $i_{V}^{\ast}$ to \eqref{DcapV-SES}, we obtain $i_{V}^{\ast}I_{D}\cong i_{V}^{\ast}I_{D\cap V}\cong I_{D\cap V/V}$.

Suppose now that $E$ is not a torsion free sheaf. 
Note that any subsheaf of $\CO_{V}(-k)$ contains a subsheaf of the form $\CO_{V}(-i)$ for some $k<i$.
Let $T$ be the maximal torsion sheaf of $E$. By Snake Lemma, $T$ is a subsheaf of $\CO_{V}(-k)$. Hence, $\CO_{V}(-i)$ is a subsheaf of $E$.
Let $F:=E/\CO_{V}(-i)$ be the quotient sheaf. 
Since $\Hom(\CO_{V}(-i),I_{D}(-1))=0$, the composition $f$ of the morphism $I_{D}(-1)\rightarrow E$ in \eqref{E-ideal} and the quotient map $E\rightarrow F$ is non-trivial. 
Since $\ker(f)=\CO_{V}(-i)\cap I_{D}(-1)$ in $E$, so $\ker(f)$ is trivial.
It follows that $f$ is injective.
By Snake Lemma, we have the following commutative diagram of four short exact sequences
\begin{equation}\label{Ck-commutative-diagram-2}
\xymatrix@C=0.5cm{
& \CO_{V}(-i) \ar[r]^{id} \ar@{^(->}[d] & \CO_{V}(-i) \ar@{^(->}[d] \\
I_{D}(-1) \ar[d]^{id} \ar[r] & E \ar@{->>}[d] \ar[r] & \CO_{V}(-k) \ar@{->>}[d] \\
 I_{D}(-1) \ar[r]^{\,\,\,\,\,\,\;f} & F \ar[r] & G .
}
\end{equation}
If $\Ext^{1}(G,I_{D}(-1))= 0$, then the middle horizontal sequence in \eqref{Ck-commutative-diagram-2} splits . 
Hence, we see $\Ext^{1}(G,I_{D}(-1))\neq 0$. Applying $\Hom(-,I_{D}(-1))$ to the last vertical sequence in \eqref{Ck-commutative-diagram-2}, we obtain an exact sequence
\begin{equation}\label{G_HomID}
\xymatrix@C=0.26cm{
0 \ar[r] & \Ext^{1}(G,I_{D}(-1)) \ar[r] & \Ext^{1}(\CO_{V}(-k),I_{D}(-1)) \ar[r] & \Ext^{1}(\CO_{V}(-i),I_{D}(-1)). }
\end{equation}
By Grothendieck--Verdier duality, we have 
$$\Ext^{1}(i_{V_{\ast}}\CO_{V}(-j),I_{D}(-1))\cong
\Hom(\CO_{V},I_{D\cap V/V}(j)).$$
Since $I_{D\cap V/V}(k)\hookrightarrow I_{D\cap V/V}(i)$ is injective for $k<i$, so
the third morphism is injective in \eqref{G_HomID}, a contradiction with $\Ext^{1}(G,I_{D}(-1))\neq 0$. Hence, $E$ is a torsion free sheaf and thus an ideal sheaf of a curve.

Set $E:= I_{C}$, where
 $C$ is a curve.
By Snake Lemma, we have the following commutative diagram of short exact sequences
$$
\xymatrix@C=0.5cm{
I_{D}(-1) \ar[r] \ar[d]^{id} & I_{C} \ar[r] \ar@{^(->}[d] & \CO_{V}(-k) \ar@{^(->}[d]\\
I_{D}(-1) \ar[r] & \CO \ar[r] \ar@{->>}[d] & \CO_{D\cup V^{\prime}} \ar@{->>}[d]\\
& \CO_{C} \ar[r]^{id} & \CO_{C}, 
}
$$
where $V^{\prime}\subset\PB^{3}$ is a plane. 
In the last column, we see $D\cup V^{\prime}=V\cup C$. Therefore, $V=V^{\prime}$ and $D\subset C$ and $C=D\cup (C\cap V)$. Denote $C_{k}:=C\cap V$. 
From \eqref{E-ideal}, we derive 
$\ch_{2}(I_{C})=-k+\ch_{2}(I_{D})$ and thus
$C_{k}\subset V$ is a plane curve of degree $k$.
\end{proof}

%===========================================================================================================================

\end{document}